\documentclass{amsart}
\usepackage{amsmath}
\usepackage{amsthm}
\usepackage{amssymb}
\usepackage{enumitem}
\usepackage{graphicx}
\usepackage[hidelinks]{hyperref}
\usepackage[capitalise]{cleveref}
\usepackage{mathtools}
\usepackage{nameref}
\usepackage[new]{old-arrows}
\usepackage{stmaryrd}
\usepackage{thmtools}
\usepackage[minimal]{yhmath}

\usepackage{tikz}
\usetikzlibrary{cd}
\usetikzlibrary{positioning, angles, quotes}
\newcommand{\nospacepunct}[1]{\makebox[0pt][l]{\,#1}} 

\theoremstyle{plain}
\newtheorem{thm}{Theorem}[section]      \newtheorem*{thm*}{Theorem}
\newtheorem{cor}[thm]{Corollary}        \newtheorem*{cor*}{Corollary}
\newtheorem{prop}[thm]{Proposition}     \newtheorem*{prop*}{Proposition}
\newtheorem{lem}[thm]{Lemma}            \newtheorem*{lem*}{Lemma}
\newtheorem{claim}[thm]{Claim}          \newtheorem*{claim*}{Claim}
        \newtheorem*{exer*}{Exercise}
\newtheorem{q}[thm]{Question}           \newtheorem*{q*}{Question}
      \newtheorem*{conj*}{Conjecture}

\theoremstyle{definition}
\newtheorem{defn}[thm]{Definition}      \newtheorem*{defn*}{Definition}
\newtheorem{ex}[thm]{Example}           \newtheorem*{ex*}{Example}
\newtheorem{notation}[thm]{Notation}    \newtheorem*{notation*}{Notation}
          \newtheorem*{setup*}{Setup}

\theoremstyle{remark}
\newtheorem{rem}[thm]{Remark}           \newtheorem*{rem*}{Remark}
\newtheorem{conv}[thm]{Convention}      \newtheorem*{conv*}{Convention}

\newtheoremstyle{iremark}
    {0.5em\topsep}   
    {0.5em\topsep}   
    {\upshape}  
    {0pt}       
    {\itshape}  
    {.}         
    {5pt plus 1pt minus 1pt} 
    {\thmname{#1}\thmnumber{ \itshape#2}\thmnote{ (#3)}} 
\theoremstyle{iremark}

\theoremstyle{plain}
\newenvironment{manualtheorem}[1]{\manualtheoreminner}{\endmanualtheoreminner}

\newenvironment{manualcor}[1]{\manualcorinner}{\endmanualcorinner}

\Crefname{claim}{Claim}{Claims}
\Crefname{defn}{Definition}{Definitions}
\Crefname{ex}{Example}{Examples}
\Crefname{prop}{Proposition}{Propositions}
\Crefname{thm}{Theorem}{Theorems}
\Crefname{manualtheoreminner}{Theorem}{Theorems}



\newcommand{\ses}[3]{1 \rightarrow {#1} \rightarrow {#2} \rightarrow {#3} \rightarrow 1}


\newcommand{\FP}{\mathrm{FP}}

\newcommand{\FTP}{\mathrm{FTP}}


\newcommand{\btwo}[1]{b_{#1}^{(2)}}

\newcommand{\Dk}[1]{\mathcal D_{k{#1}}}

\newcommand{\nov}[2]{\widehat{{#1}}^{#2}}
\newcommand{\cnov}[1]{\nov{#1}{\chi}}


\newcommand{\C}{\mathbb{C}}
\newcommand{\N}{\mathbb{N}}
\newcommand{\Q}{\mathbb{Q}}
\newcommand{\R}{\mathbb{R}}
\newcommand{\Z}{\mathbb{Z}}

\newcommand{\inv}{^{-1}}

\newcommand{\ab}{^{\mathsf{ab}}}


\DeclareMathOperator{\cd}{cd}

\DeclareMathOperator{\Ext}{Ext}
\DeclareMathOperator{\Fitt}{Fitt}

\let\H\relax
\DeclareMathOperator{\H}{H}
\DeclareMathOperator{\hd}{hd}
\DeclareMathOperator{\Hom}{Hom}

\DeclareMathOperator{\Ore}{Ore}
\DeclareMathOperator{\rk}{rk}

\DeclareMathOperator{\supp}{supp}
\DeclareMathOperator{\Tor}{Tor}





\newcommand{\ol}{\mathrel{\prec}}
\newcommand{\olob}{{\ol}} 


\makeatletter
\newsavebox{\@brx}
\newcommand{\llangle}[1][]{\savebox{\@brx}{\(\m@th{#1\langle}\)}%
  \mathopen{\copy\@brx\mkern2mu\kern-0.9\wd\@brx\usebox{\@brx}}}
\newcommand{\rrangle}[1][]{\savebox{\@brx}{\(\m@th{#1\rangle}\)}%
  \mathclose{\copy\@brx\mkern2mu\kern-0.9\wd\@brx\usebox{\@brx}}}
\makeatother

\newcounter{comments}

\title{Dimension drop in residual chains}
\author{Sam P.~Fisher}
\author{Kevin Klinge}

\address[S.~P.~Fisher]{Mathematical Institute, Andrew Wiles Building, Observatory Quarter, University of Oxford, Oxford, OX2 6GG, United Kingdom}
\email{fisher@maths.ox.ac.uk}

\address[K.~Klinge]{Faculty of Mathematics, Karlsruhe Institute of Technology, Englerstrasse 2, 76131 Karlsruhe, Germany}
\email{kevin.klinge@kit.edu}

\begin{document}

\begin{abstract}
    We give a description of the Linnell division ring of a countable residually (poly-\(\Z\) virtually nilpotent) (RPVN) group in terms of a generalised Novikov ring, and show that vanishing top-degree cohomology of a finite type group \(G\) with coefficients in this Novikov ring implies the existence of a normal subgroup \(N \leqslant G\) such that \(\cd_\Q(N) < \cd_\Q(G)\) and \(G/N\) is poly-\(\Z\) virtually nilpotent.
    
    As a consequence, we show that if \(G\) is an RPVN group of finite type, then its top-degree \(\ell^2\)-Betti number vanishes if and only if there is a poly-\(\Z\) virtually nilpotent quotient \(G/N\) such that \(\cd_\Q(N) < \cd_\Q(G)\). In particular, finitely generated RPVN groups of cohomological dimension \(2\) are virtually free-by-nilpotent if and only if their second \(\ell^2\)-Betti number vanishes, and therefore \(2\)-dimensional RPVN groups with vanishing second \(\ell^2\)-Betti number are coherent. As another application, we show that if \(G\) is a finitely generated parafree group with \(\cd(G) = 2\), then \(G\) satisfies the Parafree Conjecture if and only if the terms of its lower central series are eventually free. Note that the class of RPVN groups contains all finitely generated RFRS groups and all finitely generated residually torsion-free nilpotent groups.
\end{abstract}

\maketitle

\section{Introduction}

The focus of this article is the class of residually (poly-\(\Z\) and virtually nilpotent) groups (hereafter, RPVN groups) and the \(\ell^2\)-invariants of RPVN groups. This class contains many groups of interest. It was recently shown by Kielak, Okun, Schreve, and the first author that a finitely generated group is residually finite rationally solvable (RFRS) if and only if it is residually (poly-\(\Z\) and virtually Abelian) \cite[Theorem 6.3]{OkunSchreve_DawidSimplified}, so RFRS groups are RPVN. The class of RFRS groups was introduced by Agol in connection with Thurston's Virtual Fibring Conjecture for hyperbolic \(3\)-manifolds in \cite{AgolCritVirtFib}, where he showed that right-angled Artin groups are RFRS and that the property passes to subgroups. Thus, fundamental groups of special cube complexes (introduced by Haglund--Wise \cite{HaglundWise_special}) provide a rich source of examples of RFRS groups. The \(\ell^2\)-invariants of RFRS groups are now quite well understood thanks to the work of Kielak \cite{KielakRFRS}, who gave a novel description of the Linnell division ring \(\mathcal D_{\Q G}\) of a finitely generated RFRS group \(G\) and used this to show that \(G\) virtually algebraically fibres if and only if \(\btwo{1}(G) = 0\). This was extended to higher degrees in \cite{Fisher_Improved} and the description of \(\mathcal D_{\Q G}\) was further used in \cite{Fisher_freebyZ} to show that a finite type RFRS group admits a virtual map to \(\Z\) with kernel of lower cohomological dimension if and only if its top-degree \(\ell^2\)-Betti number vanishes.

In another direction, finitely generated torsion-free nilpotent groups are poly-\(\Z\), and therefore finitely generated residually torsion-free nilpotent (RTFN) groups are RPVN as well. Central to the theory of RTFN groups is Magnus's proof that free groups are RTFN \cite{Magnus_freegrpsRTFN}, though there are many more examples. Baumslag established several properties of RTFN and provided interesting examples of RTFN groups (see, for example, \cite{Baumslag_RTFN,Baumslag_reflectionsRTFN}), including many one-relator groups. Other examples of RTFN groups are pure braid groups \cite{FalkRandell_purebraidgroupsRTFN} and pure braid groups of closed orientable surfaces of positive genus \cite{BardakovBellingeri_purebraidgroupsurfaceRTFN}, and free \(\Q\)-groups \cite{JaikinZapirain_freeQgroups}.

Note that right-angled Artin groups are also RTFN \cite{DuchampKrob_RAAGsRTFN}, and therefore so are all special groups. Despite this large intersection between the classes of RFRS and RTFN groups, neither is included in the other. For instance, a non-Abelian torsion-free nilpotent group is trivially RTFN, but it is not RFRS. On the other hand, a poly-\(\Z\) virtually Abelian group is RFRS, but it is not RTFN unless it is Abelian. Both classes are closed under free products, so there are non-amenable examples of groups that are RFRS but not RTFN, and vice versa.

\subsection{The main results}

Our first result relates the vanishing of the top-degree \(\ell^2\)-Betti number of an RPVN group to the cohomological dimension of the terms in certain residual chains of normal subgroups.

\begin{manualtheorem}{A}\label{thm:A}
    Let \(G\) be an RPVN group of type \(\FP(\Q)\) and of \(\cd_\Q(G) = d\), and let \(G = N_0 \geqslant N_1 \geqslant \dots\) be a residual chain of normal subgroups such that every quotient \(G/N_i\) is poly-\(\Z\) and virtually nilpotent. Then \(\btwo{d}(G) = 0\) if and only if \(\cd_\Q(N_i) < d\) for \(i\) sufficiently large.
\end{manualtheorem}

\begin{rem}
    This result, and the rest of the results that follow in this introduction, have analogues in positive characteristic. The field \(\Q\) will be replaced by an arbitrary field \(k\), and the \(\ell^2\)-Betti numbers will be replaced by a suitable analogue over the field \(k\).

    Note also that one direction of \cref{thm:A} follows easily from \cite[Th\'eor\`eme 6.6]{Gaboriau2002}, which states that \(\btwo{n}(G) = 0\) whenever \(G\) fits into a short exact sequence \(\ses{N}{G}{Q}\) with \(Q\) infinite amenable and \(\btwo{n}(N) < \infty\).
\end{rem}

For RTFN groups, there is a canonical residual chain of the type appearing in \cref{thm:A}. Let \(G\) be a group, let \(\gamma_0(G) = G\), and inductively define \(\gamma_{i+1}(G) = [G,\gamma_i(G)]\). Each quotient \(G/\gamma_i(G)\) is a nilpotent group, and therefore its torsion elements form a normal subgroup. Hence,
\[
    \overline{\gamma}_i(G) = \{g \in G \ : \ g^n \in \gamma_i(G) \ \text{for some} \ n > 0\}
\]
is a normal subgroup of \(G\). Then \(G\) is RTFN if and only if \(\bigcap_{i \geqslant 0} \overline{\gamma}_i(G) = \{1\}\). Our main theorem takes the following form when restricting to RTFN groups.

\begin{manualcor}{B}\label{cor:B}
    Let \(G\) be an RTFN group of type \(\FP(\Q)\) and of \(\cd_\Q(G) = d\). Then the following are equivalent:
    \begin{enumerate}
        \item \(\btwo{d}(G) = 0\);
        \item \(\cd_\Q(\gamma_i(G)) < d\) for \(i\) sufficiently large;
        \item \(\cd_\Q(\overline{\gamma}_j(G)) < d\) for \(j\) sufficiently large.
    \end{enumerate}
\end{manualcor}

\begin{rem}
    In \cref{thm:A} and \cref{cor:B}, the assumptions that \(G\) be of type \(\FP(\Q)\) and of \(\cd_\Q(G) = d\) are unnecessarily strong. We will show that both results remain true under the weaker hypotheses that \(G\) is countable and there exists a projective resolution \(0 \rightarrow P_d \rightarrow \dots \rightarrow P_0 \rightarrow \Q \rightarrow 0\) of the trivial \(\Q G\)-module \(\Q\) such that \(P_d\) is finitely generated.
\end{rem}

Recall that a group is \emph{coherent} if all its finitely generated subgroups are finitely presented. Similarly, a ring is \emph{(left/right) coherent} if and only if all its (left/right) ideals are finitely presented. Note that group rings are left coherent if and only if they are right coherent. Wise has written that coherence is essentially a phenomenon among low-dimensional groups \cite[Section 3]{Wise_anInvitation}, and has conjectured that it should be equivalent to the vanishing of the second \(\ell^2\)-Betti number (see also \cite[Section 7]{JaikinLinton_coherence} and \cite[Question 1.1]{Fisher_freebyZ} for related questions and conjectures in this direction). The following result lends further evidence to the claim that two-dimensional groups with vanishing second \(\ell^2\)-Betti number are coherent.

\begin{manualcor}{C}\label{cor:C}
    Let \(G\) be a finitely generated RPVN group of \(\cd_\Q(G) \leqslant 2\). Then \(\btwo{2}(G) = 0\) if and only \(G\) is free-by-(poly-\(\Z\) and virtually nilpotent). 
    
    In particular, if \(G\) is RPVN with \(\cd_\Q(G) \leqslant 2\) and \(\btwo{2}(G) = 0\), then \(G\) is coherent and \(kG\) is left coherent for every field \(k\).
\end{manualcor}

To obtain the conclusions about coherence, we will use the recent results obtained by Jaikin-Zapirain and Linton in \cite{JaikinLinton_coherence} to prove that free-by-polycyclic groups of rational cohomological dimension \(2\) are coherent. This extends the well-known result of Feighn--Handel that free-by-cyclic groups are coherent \cite{FeighnHandel_FreeByZCoherent}.

\subsection{Parafree groups}

A residually nilpotent group \(G\) is \emph{parafree} if there is a free group \(F\) such that \(G/\gamma_n(G) \cong F/\gamma_n(F)\) for all \(n \geqslant 0\). The study of parafree groups was initiated by H.~Neumann, who asked whether such groups are necessarily free (see \cite[I.~Introductory Remarks]{Baumslag_parafreeSurvey}). This question was answered in the negative by Baumslag, who produced many examples of (finitely generated) non-free parafree groups \cite{Baumslag_NonFreeParafree}. Baumslag also formulated many interesting conjectures on parafree groups, the most famous being the \emph{Parafree Conjecture}, which predicts that \(\H_2(G;\Z) = 0\) for all finitely generated parafree groups \(G\). Recall that the \emph{free nilpotent group} of step \(n\) on a generating set \(S\) is \(F_S/\gamma_n(F_S)\), where \(F_S\) is the free group on \(S\). Free nilpotent groups are torsion-free, so if \(G\) is a finitely generated parafree group, then \(\gamma_n(G) = \overline{\gamma}_n(G)\) for all \(n\) and therefore \(G\) is residually torsion-free nilpotent.

\begin{manualcor}{D}\label{cor:D}
    Let \(G\) be a finitely generated parafree group of cohomological dimension \(2\). Then the Parafree Conjecture holds for \(G\) if and only if \(\gamma_n(G)\) is free for all sufficiently large \(n\). In particular, if the Parafree Conjecture holds for \(G\), then \(G\) is coherent.
\end{manualcor}

\begin{rem}
    The prediction that finitely generated parafree groups \(G\) are of cohomological dimension \(2\) and satisfy \(\H_2(G;\Z) = 0\) is sometimes referred to as the \emph{Strong Parafree Conjecture}--which Baumslag apparently also believed (see \cite[p.~640]{Cochran_LinkConcordance})--and is also completely open. It is also open whether finitely generated parafree groups are finitely presentable. Note, however, that Baumslag asked whether finite presentability was a pronilpotent invariant among RTFN groups \cite[Section 12, Problem 2]{Baumslag_parafreeSurvey}, which was disproved by Bridson--Reid in \cite{BridsonReid_BaumslagProblems}.
\end{rem}

We note that along the way to proving \cref{cor:D}, we obtain the following criterion for coherence among two-dimensional RPVN groups, which may be of independent interest. 

\begin{manualtheorem}{E}\label{thm:crit_H2}
    Let \(G\) be a finitely generated RPVN of \(\cd_\Q(G) \leqslant 2\). If \(\H_2(G;\Q) = 0\), then \(G\) is coherent, and in particular is finitely presented.
\end{manualtheorem}

If \(G\) was assumed to be of type \(\FP_2(\Q)\) in the above theorem, then the result would follow immediately from \cite[Corollary 1.6]{JaikinZapirain2020THEUO}. The novelty in \cref{thm:crit_H2} actually lies in establishing a bound \(b_2(G) \geqslant \btwo{2}(G)\) for RPVN groups in the absence of finiteness conditions.

\subsection{The Linnell division ring of an RPVN group}\label{subsec:LinnellRPVN}

A group \(G\) acts on the Hilbert space of infinite square summable series
\[
    \ell^2(G) = \left\{ \sum_{g \in G} \lambda_g g \ : \ \lambda_g \in \C, \ \sum_{g \in G} |\lambda_g|^2 < \infty \right\}
\]
by left multiplication. The algebra of bounded operators commuting with this \(G\)-action is denoted by \(\mathcal N(G)\) and called the \emph{von Neumann algebra} of \(G\). The non-(zero divisors) in \(\mathcal N(G)\) form an Ore set, and the Ore localisation \(\mathcal U(G) = \Ore(\mathcal N(G))\) is called the \emph{algebra of affiliated operators} of \(G\). Note that \(\mathcal U(G)\) naturally contains a copy of \(\Q G\); the object we are interested in is the division closure of \(\Q G\) inside \(\mathcal U(G)\), which is called the \emph{Linnell ring} of \(G\) (see \cite[Chapter 8]{Luck02} for more details on this material). If \(G\) is locally indicable, then the Linnell ring of \(G\) is known to be a division ring by \cite{JaikinLopezStrongAtiyah2020} and is denoted by \(\mathcal D_{\Q G}\). In particular, the Linnell ring of an RPVN group \(G\) is a division ring containing \(\Q G\).

Our main results will depend on a description of this division ring, which is inspired by the one Kielak gave for RFRS groups in \cite{KielakRFRS}. Instead of following Kielak, however, our proof draws from the more recent description of \(\mathcal D_{\Q G}\) given by Okun and Schreve for a RFRS group \(G\) in \cite{OkunSchreve_DawidSimplified}, which elegantly eliminates many of the technical difficulties encountered in Kielak's description. In \cite{KielakRFRS} and \cite{OkunSchreve_DawidSimplified}, the Linnell division ring \(\mathcal D_{\Q G}\) of a RFRS group \(G\) is described as a union of finite-dimensional extensions of Novikov rings of finite-index subgroups of \(G\), where the \emph{Novikov ring} \(\cnov{\Q \Gamma}\) of a group \(\Gamma\) and a character \(\chi \colon \Gamma \rightarrow \R\) is the ring of power series of \(\Gamma\) which tend to \(+\infty\) with respect to \(\chi\) (see \cref{def:novikov1}).

To obtain an analogous description of \(\mathcal D_{\Q G}\) for an RPVN group \(G\), we introduce the concept of a \emph{multicharacter} on a group \(\Gamma\) with a nilpotent quotient \(Q\). A character on a nilpotent group \(Q\) contains the derived subgroup in its kernel, so it will by necessity destroy much of the information contained in \(Q\). A multicharacter is by definition a finite set of characters, one for each successive quotient in a central series for \(Q\), which has the advantage of retaining the nilpotent structure of \(Q\). Associated to a multicharacter \(\chi\), we define a Novikov ring which we also denote by \(\cnov{\Q \Gamma}\) (see \cref{def:novikov_ring}), and obtain a similar description of \(\mathcal D_{\Q G}\) for an RPVN group \(G\) in terms of Novikov rings of its finite-index subgroups (see \cref{thm:divring_elements}). 

\begin{rem}
    If all of the characters in a multicharacter on \(Q\) are injective, then the multicharacter induces an order \(\ol\) on \(Q\), and the Novikov ring \(\cnov{\Q \Gamma}\) coincides with the Malcev--Neumann ring \(\Q N *_{\ol} Q\) of power series with well-ordered support (here \(N = \ker(\Gamma \rightarrow Q)\)).
\end{rem}

The description of \(\mathcal D_{\Q G}\) is then used to relate the \(\ell^2\)-homology of an RPVN group \(G\) with its Novikov cohomology, which we in turn show is connected to the cohomological dimension of conilpotent subgroups via the following result, which generalises \cite[Theorem 3.5]{Fisher_freebyZ}.

\begin{manualtheorem}{F}\label{thm:F}
    Let \(G\) be a group of type \(\FP(\Q)\) with \(\cd_\Q(G) = d\) and suppose that \(\chi = (\chi_1, \dots, \chi_n)\) is a multicharacter on a nilpotent quotient \(G/N\). If
    \[
        \H^d(G; \nov{\Q G}{(\pm\chi_1, \dots, \pm\chi_n)}) = 0
    \]
    for all choices of signs, then \(\cd_\Q(N) < \cd_\Q(G)\).
\end{manualtheorem}

\subsection{Acknowledgments}

The first author would like to thank his supervisor, Dawid Kielak, for the many discussions and comments on this work, as well as Marco Linton, Ismael Morales, and Pablo Sánchez-Peralta for enlightening conversations. The first author is supported by the National Science and Engineering Research Council (NSERC) [ref.~no.~567804-2022] and the European Research Council (ERC) under the European Union's Horizon 2020 research and innovation programme (Grant agreement No. 850930).
The second author is grateful to Claudio Llosa Isenrich and Roman Sauer for many helpful discussions.
The second author is funded by the German Research Foundation (Deutsche Forschungsgemeinschaft) via project 281869850 and RTG 2229 ``Asymptotic Invariants and Limits of Groups and Spaces''.

\section{Preliminaries}

\subsection{Orders on nilpotent groups}

We recall some basics on nilpotent groups that will be used throughout the article. A group \(Q\) is \emph{nilpotent} if there is an integer \(n\) and a series \(1 = Q_n \leqslant Q_{n-1} \leqslant \dots \leqslant Q_0 = Q\) of normal subgroups of \(Q\) such that \(Z(Q_{i-1}/Q_i) \leqslant Z(Q/Q_i)\) for each \(1 \leqslant i \leqslant n\), where
\[
    Z(\Gamma) = \{g \in \Gamma \ : \ gh = hg \ \text{for all} \ h \in \Gamma \}
\]
denote the \emph{centre} of a group \(\Gamma\). The quotient \(Q_{i-1}/Q_i\) are called the \emph{successive quotients} of the central series, and \(n\) is called the \emph{length} of the central series. The minimal length \(n\) of a central series for \(Q\) is called the \emph{nilpotency class} of \(Q\), and \(Q\) is then called a nilpotent group of \emph{step} \(n\). Thus, a group is nilpotent of step \(1\) if and only if it is Abelian.

Recall that a group \(Q\) is \emph{orderable} if it admits a total order \(\ol\) such that \(g \ol h\) if and only if \(tg \ol th\) if and only if \(gt \ol ht\). This is sometimes called \emph{bi-orderability}, but since we will not deal with left or right orders on groups, we prefer this terminology. If \(Q\) is torsion-free and nilpotent, then it is not hard to show that it has a central series with torsion-free (Abelian) successive quotients (in fact, the upper central series will do for this). The following lemma is well known; since it plays an important role in what follows, we include a proof for convenience

\begin{lem}
    Torsion-free nilpotent groups are orderable.
\end{lem}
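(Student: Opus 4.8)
The plan is to reduce the statement to two elementary facts and then induct along a central series. Recall, as just observed, that a torsion-free nilpotent group \(Q\) admits a central series \(1 = Q_n \leqslant Q_{n-1} \leqslant \dots \leqslant Q_0 = Q\) with torsion-free abelian successive quotients \(Q_{i-1}/Q_i\). Since \(Q_{i-1}/Q_i\) is central in \(Q/Q_i\) for each \(i\), in particular \(Q_{n-1} = Q_{n-1}/Q_n\) is torsion-free abelian and \emph{central} in \(Q = Q/Q_n\), while \(Q/Q_{n-1}\) carries the shorter central series \(Q_0/Q_{n-1} \geqslant \dots \geqslant Q_{n-1}/Q_{n-1} = 1\) with the same torsion-free abelian successive quotients. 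So it suffices to prove two things: (i) every torsion-free abelian group is orderable; and (ii) if \(1 \to A \to G \to B \to 1\) is exact with \(A\) central in \(G\) and both \(A\) and \(B\) orderable, then \(G\) is orderable. Granting (i) and (ii), an induction on \(n\) (with trivial base case \(Q = 1\)) applied to \(1 \to Q_{n-1} \to Q \to Q/Q_{n-1} \to 1\) yields the result.

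For (i), a torsion-free abelian group \(A\) embeds into the \(\Q\)-vector space \(A \otimes_\Z \Q\) via \(a \mapsto a \otimes 1\); a \(\Q\)-vector space is orderable by the lexicographic order on coordinates with respect to a well-ordered basis, and restricting that order to \(A\) gives what we want.

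Step (ii) is the heart of the matter. Fix orders \(<_A\) on \(A\) and \(<_B\) on \(B\) and write \(\pi \colon G \to B\) for the quotient map. Define \(g <_G h\) if either \(\pi(g) <_B \pi(h)\), or \(\pi(g) = \pi(h)\) and \(1 <_A g^{-1}h\) (note \(g^{-1}h \in A\) in the second case). That \(<_G\) is a total order is routine. Left-invariance follows from left-invariance of \(<_B\) together with the identity \((tg)^{-1}(th) = g^{-1}h\). For right-invariance the only case needing attention is \(\pi(g) = \pi(h)\): here \((gt)^{-1}(ht) = t^{-1}(g^{-1}h)t\), and since \(g^{-1}h\) lies in the \emph{central} subgroup \(A\) this equals \(g^{-1}h\), so \(gt <_G ht\). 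This is precisely the point at which centrality is used; it is exactly the vanishing of the obstruction to right-invariance of a lexicographic-type order built from a group extension.

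I expect the only real obstacle to be keeping the direction conventions in the definition of \(<_G\) consistent so that the left- and right-invariance checks both go through; beyond that bookkeeping there is no serious difficulty, and in particular no finiteness or countability hypothesis enters anywhere.
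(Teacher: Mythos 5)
Your proof is correct and takes essentially the same approach as the paper: both construct the lexicographic order on a central series with torsion-free abelian successive quotients. You spell out the inductive step (that a central extension of orderable groups is orderable) and the bi-invariance check more explicitly than the paper does, but the underlying argument is the same.
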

\begin{proof}
    Let \(Q\) be torsion-free nilpotent of step \(n\) and let \(1 = Q_n \leqslant \dots \leqslant Q_0 = Q\) be a central series with torsion-free successive quotients. Each quotient \(Q_{i+1}/Q_i\) is torsion-free Abelian, and hence orderable; this is because a finitely generated Abelian group embeds into \(\R\) and hence is orderable, and locally orderable groups are orderable. Let \(\ol_{i-1}\) be an order on \(Q_{i-1}/Q_i\). For nontrivial \(q \in Q\), declare \(1 \ol q\) if and only if \(1 \ol_{i-1} qQ_i\), where \(i\) is maximal such that \(q \in Q_{i-1}\). This defines an order on \(Q\). \qedhere
\end{proof}

Orders obtained as in the proof of the previous lemma are called \emph{lexicographic} with respect to the central series.

\begin{lem}[{\cite[p.~227]{BludovGlassRhem_centralConvex}}]\label{lem:all_orders_are_lex}
    Every order on a torsion-free nilpotent group \(Q\) is lexicographic with respect to some central series.
\end{lem}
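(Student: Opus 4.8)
The plan is to locate inside the chain of convex subgroups of $(Q,\prec)$ a finite central series, and then to observe that $\prec$ is automatically the lexicographic order associated to any central series whose terms are convex (with the induced orders on the successive quotients). Recall that $C\leqslant Q$ is \emph{convex} if $a\prec c\prec b$ with $a,b\in C$ forces $c\in C$; for a bi-order the convex subgroups form a chain under inclusion, a proper convex subgroup has infinite index (an element $g\succ1$ with $g^n\in C$ is squeezed between $1$ and $g^n$, hence lies in $C$), and conjugation by any $g\in Q$ is an order-automorphism, so it permutes convex subgroups and preserves inclusions among them. For the reduction ``convex $\Rightarrow$ lexicographic'': if $1=Q_n\leqslant\dots\leqslant Q_0=Q$ is a central series in which every $Q_i$ is convex (hence normal), each $Q_{i-1}/Q_i$ carries the order induced by $\prec$, and for $1\neq q\in Q$ with $i$ maximal such that $q\in Q_{i-1}$ (so $q\notin Q_i$), convexity of $Q_i$ gives $q\succ1\iff qQ_i\succ Q_i$ in $Q_{i-1}/Q_i$ (if $q\succ1$ but $qQ_i\prec Q_i$ then $q\prec c$ for all $c\in Q_i$, in particular $q\prec1$); by definition of the lexicographic order this is exactly $\prec\,=\,\prec_{\mathrm{lex}}$.

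So everything reduces to producing such a series, and I would start with the case that $Q$ is finitely generated, which is the one used in the applications. Then $Q$ is polycyclic, so every subgroup has finite Hirsch length, and since proper convex subgroups have infinite index the Hirsch length strictly increases along any chain of convex subgroups; hence the chain $1=C_0\subsetneq C_1\subsetneq\dots\subsetneq C_m=Q$ of all convex subgroups is finite. An order-automorphism permutes this finite chain preserving inclusions, so it fixes each $C_i$; thus every $C_i$ is normal in $Q$. Each jump $C_{i-1}\subsetneq C_i$ is minimal, so $C_i/C_{i-1}$ has no proper nontrivial convex subgroup (such a subgroup would pull back to a convex subgroup of $Q$ strictly between $C_{i-1}$ and $C_i$) and is therefore Archimedean (otherwise the elements bounded by the powers of a fixed positive one would form a proper nontrivial convex subgroup), hence order-embeds into $(\R,+)$ by H\"older's theorem. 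The conjugation action of $Q$ on $C_i/C_{i-1}$ is by order-automorphisms of a subgroup of $\R$, each of which is multiplication by a positive real, so it is a homomorphism $\lambda\colon Q\to\R_{>0}$; since $Q$ is nilpotent this action must be nilpotent, but (writing $C_i/C_{i-1}$ additively) the iterated commutator of $a$ with $k$ copies of $q$ equals $(\lambda(q)-1)^k a$, which vanishes for all $a$ only if $\lambda(q)=1$. Hence $[Q,C_i]\subseteq C_{i-1}$ for every $i$, the chain of convex subgroups is a central series, and the previous paragraph finishes this case.

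The only genuinely nontrivial point above is that nilpotence forces the convex jumps to be central, and I expect the analogue of this to be the main obstacle in the general case. When $Q$ is not finitely generated the chain of convex subgroups can be infinite, so one cannot simply take all of it; instead I would let $Q_i$ be the convex closure of $\gamma_i(Q)$ (the smallest convex subgroup containing it), which is convex by construction, normal because $\gamma_i(Q)$ is characteristic and conjugation permutes convex subgroups, and stabilises after finitely many steps since $\gamma_{c+1}(Q)=1$, where $c$ is the nilpotency class. Again one must show $[Q,Q_i]\subseteq Q_{i+1}$: in $Q/Q_{i+1}$ (ordered, as $Q_{i+1}$ is convex and normal) the image of $\gamma_i(Q)$ is central, and one has to check that its convex closure stays central --- i.e.\ that an element squeezed between the powers of a central element cannot fail to be central. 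This is the ``commutators are infinitesimal'' phenomenon in ordered nilpotent groups and is precisely the content of \cite[p.~227]{BludovGlassRhem_centralConvex}; the finitely generated argument is the model for it, but in the absence of a finite chain one replaces the H\"older-plus-Hirsch-length count by an inductive commutator estimate along the lower central series.
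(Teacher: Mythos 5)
The paper does not prove this lemma; it is stated with a citation to Bludov--Glass--Rhemtulla and used as a black box, so there is no in-paper proof to compare against. Your argument in the finitely generated case is complete and correct, and it is the standard one: convex subgroups of a bi-ordered group form a chain that descends to the successive quotients, a proper convex subgroup has infinite index so in a polycyclic group the chain is finite by Hirsch length, each jump \(C_i/C_{i-1}\) has no proper nontrivial convex subgroup and is therefore Archimedean and (by H\"older) an Abelian subgroup of \(\R\), conjugation acts through a homomorphism \(\lambda\colon Q\to\R_{>0}\), and the identity \([q,\ldots,q,\tilde a]\equiv(\lambda(q)-1)^k a \pmod{C_{i-1}}\) together with nilpotency of \(Q\) forces \(\lambda\equiv1\), making the chain of convex subgroups a central series. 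The reduction (``convex terms \(\Rightarrow\) lexicographic'') is also correct, though it can be said more directly: the induced order on \(Q_{i-1}/Q_i\) is defined by \(aQ_i\prec bQ_i\iff a\prec b\) when \(aQ_i\neq bQ_i\), so for \(q\notin Q_i\) one immediately has \(1\prec q\iff Q_i\prec qQ_i\). Two remarks worth making explicit: first, the finitely generated case already suffices for all the paper's uses of the lemma, since the nilpotent quotient \(Q=G_m/N_m\) appearing in \cref{prop:RPVN,thm:divring_elements} is a subquotient of the poly-\(\Z\) group \(G/N_m\) and hence polycyclic; second, your sketch for the general case (taking \(Q_i\) to be the convex closure of \(\gamma_i(Q)\) and showing the jumps remain central) is the right idea and is indeed essentially what the cited source establishes, but as written it is a pointer rather than a proof, and you are candid about that.
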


\subsection{Cohomological dimension and finiteness properties}\label{subsec:finprops}

Let \(G\) be a group and \(R\) be a nonzero unital ring. The \emph{cohomological dimension} of \(G\) over \(R\) is 
\[
    \cd_R(G) = \sup\{n : \H^n(G;M) \neq 0 \ \text{and} \ M \ \text{is an} \ RG\text{-module}\}.
\]
The group \(G\) is of \emph{type \(\FP_n(R)\)} if there is a projective resolution 
\[
    \cdots \rightarrow P_1 \rightarrow P_0 \rightarrow R \rightarrow 0
\]
of the trivial \(RG\)-module \(R\), where \(P_i\) is finitely generated for all \(i \leqslant n\). If all the modules \(P_i\) are finitely generated, then \(G\) is of \emph{type \(\FP_\infty(R)\)}. If, additionally, \(P_i = 0\) for all sufficiently large \(i\), then \(G\) is of type \(\FP(R)\). Note that \(G\) is finitely generated if and only if \(G\) is of type \(\FP_1(R)\) for some ring \(R\) \cite[Proposition 2.1]{BieriQueenMary}. If \(G\) is of type \(\FP_n(\Z)\) (resp.~\(\FP_\infty(\Z)\), resp.~\(\FP(\Z)\), then it is of type \(\FP_n(R)\) (resp.~\(\FP_\infty(R)\), resp.~\(\FP(R)\)) for all rings \(R\). Finally, if \(G\) admits a classifying space with finitely many cells of dimension at most \(n\) (resp.~cells in each dimension, resp.~cells), then \(G\) is of type \(\FP_n(R)\), (resp.~\(\FP_\infty(R)\), resp.~\(\FP(R)\)) for all rings \(R\).

We will often be interested in groups that have the following finiteness property. We say that a group \(G\) is of \emph{type \(\FTP_n(R)\)} if the trivial \(RG\)-module \(R\) admits a projective resolution of the form
\[
    0 \rightarrow P_n \rightarrow \dots \rightarrow P_0 \rightarrow R \rightarrow 0
\]
where \(P_n\) is finitely generated. If \(G\) is of type \(\FP_n(R)\) and \(\cd_R(G) \leqslant n\), then \(G\) is of type \(\FTP_n(R)\). If \(G\) is of type \(\FTP_n(R)\), then \(\cd_R(G) \leqslant n\). The terminology \(\FTP_n(R)\) is non-standard.

\subsection{The category of \texorpdfstring{\(R\)}{R}-division rings and specialisations}\label{subsec:Rdivrings}

The material of this subsection will be used in the proof of \cref{cor:D}, but no other results of the article will depend on it. Let \(R\) be an associative ring with unity. An \emph{\(R\)-division ring} is a division ring \(\mathcal D\) and a map \(\varphi \colon R \rightarrow \mathcal D\), such that \(\varphi(R)\) generates \(\mathcal D\) as a division ring (i.e.~\(\mathcal D\) is the smallest division subring of \(\mathcal D\) containing \(\varphi(R)\)).

A \emph{specialisation} of \(R\)-division rings \(\varphi_1 \colon R \rightarrow \mathcal D_1\) and \(\varphi_2 \colon R \rightarrow \mathcal D_2\) is an epimorphism of \(R\)-rings \(\rho \colon D \rightarrow \mathcal D_2\), where \(D \leqslant \mathcal D_1\) is a subring containing \(\varphi(R)\) such that every element of \(D \smallsetminus \ker \rho\) is invertible in \(D\) and the diagram
\[
    \begin{tikzcd}
        & R \arrow[ld, "\varphi_1"'] \arrow[d] \arrow[rd, "\varphi_2"] & \\
        \mathcal D_1 & D \arrow[l, hook'] \arrow[r,"\rho"] & \mathcal D_2
    \end{tikzcd}
\]
commutes. Note that \(D\) is thus a local ring with maximal ideal \(\ker \rho\). We will denote the specialisation by \(\rho \colon \mathcal D_1 \rightarrow \mathcal D_2\), even though \(\rho\) is not a map between the rings \(\mathcal D_i\).

We say that \(R\)-division rings \(\mathcal D_1\) and \(\mathcal D_2\) are \emph{isomorphic} if there is a bijection \(\mathcal D_1 \rightarrow \mathcal D_2\) of \(R\)-rings. This is in fact equivalent to the existence of two specialisations \(\mathcal D_1 \rightarrow \mathcal D_2\) and \(\mathcal D_2 \rightarrow \mathcal D_1\). Hence, the category of \(R\)-division rings and specialisations forms a poset, where \(\mathcal D_1 \geqslant \mathcal D_2\) if and only if there is a specialisation \(\mathcal D_1 \rightarrow \mathcal D_2\). An \(R\)-division ring \(\mathcal D\) is \emph{universal} if \(\mathcal D \geqslant \mathcal E\) for all \(R\)-division rings \(\mathcal E\).

Note that every \(R\)-division ring determines a rank function on the matrices over \(R\). Indeed, if \(\varphi \colon R \rightarrow \mathcal D\) is an \(R\)-division ring and \(M \colon R^n \rightarrow R^m\) is a matrix over \(R\), then there is an induced matrix \(M_{\mathcal D} \colon \mathcal D^n \rightarrow \mathcal D^m\), obtained by applying the functor \(\mathcal D \otimes_R -\) to \(M\). The \emph{rank} of \(M\) determined by \(\mathcal D\) is the dimension of the image of \(M_\mathcal D\) as a \(\mathcal D\)-module, and it is denoted by \(\rk_{\mathcal D}(M)\). Given two \(R\)-division rings \(\mathcal D_1\) and \(\mathcal D_2\), we write \(\rk_{\mathcal D_1} \geqslant \rk_{\mathcal D_2}\) if and only if \(\rk_{\mathcal D_1}(M) \geqslant \rk_{\mathcal D_2}(M)\) for every matrix \(M\) over \(R\). In this way, the \(R\)-division rings induce another poset, which turns out to be isomorphic to the poset of \(R\)-division rings and specialisations by the following result.

\begin{thm}[{\cite[Theorem 2]{Malcolmson_SkewFields}}]\label{thm:Malcolmson}
    If \(\mathcal D_1\) and \(\mathcal D_2\) are \(R\)-division rings, then \(\mathcal D_1 \geqslant \mathcal D_2\) if and only if \(\rk_{\mathcal D_1} \geqslant \rk_{\mathcal D_2}\).
\end{thm}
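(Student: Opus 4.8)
The plan is to prove the two implications separately: the forward direction (``specialisation $\Rightarrow$ rank inequality'') is elementary linear algebra, while the converse is where the real work lies and will rest on Cohn's structure theory of $R$-division rings (universal localisation and prime matrix ideals).

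For the forward direction I would first record the standard fact that, for any $R$-division ring $\varphi\colon R\to\mathcal D$ and any matrix $M$ over $R$, the rank $\rk_{\mathcal D}(M)$ equals the largest integer $r$ such that some $r\times r$ submatrix $M'$ of $M$ has $\varphi(M')\in M_r(\mathcal D)$ invertible --- this is just that the rank of a matrix over a division ring is the maximal order of an invertible square submatrix, applied to $M_{\mathcal D}$. Now suppose $\rho\colon D\to\mathcal D_2$ is a specialisation, with $D\leqslant\mathcal D_1$ local, $\varphi_1(R)\subseteq D$, and $\ker\rho$ the maximal ideal of $D$. Given $M$ over $R$, set $r=\rk_{\mathcal D_2}(M)$ and pick an $r\times r$ submatrix $M'$ of $M$ with $\varphi_2(M')$ invertible. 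Then $\varphi_1(M')\in M_r(D)$ reduces modulo $\ker\rho$ to a matrix over the division ring $D/\ker\rho$ which the induced embedding $D/\ker\rho\into\mathcal D_2$ sends to the invertible matrix $\varphi_2(M')$; since invertibility of a square matrix over a division ring is reflected along an embedding of division rings, $\varphi_1(M')$ is invertible modulo $\ker\rho$. As a square matrix over a local ring is invertible if and only if it is invertible modulo the maximal ideal (because $M_r(\ker\rho)$ is the Jacobson radical of $M_r(D)$), $\varphi_1(M')$ is invertible over $D$, hence over $\mathcal D_1$, so $\rk_{\mathcal D_1}(M)\geqslant r=\rk_{\mathcal D_2}(M)$. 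As $M$ was arbitrary, $\rk_{\mathcal D_1}\geqslant\rk_{\mathcal D_2}$.

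For the converse, assume $\rk_{\mathcal D_1}\geqslant\rk_{\mathcal D_2}$. Restricting the hypothesis to square matrices and using the fact above, every square matrix over $R$ invertible over $\mathcal D_2$ is also invertible over $\mathcal D_1$; let $\Sigma$ denote this set. Here I would invoke Cohn's theory: the universal localisation $\lambda\colon R\to R_\Sigma$ inverting $\Sigma$ is a local ring whose residue division ring is $\mathcal D_2$, so there is a surjection $\pi\colon R_\Sigma\onto\mathcal D_2$ with $\ker\pi$ the maximal ideal of $R_\Sigma$ and $\pi\lambda=\varphi_2$. Since every matrix of $\Sigma$ is invertible over $\mathcal D_1$, the universal property of $R_\Sigma$ yields $\psi\colon R_\Sigma\to\mathcal D_1$ with $\psi\lambda=\varphi_1$; put $D=\psi(R_\Sigma)\leqslant\mathcal D_1$, which contains $\varphi_1(R)$. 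If $x\in R_\Sigma\setminus\ker\pi$, then $x$ is a unit of the local ring $R_\Sigma$, so $\psi(x)$ is a nonzero element of $\mathcal D_1$; hence $\ker\psi\subseteq\ker\pi$, and $\pi$ factors through $D$ as $R_\Sigma\onto D\xrightarrow{\rho}\mathcal D_2$. It remains to check $\rho$ is a specialisation $\mathcal D_1\to\mathcal D_2$: it is surjective onto $\mathcal D_2$ (as $\pi$ is); the triangle with $\varphi_1$ and $\varphi_2$ commutes, since $\rho\varphi_1=\rho\psi\lambda=\pi\lambda=\varphi_2$; and if $a=\psi(x)\in D$ has $\rho(a)=\pi(x)\neq 0$, then $x\notin\ker\pi$, so $x$ is a unit of $R_\Sigma$ and $a$ is a unit of $D$ with inverse $\psi(x^{-1})\in D$. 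In particular the non-units of $D$ are exactly $\ker\rho$, so $D$ is local with maximal ideal $\ker\rho$, as required, and $\mathcal D_1\geqslant\mathcal D_2$.

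The main obstacle is the ingredient from Cohn's theory used in the converse: that the universal localisation of $R$ at the set of square matrices inverted by an $R$-division ring $\mathcal D$ exists, is a local ring, and has residue division ring $\mathcal D$, together with the universal property of this localisation. Developing this from scratch --- via (prime) matrix ideals and the explicit construction of $R_\Sigma$ --- is the substantial part; once it is available, the rest of the converse is a diagram chase with the definition of specialisation. The forward direction needs only a little care with noncommutative linear algebra (rank equals the size of the largest invertible submatrix; a square matrix over a local ring is invertible iff it is so modulo the radical), which is standard.
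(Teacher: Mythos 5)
The paper does not prove this statement at all: it is quoted verbatim as Malcolmson's theorem \cite[Theorem 2]{Malcolmson_SkewFields} and used as a black box (only in the proof of \cref{lem:removeFP2}), so there is no internal proof to compare against. Your reconstruction is the standard Cohn--Malcolmson argument and I see no gap in it. The forward direction is complete as written: rank over a division ring is the maximal size of an invertible square submatrix, invertibility is reflected along embeddings of division rings, and a square matrix over the local ring \(D\) is invertible iff it is invertible modulo \(\ker\rho\) since \(M_r(\ker\rho)\) is the Jacobson radical of \(M_r(D)\). The converse correctly reduces the hypothesis to the statement that the set \(\Sigma\) of square matrices inverted by \(\varphi_2\) is also inverted by \(\varphi_1\), and then the diagram chase producing \(D=\psi(R_\Sigma)\) and \(\rho\) is right: \(\ker\psi\subseteq\ker\pi\) because units of the local ring \(R_\Sigma\) map to units of \(D\), and elements of \(D\smallsetminus\ker\rho\) are invertible in \(D\) for the same reason. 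The one substantive ingredient you invoke rather than prove --- that for an epic \(R\)-division ring \(\mathcal D_2\) the universal localisation \(R_\Sigma\) is a local ring mapping onto \(\mathcal D_2\) with kernel its maximal ideal --- is exactly Cohn's theorem (\cite[Chapter 7]{Cohn_FreeRingsRelations}), and you flag this honestly; since the paper itself treats the whole equivalence as a citation, delegating this to the literature is entirely in keeping with the intended level of detail.
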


The theory of \(R\)-division rings and specialisations is due to Cohn, and we refer the reader to his book \cite[Chapter 7]{Cohn_FreeRingsRelations} for more details on the subject.

\section{Novikov rings and division rings}

\subsection{Novikov rings}

\begin{defn}[Crossed products]
    Let \(R\) be a ring and let \(G\) be a group. A \emph{crossed product} \(R*G\) is a ring with underlying Abelian group \(\bigoplus_{g \in G} R_g\), where \(R_g \subseteq R\) are additive subgroups, \(R_g R_h \subseteq R_{gh}\) for all \(g,h \in G\), each summand \(R_g\) contains a unit \(u_g\), and \(R_e = R\).
\end{defn}

If \(R*G\) is a crossed product, then every element of \(x \in R*G\) is uniquely expressed as a finite sum \(\sum_{g \in G} r_g u_g\), where \(r_g \in R\).

\begin{ex}
    The basic example of a crossed product is the group ring \(RG\) of a group \(G\) with coefficients in the ring \(R\), which we call a \emph{trivial} crossed product. Non-trivial examples often arise in the following setting. Suppose \(\ses{N}{G}{Q}\) is an extension of groups and that \(R\) is a ring. Then the group ring \(RG\) is isomorphic to a crossed product \(RN * Q\). More generally, a twisted group ring \(R*G\) decomposes as \((R*N)*Q\) in this setting. Note that the isomorphism is non-canonical as it depends on a choice of set-theoretic section \(Q \rightarrow G\).
\end{ex}

We will also frequently use infinite series of group elements, which we formalise as follows.

\begin{defn}[Twisted formal series]
    If \(R * G\) is a crossed product and \(M\) is an \(R\)-\(RG\)-bimodule, then we can form the \(R * G\)-bimodule \(M * \llbracket G \rrbracket\) of infinite formal sums of elements of \(M\) indexed by \(G\).
    
    More precisely, denote the action of \(G\) on \(M\) by \(m \mapsto m^s\) for \(s \in G\) and denote the distinguished units of \(R * G\) by \(u_s\) for \(s \in G\). The elements of \(M * \llbracket G \rrbracket\) are denoted by formal series \(\sum_{g \in G} m_g u_g\). Then the \(R*G\)-bimodule structure on \(M * \llbracket G \rrbracket\) is given by
    \[
        r u_s \cdot \sum_{g \in G} m_g u_g := \sum_{g \in G} \left(r m_g^s (u_s u_g u_{sg}\inv) \right) u_{sg}
    \]
    \[
        \sum_{g \in G} m_g u_g \cdot r u_s = \sum_{g \in G} \left( m_g (u_g r u_su_{gs}\inv) \right) u_{gs},
    \]
    where \(r \in R\).

    The elements \(m_g \in M\) appearing in a formal series \(x = \sum_{g \in G} m_g u_g\) are the \emph{coefficients} of \(x\) and the set \(\supp(x) := \{g \in G : m_g \neq 0\}\) is the \emph{support} of \(x\).
\end{defn}

The basic example of the construction above is the following. 

\begin{ex}
    If \(R*G\) is a crossed product, then \(G\) acts on \(R\) by \(r \mapsto r^g = u_g r u_g\inv\), so we may form \(R * \llbracket G \rrbracket\). Note that \(R * \llbracket G \rrbracket\) does not have a natural ring structure, unless \(G\) is finite, in which case it coincides with \(R*G\).
\end{ex}

\begin{defn}[Novikov rings]\label{def:novikov1}
    Let \(R*G\) be a crossed product of a group \(G\) and a ring \(R\) and let \(\chi \colon G \rightarrow \R\) be homomorphism. The \emph{Novikov ring} \(\cnov{R*G} \subseteq R * \llbracket G \rrbracket\) is the subset
    \[
        \left\{ x = \sum_{g \in G} r_g u_g : r_g \in R, \ \#\left( \supp(x) \cap \chi\inv \left(\left] -\infty, t \right]\right) \right) < \infty \ \text{for all} \ t \in \R \right\}.
    \]
\end{defn}

It is not hard to show that the map \(\cnov{R*G} \times \cnov{R*G} \rightarrow \cnov{R*G}\) given by 
\[
    \sum_{g \in G} r_g u_g \cdot \sum_{h \in G} r_h' u_h \quad \mapsto \quad \sum_{s \in G} \sum_{\substack{(g,h) \in G^2 \\ gh = s}} (r_g u_g r_g' u_h u_s\inv) u_s
\]
is well-defined and endows \(\cnov{R*G}\) with the structure of a ring.

\begin{lem}\label{lem:extendCrossProd}
    Let \(\ses{N}{G}{Q}\) be an extension of groups, let \(\chi \colon N \rightarrow \R\) be a homomorphism, and suppose that \(\chi(n) = \chi(n^g)\) for every \(n \in N\) and every \(g \in G\). If \(R*G\) is a crossed product, then there is a crossed product \(\cnov{R*N}*Q\), which naturally contains the crossed product \(R*G \cong (R*N)*Q\).
\end{lem}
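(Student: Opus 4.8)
The plan is to realize \(\cnov{R*N}*Q\) using the same crossed-product data as \(R*G \cong (R*N)*Q\), with the coefficient ring \(R*N\) enlarged to \(\cnov{R*N}\). Fix a set-theoretic section \(\sigma\colon Q \to G\) with \(\sigma(e)=e\), identifying \(R*G\) with the crossed product \((R*N)*Q\) having distinguished units \(u_{\sigma(q)}\), conjugation action \(x \mapsto x^{\sigma(q)} := u_{\sigma(q)}\,x\,u_{\sigma(q)}\inv\) of \(Q\) on \(R*N\), and two-cocycle \(\mu(q,q') := u_{\sigma(q)}u_{\sigma(q')}u_{\sigma(qq')}\inv\). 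One checks that \(\mu(q,q')\) equals a unit of \(R\) times a group element of \(N\) (using \(R_g = Ru_g\) in a crossed product), so \(\mu(q,q') \in R*N\) is in particular a unit of \(R*N\).

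The key step is to extend the conjugation action to \(\cnov{R*N}\). Since \(R*N = \bigoplus_{n\in N} Ru_n\) and \(u_{\sigma(q)}(ru_n)u_{\sigma(q)}\inv \in Ru_{n^{\sigma(q)}}\) with \(n^{\sigma(q)} \in N\), conjugation extends coefficient-wise to \(R*\llbracket N \rrbracket\) via \(x^{\sigma(q)} := \sum_n (r_nu_n)^{\sigma(q)}\) for \(x = \sum_n r_nu_n\), and the result is supported on \(\{n^{\sigma(q)} : n \in \supp(x)\}\). Now \(\chi\) is automatically invariant under conjugation by \(N\) (as \(\R\) is abelian), so the hypothesis \(\chi(n)=\chi(n^g)\) is equivalent to \(\chi\)-invariance under conjugation by each \(\sigma(q)\); hence \(n \mapsto n^{\sigma(q)}\) is a \(\chi\)-preserving bijection of \(N\), and therefore \(\supp(x^{\sigma(q)}) \cap \chi\inv\left(\left]-\infty,t\right]\right)\) is the bijective image of \(\supp(x) \cap \chi\inv\left(\left]-\infty,t\right]\right)\) and is finite whenever the latter is. Thus conjugation by \(u_{\sigma(q)}\) restricts to a ring automorphism of \(\cnov{R*N}\) (it is additive and, being computed coefficient-wise, multiplicative), with inverse conjugation by \(u_{\sigma(q)}\inv\). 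I expect this to be the main — essentially the only conceptual — obstacle, and it is the sole place where the hypothesis on \(\chi\) enters.

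With this in hand, I would define \(\cnov{R*N}*Q\) as the free left \(\cnov{R*N}\)-module on symbols \(\{v_q\}_{q \in Q}\), i.e.~finite sums \(\sum_q a_qv_q\) with \(a_q \in \cnov{R*N}\), with multiplication extended bilinearly from \(av_q \cdot bv_{q'} := \bigl(a\,b^{\sigma(q)}\,\mu(q,q')\bigr)v_{qq'}\) (note \(\mu(q,q') \in R*N \subseteq \cnov{R*N}\) is a unit of \(\cnov{R*N}\)). The identities needed for this to be an associative ring and a crossed product in the sense of the definition — \(u_{\sigma(q)}u_{\sigma(q')} = \mu(q,q')u_{\sigma(qq')}\), the twisted-action relation \((x^{\sigma(q')})^{\sigma(q)} = \mu(q,q')\,x^{\sigma(qq')}\,\mu(q,q')\inv\), and the cocycle identity arising from associativity of \(u_{\sigma(q_1)}(u_{\sigma(q_2)}u_{\sigma(q_3)})\) — all hold in \(R*G\) for arguments in \(R*N\); the first two extend to all of \(\cnov{R*N}\) since both sides are computed coefficient-wise, and the cocycle identity involves no series. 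Granting these, associativity of \(\cnov{R*N}*Q\) is formal, each \(v_q\) is a unit (as \(v_qv_{q\inv} = \mu(q,q\inv)\) and \(v_{q\inv}v_q = \mu(q\inv,q)\) are units of \(\cnov{R*N}\)), \(v_e = 1\), and with grading \((\cnov{R*N}*Q)_q = \cnov{R*N}v_q\) and \((\cnov{R*N}*Q)_e = \cnov{R*N}\) the axioms are satisfied. Alternatively, one may invoke the standard construction of a crossed product from a weak action together with a cocycle once the previous step has placed this data inside \(\cnov{R*N}\). Finally, the inclusion \(R*N \hookrightarrow \cnov{R*N}\) intertwines the conjugation actions and fixes \(\mu\), so \(\sum_q a_qu_{\sigma(q)} \mapsto \sum_q a_qv_q\) (for \(a_q \in R*N\)) is a ring embedding \((R*N)*Q \hookrightarrow \cnov{R*N}*Q\) carrying distinguished units to distinguished units; composing with the isomorphism \(R*G \cong (R*N)*Q\) exhibits \(R*G\) naturally inside \(\cnov{R*N}*Q\).
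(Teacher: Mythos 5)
Your proposal is correct and takes essentially the same approach as the paper: both fix a set-theoretic section and observe that the hypothesis $\chi(n)=\chi(n^g)$ makes $n\mapsto n^{\sigma(q)}$ a $\chi$-preserving bijection of $N$, so conjugation by $u_{\sigma(q)}$ preserves $\cnov{R*N}$, whence the crossed-product multiplication on $(R*N)*Q$ extends. The paper stops at "Hence, the multiplication $\dots$ extends naturally"; you simply spell out the resulting action-and-cocycle construction of $\cnov{R*N}*Q$ and the verification of the crossed-product axioms, which the paper treats as routine.
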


\begin{proof}
    Fix a section \(s \colon Q \rightarrow G\). If \(x = \sum_{n \in N} r_n u_n\) is contained in \(\cnov{R*N}\), then so is
    \begin{align*}
        u_{s(q)} x u_{s(q)}\inv &= \sum_{n \in N} u_{s(q)} r_n u_n u_{s(q)}\inv \\
        &= \sum_{n \in N} \left( u_{s(q)} r_n u_{s(q)}\inv u_{s(q)} u_n u_{s(q)}\inv u_{s(q)ns(q)\inv}\inv \right) u_{s(q)ns(q)\inv},
    \end{align*}
    which follows immediately from the assumption that \(\chi(n) = \chi(s(q)ns(q)\inv)\). Hence, the multiplication on \(R*G \cong (R*N)*Q\) extends naturally to a well-defined multiplication on \(\cnov{R*N}*Q\). \qedhere
\end{proof}

In particular, \cref{lem:extendCrossProd} applies to central extensions \(\ses{N}{G}{Q}\).

\begin{defn}[Multicharacters and Novikov rings]\label{def:novikov_ring}
    Let \(Q\) be a nilpotent group. A \emph{multicharacter} \(\chi\) on \(Q\) is a list of homomorphisms
    \[
        \chi = (\chi_i \colon Q_i/Q_{i+1} \rightarrow \R)_{i=0}^{n-1},
    \]
    where \(1 = Q_n \leqslant \dots \leqslant Q_0 = Q\) is a central series. The homomorphisms \(\chi_i\) are called the \emph{components} of \(\chi\), and the central series and \(\chi\) are \emph{associated} to each other.

    Let \(R\) be a ring and let \(\chi\) be a multicharacter on \(Q\) associated to a central series as above, and suppose that \(R * Q\) is a crossed product. By \cref{lem:extendCrossProd}, we may define the rings
    \[
        R_n = R, \quad R_i = \nov{R_{i+1} * Q_i/Q_{i+1}}{\chi_i} \ \text{for} \ n > i \geqslant 0
    \]
    by reverse induction. We call \(R_0\) the \emph{Novikov ring} of \(R*Q\) and \(\chi\), and denote it by \(\nov{R * Q}{\chi}\).

    Suppose that \(\ses{N}{G}{Q}\) is a short exact sequence of groups, where \(Q\) is nilpotent, and \(\chi\) is a multicharacter on \(Q\). Then we call \(\nov{RG}{\chi} := \nov{RN*Q}{\chi}\) the \emph{Novikov ring} of \(RG\) and \(\chi\).
\end{defn}

\begin{rem}
    If \(Q\) is nilpotent of step \(n\), then \(\chi\) must have at least \(n\) components. In particular, if \(\chi\) has one component, then \(Q\) is Abelian and \(\nov{RG}{\chi}\) coincides with the usual Novikov ring of \cref{def:novikov1}.
\end{rem}

\begin{lem}\label{lem:novikov_series}
    Let \(Q\) be a nilpotent group and let \(R*Q\) be a crossed product. Let \(\chi\) be a multicharacter associated to a central series \(1 = Q_n \leqslant \dots \leqslant Q_0 = Q\). Then there is a natural inclusion \(\cnov{R*Q} \hookrightarrow R*\llbracket Q \rrbracket\) of \(R*Q\)-bimodules.
\end{lem}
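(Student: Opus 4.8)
The plan is to argue by induction on the length $n$ of the central series $1 = Q_n \leqslant \dots \leqslant Q_0 = Q$. When $n \leqslant 1$ there is nothing to do: $Q$ is then Abelian, $\chi$ has at most one component, and $\cnov{R*Q}$ is by \cref{def:novikov1} literally a subset of $R*\llbracket Q\rrbracket$ that is visibly stable under the $R*Q$-bimodule operations (when $n=0$ both rings equal $R$).

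For the inductive step I would put $N = Q_1$ and $\bar Q := Q_0/Q_1$, so that $1 = Q_n \leqslant \dots \leqslant Q_1 = N$ is a central series of length $n-1$ for $N$ with associated multicharacter $\chi' := (\chi_1,\dots,\chi_{n-1})$, and, unwinding the reverse induction in \cref{def:novikov_ring}, $\cnov{R*Q} = R_0 = \nov{R_1*\bar Q}{\chi_0}$ with $R_1 = \nov{R*N}{\chi'}$. The inductive hypothesis then supplies a natural inclusion of $R*N$-bimodules $\iota\colon R_1 \hookrightarrow R*\llbracket N\rrbracket$. On the other hand, $\nov{R_1*\bar Q}{\chi_0}\subseteq R_1*\llbracket\bar Q\rrbracket$ by \cref{def:novikov1}, where the target is the $R_1*\bar Q$-bimodule of formal sums $\sum_{\bar q\in\bar Q}a_{\bar q}u_{\bar q}$ with $a_{\bar q}\in R_1$; here, via $R*Q\cong(R*N)*\bar Q$ and \cref{lem:extendCrossProd}, the distinguished unit $u_{\bar q}$ is identified with $u_{s(\bar q)}$ for the set-theoretic section $s\colon\bar Q\to Q$ fixed in that decomposition.

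I would then define the inclusion $\Phi\colon\cnov{R*Q}\hookrightarrow R*\llbracket Q\rrbracket$ by \emph{flattening}: for $x=\sum_{\bar q\in\bar Q}a_{\bar q}u_{\bar q}\in R_0$, set
\[
    \Phi(x) := \sum_{\bar q\in\bar Q}\iota(a_{\bar q})\,u_{s(\bar q)},
\]
where $\iota(a_{\bar q})\in R*\llbracket N\rrbracket$ is regarded as an element of $R*\llbracket Q\rrbracket$ and multiplied on the right by $u_{s(\bar q)}\in R*Q$. Writing $\iota(a_{\bar q})=\sum_{m\in N}r_{\bar q,m}u_m$, the summand $\iota(a_{\bar q})u_{s(\bar q)}$ equals $\sum_{m\in N}r_{\bar q,m}(u_m u_{s(\bar q)}u_{ms(\bar q)}\inv)u_{ms(\bar q)}$, which is supported on the right coset $Ns(\bar q)$; since these cosets partition $Q$, the formal series $\Phi(x)$ is a well-defined element of $R*\llbracket Q\rrbracket$. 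Injectivity is immediate: each $u_m u_{s(\bar q)}u_{ms(\bar q)}\inv$ is a unit of $R$, so $\Phi(x)=0$ forces every $r_{\bar q,m}=0$, hence every $\iota(a_{\bar q})=0$, hence every $a_{\bar q}=0$ since $\iota$ is injective.

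The remaining and most substantial point is that $\Phi$ is a morphism of $R*Q$-bimodules, where $\cnov{R*Q}$ carries the structure obtained by restricting the $R_1*\bar Q$-bimodule structure on $R_1*\llbracket\bar Q\rrbracket$ along $R*Q\cong(R*N)*\bar Q\hookrightarrow R_1*\bar Q$ (equivalently, the structure coming from the ring inclusion $R*Q\subseteq\cnov{R*Q}$). By additivity it is enough to check compatibility with left and right multiplication by $r\in R$ and by $u_g$, $g\in Q$; writing $g=m_0 s(\bar g)$ with $m_0\in N$ and $\bar g=gN$, this unwinds — using that $\iota$ is a morphism of $R*N$-bimodules and, by its naturality, is equivariant for the conjugation actions of $Q$ (through the section $s$) on $R_1$ and on $R*\llbracket N\rrbracket$ — to the cocycle identities relating the multiplications of the crossed products $R*N$, $R*Q$ and $R_1*\bar Q$, that is, to the associativity built into $R*Q\cong(R*N)*\bar Q$. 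This is a direct but notation-heavy computation, and it is the main obstacle of the argument, albeit a purely bookkeeping one; it is also where the naturality carried in the inductive hypothesis is genuinely used. Finally, the construction uses nothing beyond the central series, the crossed product $R*Q$, and the section already fixed in \cref{def:novikov_ring}, which is the sense in which the resulting inclusion is natural.
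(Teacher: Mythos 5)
Your proposal is correct and follows essentially the same approach as the paper: both proofs proceed by induction along the central series, peeling off one Abelian quotient $Q_i/Q_{i+1}$ at a time and using the definitional containment $\nov{R_{i+1} * Q_i/Q_{i+1}}{\chi_i} \subseteq R_{i+1}*\llbracket Q_i/Q_{i+1}\rrbracket$. The only cosmetic difference is that the paper carries the nested power-series modules $M_i = M_{i+1}*\llbracket Q_i/Q_{i+1}\rrbracket$ through a single reverse induction and identifies $M_0 \cong R*\llbracket Q\rrbracket$ at the end, whereas you flatten into $R*\llbracket Q\rrbracket$ at each step via the coset decomposition; both leave the verification of the $R*Q$-bimodule structure and the $Q$-equivariance of the embedding at the same level of implicitness.
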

\begin{proof}
    Consider the modules \(M_n = R\) and \(M_i = M_{i+1} * \llbracket Q_i/Q_{i+1} \rrbracket\), for \(i \leqslant n\). Then \(R * \llbracket Q \rrbracket \cong M_0\), and it follows by reverse induction that there are \(R*Q_i\)-bimodule inclusions \(R_i \hookrightarrow M_i\) for each \(i \leqslant n\) (where we are using the notation of \cref{def:novikov_ring}).
\end{proof}

\subsection{Division rings}

In this subsection, we will introduce the various division rings that we will work with, and the relations between them. Note that these division rings will always contain the group algebra \(kG\) as subrings, so the groups we study necessarily have no zero divisors, and in particular are torsion-free. There are no known examples of torsion-free groups whose group algebras do not admit an embedding into a division ring, and there are several conjectures that predict that group algebras of (some subclasses) of torsion-free groups admit embeddings into division rings. For example, \emph{Malcev's Problem} is to show that \(kG\) admits an embedding into a division ring whenever \(G\) is left orderable. The \emph{Strong Atiyah Conjecture} for a torsion-free group \(G\) is equivalent to the Linnell ring being a division ring (see \cref{subsec:LinnellRPVN}), and Jaikin-Zapirain and Linton conjecture that \(kG\) admits a unique embedding into a division ring that has the Linnell property (see \cref{rem:Linnell}).

We begin by recalling the Ore condition.

\begin{defn}[Ore condition]
    Let \(R\) be a ring. A subset \(S \subseteq R\) is called \emph{multiplicative} if \(1 \in S\) and \(s_1, s_2 \in S\) implies \(s_1s_2 \in S\). We say that \(R\) satisfies the \emph{(right) Ore condition} at a multiplicative subset \(S \subseteq R\) if \(rS \cap sR \neq \varnothing\) for all \(s \in S\) and \(r \in R\). If \(R\) satisfies the Ore condition at \(S\), then we may form a ring \(\Ore_S(R)\) called the \emph{Ore localisation} of \(R\) at \(S\), which consists of formal expressions \(rs\inv\), where \(r \in R\), \(s \in S\). The expressions \(rs\inv\) are equivalence classes of pairs \((r,s) \in R \times S\), where \((r,s) \sim (r',s')\) if and only if there is some \(s_0 \in S\) such that
    \[
        rs_0 = r' \quad \text{and} \quad ss_0 = s'.
    \]
    The interested reader can find more details on the Ore condition in \cite[Section 4.4]{PassmanGrpRng}.

    If \(R\) has no zero divisors and it satisfies the Ore condition at \(R \smallsetminus \{0\}\), then \(R\) is called an \emph{Ore domain} and its Ore localisation at \(R \smallsetminus \{0\}\) is a division ring denoted \(\Ore(R)\).
\end{defn}

If \(R\) is an Ore domain, then the map \(i \colon R \rightarrow \Ore(R)\) given by \(i(r) = r1\inv\) is an embedding, and it satisfies the following universal property: If \(f \colon R \rightarrow D\) is a ring homomorphism such that \(f(R \smallsetminus \{0\}) \subseteq D^\times\), then there is a unique map \(\Ore(f) \colon \Ore(R) \rightarrow D\) such that \(f = \Ore(f) \circ i\).

The following instance of the above fact is useful. If \(R \subseteq D\) is an inclusion of rings, where \(R\) is an Ore domain and \(D\) is a division ring such that there are no division rings \(E\) satisfying \(R \subset E \subset D\), then \(D \cong \Ore(R)\). We will also make frequent use of the following result (recall that nilpotent groups are elementary amenable).

\begin{thm}[{Kropholler--Linnell--Moody \cite{KrophollerLinnellMoody_Ore}}]
    If $k*Q$ is a crossed product of a division ring $k$ and a torsion-free elementary amenable group $Q$, then $k*Q$ is an Ore domain.
\end{thm}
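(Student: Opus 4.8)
The plan is to separate the statement into two parts: (i) the crossed product \(k*Q\) has no zero divisors, and (ii) \(k*Q\) satisfies the right Ore condition at \(k*Q \smallsetminus \{0\}\). Assertion (ii) is a soft consequence of amenability once (i) is known, so I would treat it first. Write \(R = k*Q\), fix nonzero \(a, b \in R\), and put \(F_0 = \supp(a) \cup \supp(b) \cup \{1\}\), a finite subset of \(Q\). Since \(Q\) is elementary amenable it is amenable, so for each \(\varepsilon > 0\) there is a nonempty finite \(F \subseteq Q\) with \(\abs{F_0 F} \leqslant (1 + \varepsilon)\abs{F}\). Consider the right \(k\)-subspaces \(W = \bigoplus_{g \in F} u_g k\) and \(V = \bigoplus_{x \in F_0 F} u_x k\) of \(R\). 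Each product \(u_h u_g\) is a unit of \(R\) times \(u_{hg}\), so the supports of \(a u_g\) and \(b u_g\) lie in \(F_0 F\) for every \(g \in F\); hence \(aW, bW \subseteq V\). As \(R\) is a domain and \(a, b \neq 0\), left multiplication by \(a\) and by \(b\) is injective on \(W\) (a map of right \(k\)-modules), so \(aW\) and \(bW\) have \(k\)-dimension \(\abs{F}\), whereas \(\dim_k V = \abs{F_0 F} \leqslant (1 + \varepsilon)\abs{F}\). Taking \(\varepsilon < 1\), the formula \(\dim_k(aW \cap bW) = \dim_k aW + \dim_k bW - \dim_k(aW + bW)\) forces \(\dim_k(aW \cap bW) \geqslant 2\abs{F} - \abs{F_0 F} > 0\), so there is a nonzero \(v = a w_1 = b w_2\) with \(w_1, w_2 \in W\); since \(v \neq 0\) and \(R\) is a domain, \(w_1 \neq 0\). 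Thus \(a w_1 = b w_2\) with \(w_1 \in R \smallsetminus \{0\}\), which is exactly the Ore condition (the case \(a = 0\) being trivial); the left condition follows symmetrically, though the right one already suffices to form \(\Ore(R)\).

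For assertion (i), the torsion-freeness of \(Q\) is indispensable, and since \(kQ \subseteq k*Q\) this is a crossed-product version of the Kaplansky zero-divisor conjecture for torsion-free elementary amenable groups. I would prove it by transfinite induction along the hierarchy defining the class of elementary amenable groups --- the smallest class of groups containing all finite and all abelian groups and closed under extensions and directed unions. The directed-union case is immediate: a putative identity \(ab = 0\) with \(a, b \neq 0\) involves only finitely many group elements and so already holds in a crossed product over a finitely generated, hence earlier, subgroup. By the same finite-support reduction, the abelian base case reduces to \(Q\) finitely generated free abelian, where \(k*Q\) is an iterated skew-Laurent polynomial ring over the division ring \(k\), hence a Noetherian domain.

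The main obstacle is the extension step. Given \(\ses{N}{Q}{P}\) with \(N\) of strictly smaller rank in the hierarchy --- hence, being torsion-free, covered by the induction hypothesis --- one has a crossed-product decomposition \(k*Q \cong (k*N)*P\), and the induction hypothesis supplies the division ring of fractions \(D = \Ore(k*N)\). The automorphisms of \(k*N\) by which \(P\) acts extend uniquely to \(D\), yielding a crossed product \(D*P\) and an embedding \(k*Q \cong (k*N)*P \hookrightarrow D*P\); it therefore suffices to prove that \(D*P\) has no zero divisors. The difficulty is that the quotient \(P\) may well have torsion, so one cannot simply recurse on \(P\). This is precisely where Kropholler--Linnell--Moody invoke their \(K\)-theoretic input --- Moody's induction theorem, describing \(G_0\) of crossed products over polycyclic-by-finite (and, via the hierarchy, elementary amenable) groups in terms of induction from finite subgroups --- which is used to control idempotents in \(D*P\) and thereby exclude zero divisors, exploiting crucially that the ambient group \(Q\) is torsion-free. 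I would not reproduce that argument, citing \cite{KrophollerLinnellMoody_Ore} for it instead; it is unquestionably the heart of the matter, whereas the reduction to (i) and (ii), the F\o{}lner trick for Ore, and the soft parts of the induction are routine by comparison.
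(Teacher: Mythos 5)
The paper states this result as a citation to Kropholler–Linnell–Moody and gives no proof, so there is no in-paper argument to compare against; your sketch is a correct outline of the standard proof. You rightly decouple the two assertions: the right Ore condition is a soft consequence of amenability once the domain property is known, via the Følner/Tamari dimension count (your argument that \(\dim_k(aW\cap bW)\geqslant 2\abs{F}-\abs{F_0F}>0\) is exactly right, using that left multiplication by a nonzero element is an injective right-\(k\)-linear map); and the domain property is proved by transfinite induction along the elementary amenable hierarchy, where directed unions and the abelian base case (reduce to finitely generated, hence free abelian, and note that a crossed product \(k*\Z^n\) is an iterated skew Laurent extension, since \(\H^2(\Z;R^\times)=0\) kills the cocycle at each stage) are routine, and the extension step is where Moody's induction theorem enters and must be black-boxed. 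Since both your sketch and the paper ultimately rest on citing \cite{KrophollerLinnellMoody_Ore} for this last point, the approaches agree; yours merely makes the reduction explicit, which is accurate and would be a reasonable thing to include if one wanted to expand the citation into an argument.
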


\begin{defn}[Hughes-free embeddings] \label{def:hughes_free}
    Let \(G\) be a locally indicable group and let \(k\) be a field. If \(\mathcal D\) is a division ring and there is an embedding \(\varphi \colon kG \hookrightarrow \mathcal D\) such that
    \begin{enumerate}
        \item \(\varphi(kG)\) generates \(\mathcal D\) as a division ring, and
        \item for every pair of subgroups \(N \lhd H\) of \(G\) such that \(H/N \cong \Z\), the multiplication map
        \[
            \mathcal D_N \otimes_{kN} kH \rightarrow \mathcal D
        \]
        is injective, where \(\mathcal D_N\) denotes the division closure of \(kN\) in \(\mathcal D\),
    \end{enumerate}
    then the embedding \(\varphi\) is \emph{Hughes-free}, and we say that \(\mathcal D\) is a \emph{Hughes-free} division ring for \(kG\).
\end{defn}

\begin{rem}\label{rem:Linnell}
    There is also the following related more general notion: Let \(G\) be a group such that \(\varphi \colon kG \hookrightarrow \mathcal D\) is an embedding, where \(\mathcal D\) is a division ring. Then \(\varphi\) is \emph{Linnell} (following the terminology of \cite{JaikinLinton_coherence}) if the multiplication map 
    \[
        \mathcal D_H \otimes_{kH} \mathcal D \rightarrow \mathcal D
    \]
    is injective for any subgroup \(H \leqslant G\). It is not hard to show that if \(kG\) is an Ore domain, then \(kG \hookrightarrow \mathcal \Ore(kG)\) is Linnell. By a recent result of Gräter, if \(G\) is locally indicable and \(kG \hookrightarrow \mathcal D\) is Hughes-free, then it is Linnell \cite[Corollary 8.3]{Grater20}.
\end{rem}

\begin{thm}[{Hughes \cite{HughesDivRings1970}}]
    Let \(G\) be a locally indicable group and let \(k\) be a field. If \(kG\) has a Hughes-free embedding, then it is unique up to (\(kG\)-algebra)-isomorphism.
\end{thm}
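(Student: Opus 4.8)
The plan is to establish the stronger statement: between any two Hughes-free embeddings $\varphi_1\colon kG\hookrightarrow\mathcal D_1$ and $\varphi_2\colon kG\hookrightarrow\mathcal D_2$ there is a \emph{unique} $kG$-algebra isomorphism $\mathcal D_1\to\mathcal D_2$. The uniqueness is not a mere strengthening but the engine of the argument: it is what forces the partial isomorphisms constructed below to be mutually compatible. Two preliminary observations: (i) if $H\leqslant G$, then $H$ is again locally indicable and the induced embedding $kH\hookrightarrow\mathcal D_H$ (the division closure of $kH$ in $\mathcal D$) is again Hughes-free, since conditions (1) and (2) of \cref{def:hughes_free} only involve pairs $N\lhd H'\leqslant H$; and (ii) $\mathcal D_i=\bigcup_{H}(\mathcal D_i)_H$, the union over finitely generated subgroups $H\leqslant G$, because the right-hand side is already a division ring containing $\varphi_i(kG)$. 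Granting the theorem for finitely generated groups, observation (ii) together with uniqueness settles the general case: for $H\leqslant H'$ finitely generated, $\psi_{H'}$ restricts to a $kH$-algebra isomorphism $(\mathcal D_1)_H\to(\mathcal D_2)_H$ (it fixes $kH$, hence carries division closures to division closures), which must be $\psi_H$; so the $\psi_H$ assemble to the desired $\psi\colon\mathcal D_1\to\mathcal D_2$. We therefore assume $G$ finitely generated.

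The key structural input is the recursion that Hughes-freeness imposes. Suppose $N\lhd H\leqslant G$ with $H/N\cong\Z$, and choose $t\in H$ mapping to a generator (the extension splits since $\Z$ is free). Then $kH\cong kN[t^{\pm1};\sigma]$, a skew Laurent polynomial ring, and condition (2) of \cref{def:hughes_free} says the multiplication map $\mathcal D_N\otimes_{kN}kH\to\mathcal D$ is injective; its image is the skew Laurent polynomial ring $\mathcal D_N[t^{\pm1};\sigma]$ over the division ring $\mathcal D_N$, where $\sigma$ is conjugation by $t$. Such a ring is a principal ideal domain, hence an Ore domain, and its Ore localisation is a division ring lying between $kH$ and $\mathcal D_H$ inside $\mathcal D$; being a division ring containing $kH$, it equals $\mathcal D_H$. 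Thus $\mathcal D_H\cong\Ore\big(\mathcal D_N[t^{\pm1};\sigma]\big)$, canonically from $\mathcal D_N$ and the intrinsic data of $kH$. Consequently, a $kN$-algebra isomorphism $\mathcal D_N\to\mathcal D_N'$ intertwining the two copies of $\sigma$ extends uniquely, via the universal property of Ore localisation, to a $kH$-algebra isomorphism of the localisations.

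The main body is an induction producing $\psi_H$ for finitely generated $H$. If $H=1$ then $(\mathcal D_i)_H=k$ and $\psi_H=\id$. Otherwise, since $H$ is finitely generated and locally indicable, fix an epimorphism $\chi\colon H\onto\Z$ with kernel $M$ and a lift $t\in H$ of a generator. Write $M$ as the directed union of its finitely generated subgroups $M_j$; by the inductive hypothesis we have isomorphisms $\psi_{M_j}\colon(\mathcal D_1)_{M_j}\to(\mathcal D_2)_{M_j}$, and by their uniqueness they are compatible, so they glue to a $kM$-algebra isomorphism $\psi_M\colon(\mathcal D_1)_M\to(\mathcal D_2)_M$ which is again the unique such. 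Now $t$ normalises $M$, so conjugation by $t$ gives semilinear automorphisms $c_t^{(i)}$ of $(\mathcal D_i)_M$; the composite $c_t^{(2)}\circ\psi_M\circ(c_t^{(1)})^{-1}$ is once more a $kM$-algebra isomorphism $(\mathcal D_1)_M\to(\mathcal D_2)_M$ (it is $kM$-linear because conjugation by $t$ twists $kM$ identically on the two sides), hence equals $\psi_M$ by uniqueness; that is, $\psi_M$ intertwines the $c_t^{(i)}$. The previous paragraph then upgrades $\psi_M$ to a $kH$-algebra isomorphism $\psi_H\colon(\mathcal D_1)_H\to(\mathcal D_2)_H$. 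Any such isomorphism fixes $t$ and restricts to $\psi_M$ on the division closure of $kM$, and these generate $(\mathcal D_1)_H$ as a division ring, so $\psi_H$ is unique, completing the inductive step.

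The main obstacle is that this ``induction'' is not literally well-founded as written: the kernel $M$ of $\chi\colon H\onto\Z$ is in general \emph{not} finitely generated, and its finitely generated subgroups $M_j$ need not be simpler than $H$ (for instance the minimal number of generators of $M_j$ typically grows with $j$), so there is no evident ordinal-valued complexity on finitely generated locally indicable groups that strictly decreases. Turning the recursion into a legitimate proof requires organising it as a genuine transfinite induction, run over a carefully chosen well-ordered system of all the finitely generated subgroups that appear, and carrying the uniqueness assertion along as part of the inductive hypothesis so that the colimits $\psi_M$ and the intertwining property above are defined at each stage. This bookkeeping, rather than any isolated computation, is the substance of the theorem; I would follow Hughes' original argument (or one of the streamlined later accounts) to carry it out.
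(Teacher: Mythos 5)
The paper does not prove this theorem; it is quoted as a black box from Hughes \cite{HughesDivRings1970}, so there is no in-paper argument to compare yours against. Evaluating your sketch on its own terms: the skeleton you describe is the right one, and matches the structure of Hughes' original proof and its later streamlined accounts. The reduction to finitely generated subgroups via directed unions, the use of local indicability to peel off a quotient $H\onto\Z$ with kernel $M$, the identification $\mathcal D_H\cong\Ore\bigl(\mathcal D_M[t^{\pm1};\sigma]\bigr)$ coming from condition (2) of Hughes-freeness, the intertwining-by-uniqueness trick to match the $\sigma$-actions, and the observation that uniqueness must be carried through the induction so that the partial isomorphisms are forced to be compatible --- all of this is correct and is genuinely the right mechanism.

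However, the gap you flag at the end is not a deferrable technicality: it is the theorem. As written, your ``induction'' is circular. To produce $\psi_M$ you take a colimit of $\psi_{M_j}$ over finitely generated $M_j\leqslant M$, invoking ``the inductive hypothesis'' for the $M_j$; but $M$ is typically infinitely generated, the $M_j$ are not simpler than $H$ in any evident sense, and indeed $M_j$ can recur on the other side of a further split $M_j\onto\Z$ without terminating. There is no well-founded relation on finitely generated locally indicable groups that this recursion descends along. The fix in Hughes' proof is to run the transfinite induction not over subgroups but over a complexity assigned to \emph{elements} of the division ring --- roughly, an ordinal measuring the depth of nested inversions needed to produce an element from $kG$ --- and to deploy the skew-Laurent-ring lemma you isolate at each successor stage to drive that complexity down. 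Setting up this complexity, proving it is well-defined and independent of choices, and verifying that the reduction step genuinely decreases it is the substantive content of the theorem. Your outline is therefore an accurate précis of the proof's architecture, but as a proof it has a single load-bearing hole, and you correctly identify exactly where it is.
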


In view of this result, if \(\varphi \colon kG \hookrightarrow \mathcal D\) is a Hughes-free embedding, then we will denote \(\mathcal D\) by \(\Dk{G}\) and call it the Hughes-free division ring of \(kG\). We will also suppress the embedding \(\varphi\) and simply view \(kG\) as a subring of \(\Dk{G}\). Note that the division closure of \(kH\) in \(\Dk{G}\) is isomorphic to \(\Dk{H}\) for any subgroup \(H \leqslant G\).

\begin{defn}[Malcev--Neumann division ring]\label{def:malcev_neumann}
    Let \(G\) be a group with an order \(\ol\), let \(k\) be a division ring, and let \(k*G\) be a crossed product. The \emph{Malcev-- Neumann division ring} of \(k*G\) with respect to \(\ol\) is
    \[
        k *_\prec G = \left\{ \sum_{g \in G} \lambda_g u_g \ : \ \lambda_g \in k, \ \{g : \lambda_g \neq 0\} \ \text{is well-ordered with respect to} \ \prec \right\}.
    \]
\end{defn}

Malcev \cite{Malcev_series} and Neumann \cite{Neumann_series} independently proved that \(k *_\prec G\) is a division ring whose addition and multiplication extend those of \(k*G\) in the natural way.

The following proposition relates the constructions introduced so far. Its proof is standard; the interested reader may wish to consult \cite[Lemmas 3.1 and 3.2]{OkunSchreve_DawidSimplified}.

\begin{prop}\label{prop:ore_HF_MN}
    Let \(\ses{N}{G}{Q}\) be a short exact sequence of groups where \(G\) is locally indicable, and let \(k\) be a field such that \(\Dk{G}\) exists. Then there is a crossed product structure \(\Dk{N} * Q\) such that \(kG \cong kN * Q \subseteq \Dk{N} * Q \subseteq \Dk{G}\) and the following hold.
    \begin{enumerate}
        \item If \(Q\) is amenable, then \(\Dk{G} \cong \Ore(\Dk{N} * Q)\).
        \item If \(Q\) has an order \(\ol\), then \(\Dk{G}\) is isomorphic to the division closure of \(kG\) in \(\Dk{N} *_\prec Q\).
    \end{enumerate}
\end{prop}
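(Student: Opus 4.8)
The plan is to carry out everything inside \(\Dk{G}\), which exists by hypothesis, first realising \(\Dk N * Q\) as a subring and then identifying \(\Dk G\) with the appropriate localisation of it in each of the two cases. \textit{Step 1: the crossed product.} Write \(\mathcal D = \Dk G\) and let \(\mathcal D_N\) be the division closure of \(kN\) in \(\mathcal D\); recall \(\mathcal D_N \cong \Dk N\). Fix a set-theoretic section \(s \colon Q \to G\) and set \(u_q := u_{s(q)} \in kG \subseteq \mathcal D\). Each \(u_q\) is a unit, and conjugation by it is a ring automorphism of \(\mathcal D\) preserving \(kN\), hence preserving \(\mathcal D_N\); using \(kG \cong kN * Q\) one checks that the subring of \(\mathcal D\) generated by \(\mathcal D_N\) and the \(u_q\) equals \(\sum_{q \in Q} \mathcal D_N u_q\) and contains \(kG\). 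The content of this step is that this sum is direct, giving a genuine crossed product \(\mathcal D_N * Q = \Dk N * Q\): since \(kG \hookrightarrow \mathcal D\) is Linnell (Gräter, cf.\ \cref{rem:Linnell}) and \(kG\) is free as a left \(kN\)-module on \(\{u_q\}_{q \in Q}\), the \(u_q\) must be left linearly independent over \(\mathcal D_N\). This yields the chain \(kG \cong kN * Q \subseteq \Dk N * Q \subseteq \Dk G\).

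\textit{Step 2: amenable \(Q\).} By the Kropholler--Linnell--Moody theorem, \(\Dk N * Q\) is an Ore domain, so \(\Ore(\Dk N * Q)\) exists and is a division ring. The inclusion \(\Dk N * Q \hookrightarrow \Dk G\) inverts every nonzero element, so by the universal property of Ore localisation it extends to a ring homomorphism \(\Ore(\Dk N * Q) \to \Dk G\); this map is injective, being a nonzero homomorphism out of a division ring, and its image is a division subring of \(\Dk G\) containing \(kG\), hence equal to \(\Dk G\). Therefore \(\Dk G \cong \Ore(\Dk N * Q)\).

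\textit{Step 3: ordered \(Q\), and the main obstacle.} Now \(\Dk N *_\prec Q\) is a division ring (Malcev--Neumann) containing \(\Dk N * Q \supseteq kG\); let \(D'\) be the division closure of \(kG\) in it. I would show that the inclusion \(kG \hookrightarrow D'\) is Hughes-free; since \(\Dk G\) exists, Hughes's uniqueness theorem then gives \(D' \cong \Dk G\). To check Hughes-freeness, take \(N_0 \lhd H_0 \leqslant G\) with \(H_0/N_0 \cong \Z\) generated by the image of some \(t \in H_0\), write \(kH_0 = \bigoplus_{m \in \Z} kN_0 t^m\), and verify that \(\sum_m e_m t^m \neq 0\) in \(D'\) whenever the \(e_m \in D'_{N_0}\) (the division closure of \(kN_0\) in \(D'\)) are not all zero. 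This is a leading-term argument inside \(\Dk N *_\prec Q\): every element of \(D'\) has well-ordered support in \(Q\), hence a \(\ol\)-minimal term, and one tracks how these behave under multiplication. The genuine obstacle is exactly this step — controlling the division closures \(D'_{N_0}\) inside the Malcev--Neumann ring — and within it the point requiring care is the case split according to whether \(N \leqslant N_0\), in which case the argument localises to a sub-Malcev--Neumann ring over the ordered quotient \(Q\), or \(N \not\leqslant N_0\), in which case one first enlarges \(N_0\) to \(N_0 N\). By contrast, Steps 1 and 2 are routine once the Linnell property is invoked; both Steps 1 and 3 are standard and treated in \cite[Lemmas 3.1 and 3.2]{OkunSchreve_DawidSimplified}.
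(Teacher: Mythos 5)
Your proposal follows the same route the paper itself points to: the paper gives no proof but defers to \cite[Lemmas 3.1 and 3.2]{OkunSchreve_DawidSimplified}, and your three steps reconstruct exactly the structure of those lemmas (crossed-product realisation inside $\Dk G$, Ore localisation via Kropholler--Linnell--Moody, Hughes-freeness of the division closure in the Malcev--Neumann ring plus Hughes's uniqueness theorem), while correctly flagging that the leading-term analysis in the Malcev--Neumann ring is where the real work lies. One small imprecision in Step 1: the property you actually need is that $\Dk N \otimes_{kN} kG \to \Dk G$ is injective (sometimes called \emph{strong Hughes-freeness}, also a consequence of Gr\"ater's results), not quite the Linnell property as stated in \cref{rem:Linnell}, which concerns $\mathcal D_H \otimes_{kH} \mathcal D \to \mathcal D$; one cannot pass directly from the latter to the former without knowing $\mathcal D_N$ is flat over $kN$, so it is cleaner to invoke strong Hughes-freeness by name.
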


Thus, a short exact sequence \(\ses{N}{G}{Q}\), where \(G\) is locally indicable and \(\Dk{G}\) exists, and \(Q\) is amenable and has an order \(\ol\) determines an embedding \(\iota_\prec \colon \Dk{G} \hookrightarrow \Dk{N} *\prec Q\). In this way, we can represent the elements of \(\Dk{G}\) as power series over \(Q\) with coefficients in \(\Dk{N}\).

\subsection{Summary}

We want to relate the constructions of the previous two subsections. Let \(k\) be a field and \(\ses{N}{G}{Q}\) be a short exact sequence of groups such that \(G\) is locally indicable, \(Q\) is torsion-free nilpotent, and \(\Dk{G}\) exists. Moreover, let \(\chi\) be a multicharacter on \(Q\), and \(\ol\) be an order on \(Q\). By \cref{lem:novikov_series,prop:ore_HF_MN}, there are inclusions
\[
    \Dk{G} \xhookrightarrow{\iota_\prec} \Dk{N} *_\prec Q \hookrightarrow \Dk{N} * \llbracket Q \rrbracket \hookleftarrow \cnov{\Dk{N} * Q}
\]
of \(\Dk{N}*Q\)-bimodules, which fit into the commutative diagram of \(kG\)-modules in \cref{fig:big_diagram}. All the maps in the diagram that do not have codomain \(\Dk{N}*\llbracket Q\rrbracket\) are ring homomorphisms. Thus, when an order \(\prec\) on \(Q\) is fixed, we will think of all the rings appearing above as subset of the common environment \(\Dk{N} * \llbracket Q \rrbracket\). For example, it makes sense to ask whether an element of \(\Dk{G}\) lies in \(\cnov{kG}\), once an order on \(Q\) is fixed.

\begin{figure}[ht]
    \[
    \begin{tikzcd}
        {kG} \arrow[r, "\cong", hook] & {kN*Q} \arrow[d, hook] \arrow[r, hook] & \cnov{kG} = \cnov{kN * Q} \arrow[r,hook] & \cnov{\Dk{N}*Q} \arrow[dd, hook] \\
        & \Dk{N}*Q \arrow[d, hook] & & \\
        \Dk{G} \arrow[r, "\cong", hook] \arrow[rr, "\iota_\prec"', bend right=25] & \Ore(\Dk{N}*Q) \arrow[r, hook] & \Dk{N}*_\prec Q \arrow[r, hook] & \Dk{N}*\llbracket Q\rrbracket
    \end{tikzcd}
    \]
    \caption{A commutative diagram summarising the various inclusions between the rings and modules introduced above.}
    \label{fig:big_diagram}
\end{figure}

\subsection{\texorpdfstring{\(\Dk{G}\)}{DkG}-Betti numbers}

Hughes-free embeddings \(kG \hookrightarrow \Dk{G}\) give rise to powerful homological invariants of \(G\). Indeed, in this situation, \(\Dk{G}\) becomes a \(kG\)-bimodule, and we can compute the group homology and cohomology
\[
    \H_n(G; \Dk{G}) = \Tor_n^{kG}(\Dk{G}, k) \qquad \text{and} \qquad \H^n(G; \Dk{G}) = \Ext_{kG}^n(k, \Dk{G}).
\]
Since \(\Dk{G}\) is a division ring, any module over \(\Dk{G}\) is free and has a well-defined dimension (by the same argument that vector spaces have well-defined dimensions). Thus, we can define the \(\Dk{G}\)-Betti numbers
\[
    \btwo{n}(G;k) = \dim_{\Dk{G}} \H_n(G; \Dk{G}) \qquad \text{and} \qquad b_{(2)}^n(G;k) = \dim_{\Dk{G}} \H^n(G; \Dk{G}).
\]
The Strong Atiyah Conjecture (over \(\C\)) holds for all locally indicable groups by \cite{JaikinZapirain2020THEUO}, so \(\mathcal D_{\Q G}\) exists for all locally indicable groups \(G\) and coincides with the Linnell division ring of \(G\) by \cite{LinnellDivRings93}. Thus, \(\btwo{n}(G;\Q)\) (resp.~\(b_{(2)}^n(G;\Q)\)) equals the usual \(\ell^2\)-Betti number \(\btwo{n}(G)\) (resp.~\(b_{(2)}^n(G)\)). When \(k\) is of positive characteristic, we view the invariants \(\btwo{n}(G;k)\) as positive characteristic analogues of \(\ell^2\)-Betti numbers.

The following properties will be useful.

\begin{prop}
    Let \(G\) be a locally indicable group and let \(k\) be a field such that \(\Dk{G}\) exists.
    \begin{enumerate}[label=(\arabic*)]
        \item\label{item:scale} If \(H \leqslant G\) is a subgroup of finite index, then \(\btwo{n}(H;k) = [G:H] \cdot \btwo{n}(G;k)\) for all \(n\).
        \item\label{item:eulerChar} If the trivial \(kG\)-module \(k\) admits a finite \emph{free} resolution by finitely generated \(kG\)-modules, then \(\chi(G) = \sum_{n \geqslant 0} (-1)^n \btwo{n}(G;k)\).
        \item\label{item:homol_equals_cohomol} \(\btwo{n}(G;k) = b_{(2)}^n(G;k)\) for all \(n\), provided at least one of the quantities is finite.
    \end{enumerate}
\end{prop}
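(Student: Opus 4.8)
The plan is to prove the three items essentially independently, as each is a standard homological-algebra argument once the right resolution is in hand.

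For \ref{item:scale}, I would use Shapiro's lemma together with the induction/restriction adjunction. If $H \leqslant G$ has finite index, then $\Dk{H}$ is the division closure of $kH$ in $\Dk{G}$, and by Hughes-freeness the multiplication map $\Dk{H} \otimes_{kH} kG \to \Dk{G}$ is injective (this is the Linnell property, cf.\ \cref{rem:Linnell}); since $[G:H]<\infty$ and $\Dk{H}$ is a division ring, a dimension count shows this map is in fact an isomorphism, so $\Dk{G} \cong \Dk{H} \otimes_{kH} kG$ is free of rank $[G:H]$ as a right $kH$-module and as a left $\Dk{H}$-module. Then
\[
    \H_n(G;\Dk{G}) = \Tor_n^{kG}(\Dk{G},k) \cong \Tor_n^{kH}(\Dk{G},k) \cong \Dk{H} \otimes_{kH} \Tor_n^{kH}(kG,k),
\]
where the first isomorphism is change of rings using that $\Dk{G}$ is free over $kH$ of rank $[G:H]$, and the last uses flatness of $\Dk{H}$ over $kH$. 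Taking $\dim_{\Dk{H}}$ and comparing with $\H_n(H;\Dk{H}) = \Dk{H}\otimes_{kH}\Tor_n^{kH}(kH,k)$ via the decomposition $kG \cong \bigoplus^{[G:H]} kH$ as $kH$-modules yields $\btwo{n}(H;k) = [G:H]\cdot \btwo{n}(G;k)$. The only subtlety is keeping track of left/right module structures, which is routine.

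For \ref{item:eulerChar}, let $0 \to F_m \to \dots \to F_0 \to k \to 0$ be a finite free resolution by finitely generated $kG$-modules. Applying $\Dk{G}\otimes_{kG}-$ gives a finite complex of finitely generated free $\Dk{G}$-modules whose homology is $\H_*(G;\Dk{G})$. Since $\Dk{G}$ is a division ring, $\dim_{\Dk{G}}$ is additive on short exact sequences of $\Dk{G}$-modules, so the Euler characteristic of the complex equals the alternating sum of the dimensions of its homology; that is, $\sum_n (-1)^n \rk_{kG}(F_n) = \sum_n (-1)^n \btwo{n}(G;k)$. The left-hand side is $\chi(G)$ by definition (the rank of a finite free resolution is independent of the resolution, e.g.\ by comparing with any $\Dk{G}$-division-ring rank function). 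This is immediate once \ref{item:scale}-type bookkeeping is accepted.

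For \ref{item:homol_equals_cohomol}, I would invoke the standard duality between $\Tor$ and $\Ext$ over a group ring with coefficients in a division ring that is a bimodule. Concretely, if $b_{(2)}^n(G;k)$ is finite, one builds a partial projective resolution of $k$ by finitely generated projective $kG$-modules up through degree $n$ (which exists because finiteness of $\dim_{\Dk{G}}\H^n$ forces the relevant $\Ext$ to be accessible via finitely generated projectives up to that stage — here one needs a standard argument that $\Dk{G}$-dimension being finite propagates a finiteness condition); then $\Hom_{kG}(P_\bullet, \Dk{G}) \cong \Dk{G}\otimes_{kG} \overline{P_\bullet}$, where $\overline{(-)}$ is the dual $kG$-module, using that $\Dk{G}$ is a self-dual bimodule over the division ring $\Dk{G}$ and that $\Hom_{kG}(P,\Dk{G}) \cong \overline P \otimes_{kG}\Dk{G}$ for $P$ finitely generated projective. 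Comparing dimensions on the two sides gives $\btwo{n}(G;k) = b_{(2)}^n(G;k)$.

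The main obstacle I expect is item \ref{item:homol_equals_cohomol} under the stated hypothesis that only \emph{one} of the two quantities is known to be finite: one cannot simply dualise a resolution by \emph{finitely generated} projectives because no such global finiteness is assumed. The fix is to argue degree-by-degree — finiteness of, say, $b_{(2)}^n(G;k)$ lets one choose a resolution that is finitely generated in degrees $\leqslant n$, because the relevant cycles/boundaries span a finite-dimensional $\Dk{G}$-subspace — and then perform the duality only in that range; making this truncation argument precise (and symmetric in homology vs.\ cohomology) is the one place requiring genuine care rather than bookkeeping. All three proofs are short and standard, so I would present them compactly, citing \cite{Luck02} for the analogous $\ell^2$ statements where convenient.
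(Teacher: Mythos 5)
Items \ref{item:scale} and \ref{item:eulerChar} are fine and match what the paper has in mind: for \ref{item:scale} the paper points to the Linnell property of the embedding and cites a detailed proof elsewhere, and your Shapiro-lemma argument via $\Dk{G}\cong\Dk{H}\otimes_{kH}kG$ is exactly that argument; for \ref{item:eulerChar} the paper calls it standard, and your alternating-sum-of-ranks argument is the standard one.

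For \ref{item:homol_equals_cohomol} your route diverges from the paper's and contains a genuine gap. The paper proves the identity
\[
    \H^n(G;\Dk{G}) \;\cong\; \Hom_{\Dk{G}}\bigl(\H_n(G;\Dk{G}),\,\Dk{G}\bigr)
\]
directly by a universal-coefficients argument over the division ring $\Dk{G}$, and this requires \emph{no} finiteness hypothesis on the resolution. The point you are missing is that for an arbitrary (not necessarily finitely generated) free $kG$-module $P = kG^{(I)}$ one still has a natural isomorphism
\[
    \Hom_{kG}(P,\Dk{G}) \;\cong\; \Dk{G}^{I} \;\cong\; \Hom_{\Dk{G}}\bigl(\Dk{G}\otimes_{kG}P,\,\Dk{G}\bigr),
\]
so the cochain complex $\Hom_{kG}(P_\bullet,\Dk{G})$ is literally the $\Dk{G}$-dual of the chain complex $\Dk{G}\otimes_{kG}P_\bullet$. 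Since $\Dk{G}$ is a division ring, every chain complex of $\Dk{G}$-modules splits, and dualising commutes with taking (co)homology; hence the displayed identity holds with no finiteness assumptions at all. The equality of dimensions then follows immediately because a $\Dk{G}$-vector space and its dual have equal dimension precisely when one of them is finite-dimensional.

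Your proposed fix — arguing that finiteness of $b_{(2)}^n(G;k)$ ``lets one choose a resolution that is finitely generated in degrees $\leqslant n$'' — is not a valid step: finiteness of a single $\Dk{G}$-Betti number does not produce any finite-generation property of a resolution of the trivial module (the dimension of a homology group says nothing about the size of the free modules in any particular resolution). So the truncation you envision cannot be carried out, and in any case it is unnecessary once one notes the Hom/tensor compatibility above. If you prefer to think of it as a dualisation, the correct statement is that one dualises over $\Dk{G}$ after base change, not over $kG$ before — and that operation is well-behaved for arbitrary index sets.
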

\begin{proof}
    \cref{item:scale} follows easily from the fact that \(kG \hookrightarrow \Dk{G}\) is a Linnell embedding \cite[Corollary 8.3]{Grater20} (see \cite[Lemma 6.3]{Fisher_Improved} for a detailed proof). \cref{item:eulerChar} is a standard argument in homological algebra. Similarly, a standard homological algebra argument shows that \(\H^n(G;\Dk{G}) = \Hom_{\Dk{G}}(\H_n(G; \Dk{G}), \Dk{G})\), whence \cref{item:homol_equals_cohomol} follows. \qedhere
\end{proof}

We conclude this section with a division ring version of Gaboriau's theorem \cite[Th\'eor\`eme 6.6]{Gaboriau2002}, which shows that the invariants \(\btwo{n}(G;k)\) are obstructions to fibring and to the existence of low cohomological dimension infinite-index normal subgroups \(N\) with amenable quotients \(G/N\). Note that the result is proved in \cite[Theorem 6.4]{Fisher_Improved} in the case where the quotient is \(\Z\); the proof is similar.

\begin{prop}\label{prop:divring_gaboriau}
    Let \(\ses{N}{G}{Q}\) be a short exact sequence, where \(G\) is locally indicable, \(Q\) is infinite and amenable, and \(k\) is a field such that \(\Dk{G}\) exists. If \(\btwo{n}(N;k) < \infty\), then \(\btwo{n}(G;k) = 0\).
\end{prop}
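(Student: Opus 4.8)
The plan is to reduce the statement to the case $Q = \Z$, which is already handled by \cite[Theorem 6.4]{Fisher_Improved}, by exhausting the amenable group $Q$ by a chain of finitely generated subgroups and using the continuity properties of $\Dk{-}$-Betti numbers. First I would recall that since $Q$ is infinite and amenable, it contains an infinite finitely generated subgroup; and any infinite finitely generated amenable group admits either a finite-index subgroup surjecting onto $\Z$ or, more usefully, a decreasing chain of finite-index subgroups with infinite amenable intersection — but the cleanest route is different. Instead, I would use that $Q$ has a quotient that is either $\Z$ or, failing that, work with the observation that $G$ itself is locally indicable hence has a quotient $\Z$; but the hypothesis ties $\btwo{n}$ to the specific kernel $N$, so the structure of $Q$ matters.

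The actual approach: since $Q$ is infinite amenable, pick an infinite finitely generated subgroup $Q_1 \leqslant Q$ and let $G_1$ be its preimage in $G$, so $N \leqslant G_1 \leqslant G$ with $[G:G_1]$ possibly infinite. The key reduction is that $Q_1$, being infinite, finitely generated, and amenable, has a finite-index subgroup $Q_2$ admitting an epimorphism onto $\Z$ (every infinite finitely generated amenable — indeed every infinite finitely generated virtually solvable, but amenable suffices via the fact that an infinite finitely generated amenable group has a finite-index subgroup with infinite abelianization, hence surjects onto $\Z$). Passing to the corresponding finite-index preimage $G_2 \leqslant G_1$, we get a short exact sequence $1 \to N_2 \to G_2 \to \Z \to 1$ where $N_2 \leqslant G_1$ has finite index over $N$ inside... no — here I must be careful: $N_2 = \ker(G_2 \onto \Z)$ contains $N$ with quotient $N_2/N = \ker(Q_2 \onto \Z)$, which is infinite amenable again, not finite. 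So this single step does not terminate; one needs to iterate or argue directly.

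So the honest plan is: since $\btwo{n}(N;k) < \infty$, by \cref{prop:ore_HF_MN}(1) we have $\Dk{G} \cong \Ore(\Dk{N} * Q)$ for a suitable crossed product, and $\H_n(G;\Dk{G}) \cong \Dk{G} \otimes_{\Dk{N}*Q} \H_n(N;\Dk{N}) $ — more precisely, using the Lyndon--Hochschild--Serre spectral sequence with $\Dk{G}$-coefficients together with the flatness of $\Dk{G}$ over $kG$ (and over $\Dk{N}*Q$, since $\Ore$ localisation is flat), one gets $\H_n(G;\Dk{G}) \cong \Dk{G} \otimes_{\Dk{N}*Q} (\Dk{N}*Q \otimes_{\Dk{N}} \H_n(N;\Dk{N}))$. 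The module $V := \H_n(N;\Dk{N})$ is a finite-dimensional $\Dk{N}$-vector space by hypothesis, and it carries a compatible $Q$-action, making $\Dk{N}*Q \otimes_{\Dk{N}} V$ a finitely generated module over the Ore domain $\Dk{N}*Q$; tensoring up to its Ore field $\Dk{G}$ gives a $\Dk{G}$-vector space whose dimension is the torsion-free rank of this module over $\Dk{N}*Q$. The point is then that $\Dk{N}*Q \otimes_{\Dk{N}} V$, being finitely generated over a crossed product $\Dk{N}*Q$ with $Q$ infinite amenable, is a torsion module — equivalently has rank $0$ — because $Q$ infinite forces $\Dk{N}*Q$ to be infinite-dimensional over $\Dk{N}$ while the module is generated by the finite-dimensional $V$; concretely, any finitely generated $\Dk{N}*Q$-submodule of a $\Dk{N}$-finite-dimensional space is a torsion $\Dk{N}*Q$-module since $\Dk{N}*Q$ is an infinite-dimensional $\Dk{N}$-algebra with no zero divisors.

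\emph{The main obstacle} I anticipate is justifying that last rank-zero claim cleanly: one must show that a nonzero finitely generated module over the Ore domain $\Dk{N}*Q$ that is finite-dimensional as a $\Dk{N}$-module is necessarily torsion (i.e.\ killed by the Ore localisation). For $Q = \Z$ this is immediate — $\Dk{N}[t^{\pm 1}]$ is a PID and a finite-dimensional $\Dk{N}$-module is torsion because $t$ acts with a characteristic polynomial — and the general amenable case follows by choosing an element $q \in Q$ of infinite order (which exists since infinite finitely generated amenable groups, hence $Q$ after passing to a suitable subgroup, contain $\Z$; and $V$ restricted to $\langle q\rangle \cong \Z$ is still $\Dk{N}$-finite-dimensional, so already $\Dk{N}*\langle q\rangle \otimes V$ is $\langle q\rangle$-torsion, whence the bigger module is $Q$-torsion after inverting). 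I would fill in: an infinite amenable group has an element of infinite order or an infinite locally finite subgroup — but the latter cannot occur inside $Q$ here since $G$, being locally indicable, is torsion-free, so $Q$ is torsion-free; hence $Q$ has an element of infinite order, and the restriction-of-scalars argument to $\Z = \langle q \rangle \leqslant Q$ closes the proof exactly as in \cite[Theorem 6.4]{Fisher_Improved}.
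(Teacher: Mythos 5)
Your overall strategy --- reducing the vanishing of \(\H_n(G;\Dk{G})\) to a module-theoretic torsion claim via flatness of \(\Dk{G}\) over \(\Dk{N}*Q\) (Ore localisation is flat) --- is a valid and somewhat more conceptual alternative to the paper's proof, which works directly with cycles and boundaries. However, there are two concrete errors in the execution.

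First, the formula \(\H_n(G;\Dk{G}) \cong \Dk{G} \otimes_{\Dk{N}*Q} (\Dk{N}*Q \otimes_{\Dk{N}} V)\) is wrong, and the error is fatal. The correct identity, which you in fact wrote first before ``sharpening'' it, is simply \(\H_n(G;\Dk{G}) \cong \Dk{G} \otimes_{\Dk{N}*Q} \H_n(N;\Dk{N})\), where \(\H_n(N;\Dk{N})\) already carries its natural \(\Dk{N}*Q\)-module structure as the homology of the complex \(\Dk{N} \otimes_{kN} C_\bullet\) for a free \(kG\)-resolution \(C_\bullet\) of \(k\). The spurious induction \(\Dk{N}*Q \otimes_{\Dk{N}} V\) is a \emph{free} \(\Dk{N}*Q\)-module of rank \(\dim_{\Dk{N}} V\); it is certainly not torsion and is not \(\Dk{N}\)-finite-dimensional (its \(\Dk{N}\)-dimension is \(|Q|\cdot\dim_{\Dk{N}} V\)), so the dimension count you later invoke does not apply to it. Tensoring that module up to \(\Dk{G}\) produces \(\Dk{G}^{\dim_{\Dk{N}} V}\), which is precisely the nonvanishing the proposition is trying to rule out.

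Second, your justification for an element of infinite order in \(Q\) rests on the claim that \(Q\) is torsion-free because \(G\) is locally indicable. That is false: quotients of torsion-free groups can have torsion, and indeed there are infinite amenable torsion quotients of locally indicable groups (the first Grigorchuk group is an infinite, amenable, torsion quotient of \(F_3\)). Fortunately the whole detour through \(\Z\)-subgroups and PIDs is unnecessary: once the module is corrected to \(V = \H_n(N;\Dk{N})\) itself, the torsion claim holds for \emph{any} infinite \(Q\) by the dimension count you already stated but misapplied. If \(v \in V\) had trivial annihilator, then \((\Dk{N}*Q)\cdot v \leqslant V\) would be free of rank one over the domain \(\Dk{N}*Q\), hence of \(\Dk{N}\)-dimension \(|Q| = \infty\), contradicting \(\dim_{\Dk{N}} V < \infty\). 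This is exactly the count the paper performs on the cycle \(\alpha c\): its \(\Dk{N}*Q\)-span is \(\Dk{N}\)-infinite-dimensional and so must meet the boundaries nontrivially. With the two fixes above, your argument becomes a correct repackaging of the paper's proof at the level of homology modules rather than cycles.
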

\begin{proof}
    Let \(C_\bullet \rightarrow k \rightarrow 0\) be a free resolution by \(kG\)-modules. Since \(\Dk{G}\) is Linnell, we have identifications \(\Dk{G} = \Ore(\Dk{N} * Q)\) and
    \[
        \Dk{N} \otimes_{kN} C_n \cong \Dk{N} \otimes_{kN} \bigoplus_{I_n} kG \cong \bigoplus_{I_n} \Dk{N} \otimes_{kN} kG \cong \bigoplus_{I_n} \Dk{N} * Q
    \]
    for some index set \(I_n\), for each integer \(n\). Thus, we have inclusions of chain complexes
    \[
        \begin{tikzcd}
            \cdots \arrow[r] & \bigoplus_{I_{n+1}} \Dk{N} * Q \arrow[d, hook] \arrow[r] & \bigoplus_{I_n} \Dk{N} * Q \arrow[d, hook] \arrow[r] & \bigoplus_{I_{n-1}} \Dk{N} * Q \arrow[d, hook] \arrow[r] & \cdots \\
            \cdots \arrow[r] & \bigoplus_{I_{n+1}} \Dk{G} \arrow[r] & \bigoplus_{I_n} \Dk{G} \arrow[r] & \bigoplus_{I_{n-1}} \Dk{G} \arrow[r] & \cdots \nospacepunct{,}
        \end{tikzcd}
    \]
    where the upper chain complex computes \(\H_\bullet(N;\Dk{N})\) and the lower chain complex computes \(\H_\bullet(G; \Dk{G})\).

    For any \(c \in Z_n(\bigoplus_{I_\bullet} \Dk{G})\) be an \(n\)-cycle, the Ore condition ensures there is some \(\alpha \in \Dk{N} * Q\) such that \(\alpha c \in \bigoplus_{I_\bullet} \Dk{N} * Q\). Then \((\Dk{N}*Q)\cdot \alpha c\) is an infinite-dimensional \(\Dk{N}\)-subspace of \(Z_n(\bigoplus_{I_\bullet} \Dk{N} * Q)\). But \(\btwo{n}(N;k) < \infty\), so there must be some \(d \in \bigoplus_{I_{n+1}} \Dk{N} * Q\) such that \(\partial d = \beta \cdot \alpha c\) for some nonzero \(\beta \in \Dk{N} * Q\). Hence, \(\partial ((\beta \alpha)\inv d) = c\), which proves that \(\btwo{n}(G;k) = 0\). \qedhere
\end{proof}





\section{Iterated fractions}

We fix some notations that will be used throughout the section.

\begin{notation}
    Let \(\ses{N}{G}{Q}\) be a fixed short exact sequence of groups, where \(G\) is locally indicable and \(\Dk{G}\) exists, and \(Q\) is torsion-free nilpotent with a fixed order \(\ol\). By \cref{lem:all_orders_are_lex}, there exists a central series
    \[
        1 = Q_n \leqslant \dots \leqslant Q_0 = Q
    \]
    such that every successive quotient \(Q_i/Q_{i+1}\) has an order \(\ol_i\), and \(\ol\) is the lexicographic order corresponding to the orders \(\ol_i\). More precisely, for nontrivial \(t \in Q\), this means that \(1 \prec t\) if and only if \(1 \prec_i \overline t\) in \(Q_i/Q_{i+1}\) where \(i\) is maximal such that \(t \in Q_i\).
\end{notation}

Let \(G_i \leqslant G\) be the preimage of \(Q_i\) in \(G\) under the quotient map. Consider the division rings defined by
\[
    \mathcal D_n = \Dk{N} \quad \text{and} \quad \mathcal D_i = \Ore(\mathcal D_{i+1} * Q_i/Q_{i+1}).
\]
By reverse induction, it is easy to see that \(\mathcal D_i \cong \Dk{G_i}\). Hence, \(\Dk{G} = \bigcup_{i=0}^n \mathcal D_i\).

\begin{defn}[Iterated fractions]\label{def:iterated_fraction}
    Let \(f \in \Dk{G}\) and let \(i\) be the maximal integer such that \(f \in \mathcal D_i\). If \(i = n\), then we (trivially) say that \(f\) is an \emph{iterated fraction} for itself. If \(i < n\), then an \emph{iterated fraction} \(\mathfrak f\) for \(f\) consists of the following data:
    \begin{enumerate}
        \item a pair of elements \(\alpha, \beta\) in \(\mathcal D_{i+1} * Q_i/Q_{i+1}\), where \(\beta \neq 0\) and \(f = \alpha \beta\inv\), and
        \item iterated fractions for every coefficient of \(\alpha\) and \(\beta\).
    \end{enumerate}
    In this situation, we say that \(\mathfrak f\) \emph{represents} \(f\) and, by abuse of notation, we often write \(\mathfrak f = (\alpha, \beta)\).
\end{defn}

\begin{defn}[Compatibility]
    Let \(f \in \Dk{G}\), let \(\mathfrak{f}\) be an iterated fraction for \(f\), and let \(\chi = (\chi_i)_{i=0}^{n-1}\) be a multicharacter on \(Q\) associated to the central series \(1 = Q_n \leqslant \dots \leqslant Q_0 = Q\). Let \(i\) be the maximal integer such that \(f \in \mathcal D_i\).
    \begin{enumerate}
        \item If \(i = n\), then \(\chi\) and \(\mathfrak f\) are (trivially) \emph{compatible}.
        \item If \(i < n\) and \(\mathfrak f = (\alpha, \beta)\), then \(\mathfrak f\) and \(\chi\) are \emph{compatible} if \(\chi_i \colon Q_i/Q_{i+1} \rightarrow \R\) strictly preserves the order on \(\supp(\alpha) \cup \supp(\beta)\) (where \(Q_i/Q_{i+1}\) has the order \(\ol_i\) and \(\R\) has its standard order) and \(\chi\) is compatible with the iterated fraction of every coefficient of \(\alpha\) and \(\beta\).
    \end{enumerate}
\end{defn}

The following lemma is a generalised version of \cite[Lemma 2.5]{OkunSchreve_DawidSimplified}.

\begin{lem}\label{lem:orderable_Abelian_maps_to_R}
    Let \(\Gamma\) be an ordered Abelian group and suppose that \(g_1 \prec \cdots \prec g_n\) is a finite increasing sequence of elements in \(\Gamma\). Then there is a homomorphism \(\varphi \colon \Gamma \rightarrow \R\) such that \(\varphi(x_1) < \dots < \varphi(x_n)\).
\end{lem}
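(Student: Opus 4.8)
The plan is to reduce first to a finitely generated subgroup and then to a combinatorial statement about orders on \(\Z^m\). Since \(\R\) is divisible, it is injective as an abelian group, so any homomorphism to \(\R\) defined on a subgroup of \(\Gamma\) extends to all of \(\Gamma\). Hence we may replace \(\Gamma\) by \(\Gamma_0 = \langle g_1, \dots, g_n \rangle\) and assume \(\Gamma\) is finitely generated; being orderable, \(\Gamma\) is torsion-free, so \(\Gamma \cong \Z^m\) for some \(m\). Using additive notation, put \(t_i = g_{i+1} - g_i\) for \(1 \leqslant i \leqslant n-1\); each \(t_i\) is a positive element of \(\Gamma\), and it suffices to produce a homomorphism \(\varphi \colon \Gamma \to \R\) with \(\varphi(t_i) > 0\) for all \(i\), since then \(\varphi(g_1) < \dots < \varphi(g_n)\).

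For the finitely generated case I would decompose the order into its Archimedean pieces. Let \(0 = C_0 < C_1 < \dots < C_r = \Gamma\) be the chain of all convex subgroups of \(\Gamma\); it is finite because the rational rank strictly increases along it. Convexity forces each quotient \(\Gamma/C_j\) to be torsion-free, hence free, so each \(C_j\) is a direct summand, and we may fix a splitting \(\Gamma = \bigoplus_{j=1}^{r} B_j\) with \(C_j = \bigoplus_{i \leqslant j} B_i\); write \(\pi_j \colon \Gamma \to B_j\) for the projection. Since nothing lies strictly between \(C_{j-1}\) and \(C_j\), the quotient \(C_j/C_{j-1} \cong B_j\) has no nontrivial proper convex subgroup, hence is Archimedean, so Hölder's theorem gives an order-embedding \(\psi_j \colon B_j \to \R\). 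If \(t \in \Gamma\) is positive and \(j(t)\) is minimal with \(t \in C_{j(t)}\), then \(\pi_i(t) = 0\) for \(i > j(t)\) while \(\psi_{j(t)}(\pi_{j(t)}(t)) > 0\). As \(\{t_1, \dots, t_{n-1}\}\) is finite, there are constants \(0 < \delta \leqslant M\) with \(\psi_{j(t_i)}(\pi_{j(t_i)}(t_i)) \geqslant \delta\) and \(\abs{\psi_\ell(\pi_\ell(t_i))} \leqslant M\) for all relevant \(i\) and \(\ell\). Choosing weights \(0 < \lambda_1 \ll \dots \ll \lambda_r\) subject to \(r \lambda_{j-1} M < \lambda_j \delta\) for all \(j\), the homomorphism \(\varphi = \sum_{j=1}^{r} \lambda_j (\psi_j \circ \pi_j)\) satisfies \(\varphi(t_i) \geqslant \lambda_{j(t_i)} \delta - r \lambda_{j(t_i)-1} M > 0\) for every \(i\). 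Extending \(\varphi\) to the original \(\Gamma\) as in the first paragraph completes the argument.

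The main obstacle is the uniformity in the final estimate: a single choice of weights \(\lambda_j\) must make every \(\varphi(t_i)\) positive at once, and this is precisely where finiteness of the increasing sequence is used—for an infinite sequence no such \(\varphi\) need exist. An alternative to invoking Hölder's theorem goes through convex geometry: in \(\Gamma \otimes_{\Z} \R \cong \R^m\) the origin lies outside the convex hull of \(\{t_1, \dots, t_{n-1}\}\), because a nonnegative real linear relation among the \(t_i\) can be chosen with rational coefficients, hence integral after clearing denominators, and a nonzero nonnegative integral combination of positive elements of an ordered group is positive rather than zero; a separating hyperplane then yields the required functional. Either route is just an explicit form of the lexicographic structure recorded in \cref{lem:all_orders_are_lex}.
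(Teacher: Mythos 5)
Your proof is correct and takes a genuinely different, more self-contained route than the paper's. The paper reduces to a $\Q$-vector space via the divisible hull $\Gamma \otimes \Q$ and then simply asserts, without further argument, that the finitely generated group $A = \langle x_1, \dots, x_n \rangle$ admits a map to $\R$ strictly preserving the order on the $x_i$ (implicitly deferring to the Okun--Schreve lemma of which this one is said to be a generalisation), before extending to all of $\Gamma$ via a complementary subspace or Zorn's Lemma. You instead reduce to the finitely generated case more directly, via injectivity of $\R$ as an abelian group, and then you actually prove that case: you filter $\Gamma_0 \cong \Z^m$ through its finite chain of convex subgroups, apply H\"older's theorem to each Archimedean successive quotient, and choose rapidly growing weights $\lambda_j$ so that the leading term dominates in each $\varphi(t_i)$. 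The separating-hyperplane alternative, with its rational-cone argument showing that $0$ does not lie in the convex hull of $t_1, \dots, t_{n-1}$, is also sound. Your approach buys a fully worked proof of the step the paper elides, at the modest cost of invoking H\"older's theorem (or convex geometry), whereas the paper's version is shorter but relies on the cited reference for the crucial finitely generated case. One cosmetic point: in the constraint $r\lambda_{j-1}M < \lambda_j\delta$ for all $j$, the case $j=1$ should be read with $\lambda_0 = 0$, which is harmless since for $j(t_i)=1$ only the $\ell=1$ term appears in $\varphi(t_i)$.
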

\begin{proof}
    The \emph{divisible hull} of \(\Gamma\) is the ordered group \(\Gamma \otimes \Q\), where \(g \otimes \frac{1}{m} \prec h \otimes \frac{1}{n}\) if and only if \(ng \prec mh\). The map \(g \mapsto g \otimes 1\) is an order-preserving embedding \(\Gamma \hookrightarrow \Gamma \otimes \Q\) (in fact, \(\Gamma \otimes \Q\) is the unique minimal orderable divisible group that contains \(\Gamma\) as an ordered subgroup). Thus, it suffices to assume that \(\Gamma\) has a \(\Q\)-vector space structure.

    Let \(V \leqslant \Gamma\) be the \(\Q\)-subspace generated by \(x_1, \dots, x_n\), and let \(A \subseteq V\) be the group generated by \(x_1, \dots, x_n\). Then \(A\) admits a map to \(\R\) that strictly preserves the order on \(\{x_1, \dots, x_n\}\). This map uniquely extends to a homomorphism \(V \rightarrow \R\), which can then be extended to all of \(\Gamma\) by setting it to be zero on a complementary subspace of \(V\).  Alternatively, one could use Zorn's Lemma. \qedhere
\end{proof}

\begin{lem}\label{lem:compatible_character}
    Let \(f_1, \dots, f_l \in \Dk{G}\) be a finite collection of elements and let \(\mathfrak f_j\) be an iterated fraction for \(f_j\) for each \(j = 1, \dots, l\). Then there is a multicharacter on \(Q\) associated to the central series \(1 = Q_n \leqslant \dots \leqslant Q_0  = Q\) that is compatible with each \(\mathfrak f_j\).
\end{lem}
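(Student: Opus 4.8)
The plan is to exploit the fact that, by \cref{def:iterated_fraction}, each iterated fraction \(\mathfrak f_j\) is a finite rooted tree, and that moving from a node to one of its children (an iterated fraction for one of the coefficients) strictly increases the level \(i\) at which one works. Hence at each fixed level only finitely many finite subsets of \(Q_i/Q_{i+1}\) are relevant, and feeding their union into \cref{lem:orderable_Abelian_maps_to_R} produces a single component \(\chi_i\) that is order-preserving on all of them at once.

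First I would record that an iterated fraction is genuinely a finite object. If \(f \in \mathcal D_i\) with \(i\) maximal and \(i < n\), then \(\mathfrak f = (\alpha, \beta)\) with \(\alpha, \beta \in \mathcal D_{i+1} * Q_i/Q_{i+1}\) together with an iterated fraction for every coefficient of \(\alpha\) and of \(\beta\). Each such coefficient lies in \(\mathcal D_{i+1}\), so its associated level is at least \(i+1\); thus the recursion strictly increases the level and must terminate (at level \(n\)) after at most \(n\) steps. Since \(\alpha\) and \(\beta\) have finite support, each node has only finitely many children, so the tree underlying \(\mathfrak f_j\) is finite.

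Now fix \(i \in \{0, \dots, n-1\}\) and let \(S_i \subseteq Q_i/Q_{i+1}\) be the union, over all \(j \in \{1, \dots, l\}\) and all nodes \((\alpha, \beta)\) of \(\mathfrak f_j\) sitting at level \(i\), of the sets \(\supp(\alpha) \cup \supp(\beta)\). By the previous paragraph this is a finite set. Listing \(S_i\) in increasing order with respect to \(\prec_i\) and applying \cref{lem:orderable_Abelian_maps_to_R} to the ordered Abelian group \(Q_i/Q_{i+1}\) yields a homomorphism \(\chi_i \colon Q_i/Q_{i+1} \to \R\) that strictly preserves \(\prec_i\) on \(S_i\), hence on every set \(\supp(\alpha) \cup \supp(\beta)\) occurring at level \(i\). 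Set \(\chi = (\chi_i)_{i=0}^{n-1}\); this is a multicharacter on \(Q\) associated to the central series \(1 = Q_n \leqslant \dots \leqslant Q_0 = Q\). That \(\chi\) is compatible with each \(\mathfrak f_j\) then follows by induction down the tree \(\mathfrak f_j\) from the recursive definition of compatibility: at a node at level \(i\) the support \(\supp(\alpha) \cup \supp(\beta)\) is contained in \(S_i\), on which \(\chi_i\) is strictly order-preserving, and by the inductive hypothesis \(\chi\) is compatible with the iterated fractions of all the coefficients of \(\alpha\) and \(\beta\).

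There is no serious obstacle here; the only point that needs care is the finiteness bookkeeping — namely checking that the recursion in \cref{def:iterated_fraction} both terminates (because the level strictly increases) and is finitely branching (because supports in a crossed product are finite), so that each \(S_i\) is indeed a finite subset of \(Q_i/Q_{i+1}\) to which \cref{lem:orderable_Abelian_maps_to_R} applies.
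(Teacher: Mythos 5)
Your proposal is correct and rests on the same two key ingredients as the paper's proof: the finiteness of the tree underlying each iterated fraction (finite branching because supports in crossed products are finite, and finite depth because the level strictly increases down the tree) together with \cref{lem:orderable_Abelian_maps_to_R}. The organization is slightly different: the paper runs a reverse induction on the minimal level \(i\) at which the collection of iterated fractions lives, producing the components \(\chi_{i+1}, \dots, \chi_{n-1}\) by the inductive hypothesis applied to the coefficients, then choosing \(\chi_i\) to order the supports of the top-level pairs and filling in the remaining components arbitrarily; you instead gather, for each fixed level \(i\), the finite union \(S_i\) of all supports occurring at that level across every tree, build all the \(\chi_i\) at once, and then verify compatibility by an induction down the trees. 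These are the same argument up to bookkeeping; your version makes the finiteness of the relevant subsets of each \(Q_i/Q_{i+1}\) more explicit, which is a small gain in clarity, while the paper's reverse induction keeps the structure parallel to the recursive definitions of iterated fraction and compatibility.
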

\begin{proof}
    Let \(i\) be maximal such that \(f_j \in \mathcal D_i\) for all \(j = 1, \dots, l\). If \(i = n\), then every multicharacter is trivially compatible with the iterated fractions \(\mathfrak f_j\). Now suppose that \(i < n\), let \(\mathfrak f_j = (\alpha_j, \beta_j)\), and let \(F_j = \{\mathfrak f_{j,k}\}\) be the set of iterated fractions representing the (nonzero) coefficients of \(\alpha_j\) and \(\beta_j\). By induction, there is a multicharacter \((\chi_{i+1}, \dots, \chi_n)\) that is compatible with every iterated fraction in \(\bigcup_j F_j\). Let \(\chi_i \colon Q_i/Q_{i+1} \rightarrow \R\) strictly preserve the order of \(\supp(\alpha) \cup \supp(\beta)\), which exists by \cref{lem:orderable_Abelian_maps_to_R}. For \(m < i\), we let \(\chi_m \colon Q_m/Q_{m+1} \rightarrow \R\) be any character. Then \(\chi = (\chi_i)_{i=0}^{n-1}\) is compatible with each iterated fraction \(\mathfrak f_i\). \qedhere
\end{proof}

\begin{prop}\label{prop:compatible_novikov}
    Let \(\mathfrak f\) be an iterated fraction for \(f \in \Dk{G} \smallsetminus \{0\}\) and let \(\chi\) be a multicharacter on \(Q\) associated to the central series \(1 = Q_n \leqslant \dots \leqslant Q_0 = Q\). If \(\chi\) and \(\mathfrak f\) are compatible, then \(f, f\inv \in \cnov{\Dk{N}*Q}\).
\end{prop}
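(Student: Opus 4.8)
I would argue by reverse induction on the largest integer $i$ with $f \in \mathcal{D}_i$, proving the slightly stronger statement that $f$ and $f\inv$ both lie in the intermediate Novikov ring $R_i$ of \cref{def:novikov_ring} built from $\Dk{N}$ and $Q$ (with $R_n = \Dk{N}$ and $R_i = \nov{R_{i+1}*Q_i/Q_{i+1}}{\chi_i}$). Since each $R_i$ sits inside $R_{i-1}$ via the ``constant series'' inclusion $r \mapsto r u_e$, we get $R_i \subseteq R_{i-1} \subseteq \cdots \subseteq R_0 = \cnov{\Dk{N}*Q}$, so this suffices. The base case $i = n$ is immediate, as $\mathcal{D}_n = \Dk{N} = R_n$ is a division ring.

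For the inductive step I would write $\mathfrak{f} = (\alpha, \beta)$ with $\alpha, \beta \in \mathcal{D}_{i+1} * Q_i/Q_{i+1}$, $\beta \neq 0$, $f = \alpha\beta\inv$. Every coefficient $c$ of $\alpha$ or $\beta$ lies in $\mathcal{D}_{i+1}$ and carries an iterated fraction compatible with $\chi$, so the already-established cases of the induction at levels $> i$ give $c, c\inv \in R_{i+1}$; as $\alpha, \beta$ are finitely supported over $Q_i/Q_{i+1}$, they therefore belong to the crossed product $R_{i+1} * Q_i/Q_{i+1} \subseteq R_i$. Compatibility of $\mathfrak{f}$ and $\chi$ means $\chi_i$ strictly preserves $\ol_i$ on $\supp(\alpha) \cup \supp(\beta)$, so $\supp(\beta)$ has a unique element $b_0$ of smallest $\chi_i$-value, and I can factor $\beta = c_{b_0} u_{b_0}(1+\epsilon)$ with $\epsilon = (c_{b_0}u_{b_0})\inv \sum_{b \neq b_0} c_b u_b$, which is finitely supported over $Q_i/Q_{i+1}$ with coefficients in $R_{i+1}$ and satisfies $\chi_i(\supp(\epsilon)) \subseteq \R_{>0}$ (since $\chi_i(b) > \chi_i(b_0)$ for the remaining $b$). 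The geometric series $\sum_{k\geq 0}(-\epsilon)^k$ then converges in $R_i$: the sets $\supp(\epsilon^k)$ are finite and $\chi_i(\supp(\epsilon^k)) \subseteq [k\delta,\infty)$ for $\delta = \min\chi_i(\supp(\epsilon)) > 0$, so only finitely many $k$ contribute below any given $\chi_i$-level. Since $c_{b_0}\inv \in R_{i+1}$ and $u_{b_0}\inv \in R_{i+1}*Q_i/Q_{i+1}$, this yields $\beta\inv = \big(\sum_{k\geq 0}(-\epsilon)^k\big) u_{b_0}\inv c_{b_0}\inv \in R_i$ and hence $f = \alpha\beta\inv \in R_i$. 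As $f \neq 0$ forces $\alpha \neq 0$, the identical argument applied to $\alpha$ gives $\alpha\inv \in R_i$ and $f\inv = \beta\alpha\inv \in R_i$.

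The step I expect to require the most care is reconciling the inverse $\beta\inv$ (and $\alpha\inv$) computed inside $\mathcal{D}_i = \Ore(\mathcal{D}_{i+1}*Q_i/Q_{i+1})$ with the one computed inside $R_i$, i.e.\ confirming that the element of $R_i$ produced above genuinely equals the original $f$ once everything is viewed in the common ambient $\Dk{N}*\llbracket Q \rrbracket$ (into which $\mathcal{D}_i \subseteq \Dk{N}*_\prec Q$ embeds via \cref{prop:ore_HF_MN} and \cref{fig:big_diagram}, and $R_i \subseteq R_0$ embeds via \cref{lem:novikov_series}). The key observations I would use are that the partial sums $\sum_{k=0}^{K}(-\epsilon)^k$ are finitely supported elements of $R_{i+1}*Q_i/Q_{i+1}$ with coefficients in $\mathcal{D}_{i+1} \cap R_{i+1}$, and that compatibility makes the relevant supports simultaneously well-ordered with respect to $\prec$ and Novikov-finite with respect to $\chi$; thus the limit is the same series in $\Dk{N}*\llbracket Q\rrbracket$ whether formed in $\Dk{N}*_\prec Q$ or in $R_i$, and since multiplication in both rings restricts to convolution of formal series, the expression $\big(\sum_k(-\epsilon)^k\big)u_{b_0}\inv c_{b_0}\inv$ is the actual inverse of $\beta$ in $\mathcal{D}_i$. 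The algebraic core — isolating the leading term and summing the geometric series — is routine and parallels the abelian case in \cite{OkunSchreve_DawidSimplified}; the bookkeeping of keeping all the inverses inside one environment where the Novikov and Malcev--Neumann multiplications agree is where the real work lies.
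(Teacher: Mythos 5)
Your proof is correct and follows essentially the same approach as the paper's: reverse induction on the level $i$, isolating the $\chi_i$-leading term of $\beta$, inverting the remainder by a geometric series whose support stays Novikov-finite, and concluding via the nesting $R_n \subseteq \cdots \subseteq R_0 = \cnov{\Dk{N}*Q}$. You are slightly more explicit than the paper in two places, namely deriving $f^{-1} \in R_i$ by running the same argument on $\alpha$, and reconciling the inverse formed in $R_i$ with the one in $\mathcal D_i$ inside the common ambient $\Dk{N}*\llbracket Q \rrbracket$; the paper leaves both points implicit (the second relying tacitly on \cref{fig:big_diagram}).
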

\begin{proof}
    Let \(i\) be maximal such that \(f \in \mathcal D_i\). By induction on \(i\), we will show that
    \[
        f, f\inv \in \nov{\Dk{N}*Q_i}{(\chi_i, \dots, \chi_n)} =: R_i.
    \]
    If \(i = n\), then the result is trivial. Let \(\mathfrak f = (\alpha, \beta)\), where \(\alpha, \beta \in \mathcal D_{i-1} * Q_i/Q_{i-1}\). By induction, the coefficients (and their inverses) of \(\alpha\) and \(\beta\) all lie in \(R_{i+1}\). Write \(\beta = (1 - \beta_+) \beta_0\), where \(\beta_0 = r u_q\) for some \(r \in R_{i+1}^\times\) and \(q \in Q_i/Q_{i+1}\) and such that \(\chi_i\) takes positive values on \(\supp(\beta_+)\). This implies that 
    \[
        (1 - \beta_+)\inv = \sum_{m=0}^\infty \beta_+^m \ \in \ \nov{R_{i-1} * Q_i/Q_{i-1}}{\chi_i} = R_i,
    \]
    since \(\beta\), and therefore \(\beta_+\), has coefficients in \(R_{i+1}\). Therefore, 
    \[
        f = \alpha\beta\inv = \alpha \beta_0\inv \sum_{m=0}^\infty \beta_+^m \ \in \ R_i.
    \]
    The result then follows, since \(R_n \subseteq \dots \subseteq R_0 = \cnov{\Dk{N}*Q}\). \qedhere
\end{proof}

The following corollary is immediate; it gives a sufficient condition for elements of \(\Dk{G}\) to be represented in Novikov rings.

\begin{cor}\label{cor:compatible_novikov}
    If \(f \in \Dk{G} \cap (kN *_\prec Q)\) and \(\chi\) is compatible with \(\mathfrak f\), then \(f \in \cnov{kG}\).
\end{cor}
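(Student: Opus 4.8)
The plan is to deduce the corollary from \cref{prop:compatible_novikov} by an elementary comparison of coefficient rings, carried out inside the ambient module \(\Dk{N} * \llbracket Q \rrbracket\) of \cref{fig:big_diagram}. We may assume \(f \neq 0\), and \(\mathfrak f\) is an iterated fraction for \(f\). Since \(\chi\) is compatible with \(\mathfrak f\), \cref{prop:compatible_novikov} gives \(f \in \cnov{\Dk{N} * Q}\). On the other hand, the hypothesis \(f \in kN *_\prec Q\) places \(f\) inside \(kN * \llbracket Q \rrbracket \subseteq \Dk{N} * \llbracket Q \rrbracket\); equivalently, writing \(f = \sum_{q \in Q} \mu_q u_q\) as a formal series over \(Q\), all coefficients \(\mu_q\) lie in \(kN\) (the expression of an element of \(\Dk{N} * \llbracket Q \rrbracket\) as such a series is unique, so there is no conflict between the two representations of \(f\)).

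It therefore suffices to establish the identity \(\cnov{kN * Q} = \cnov{\Dk{N} * Q} \cap (kN * \llbracket Q \rrbracket)\) as subsets of \(\Dk{N} * \llbracket Q \rrbracket\); granting this, \(f\) lies in both sets on the right, hence in \(\cnov{kN * Q} = \cnov{kG}\), which is the conclusion. The inclusion \(\subseteq\) is immediate from the constructions in \cref{def:novikov_ring,lem:novikov_series}. For \(\supseteq\), I would argue by reverse induction along the central series \(1 = Q_n \leqslant \dots \leqslant Q_0 = Q\), tracking the nested definition of the Novikov ring: at the \(i\)-th stage the ring \(R_i = \nov{R_{i+1} * Q_i/Q_{i+1}}{\chi_i}\) is cut out of \(R_{i+1} * \llbracket Q_i/Q_{i+1} \rrbracket\) by the condition \(\#(\supp(x) \cap \chi_i\inv(]-\infty,t])) < \infty\) for all \(t\), which does not refer to the coefficient ring, while the flattening inclusions of \cref{lem:novikov_series} are compatible with the coefficient inclusion \(kN \hookrightarrow \Dk{N}\). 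Hence the \(kN\)-version of \(R_i\) is exactly the \(\Dk{N}\)-version intersected with the formal series whose coefficients come from the \(kN\)-version of \(R_{i+1}\); feeding the inductive hypothesis through each level and flattening down to \(i = 0\) yields the displayed identity.

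The only real work is the bookkeeping in the second paragraph: simultaneously tracking the nested Novikov construction of \cref{def:novikov_ring} and the flattening identification of \cref{lem:novikov_series}, and checking that neither is affected by enlarging the coefficient ring from \(kN\) to \(\Dk{N}\). Since each constituent step is a tautology about supports and coefficients, I expect no genuine obstacle here — this is precisely why the statement is billed as immediate.
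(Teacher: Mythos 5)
Your proof is correct and takes the same route as the paper: apply \cref{prop:compatible_novikov} to land \(f\) in \(\cnov{\Dk{N}*Q}\), then use the \(kN *_\prec Q\) hypothesis to see the coefficients lie in \(kN\) and conclude \(f \in \cnov{kN*Q}\). The only difference is that the paper treats the final step as immediate, while you make explicit the identity \(\cnov{kN*Q} = \cnov{\Dk{N}*Q} \cap (kN * \llbracket Q \rrbracket)\) and justify it by the reverse induction through the nested Novikov construction — a correct unwinding of a detail the paper leaves to the reader.
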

\begin{proof}
    By \cref{prop:compatible_novikov}, \(f \in \cnov{\Dk{N}*Q}\). But we assumed that \(f \in kN *_\prec Q\), which means that the coefficients in the series representation of \(f\) are in \(kN\) (see \cref{fig:big_diagram}). Hence, \(f \in \cnov{kN * Q} = \cnov{kG}\). \qedhere
\end{proof}

\section{RPVN groups}

We now establish some properties of the class of \emph{residually (poly-\(\Z\) and virtually nilpotent)} groups (hereafter, \emph{RPVN} groups) and their Hughes-free division rings. 

\subsection{Residual properties of RPVN groups}

A group \(G\) is RPVN if for every nontrivial \(g \in G\), there is a poly-\(\Z\) virtually nilpotent quotient \(G \rightarrow Q\) that does not send \(g\) to the identity. If \(G\) is countable, then this is equivalent to the existence of a normal residual chain \(G = N_0 \geqslant N_1 \geqslant \dots\) such that \(G/N_i\) is poly-\(\Z\) virtually nilpotent. This follows from the fact that the class of poly-\(\Z\) virtually nilpotent groups is closed under direct products and subgroups. More precisely, given an enumeration \(g_1, g_2, \dots\) of \(G \smallsetminus \{1\}\), let \(G \rightarrow Q_i\) be a poly-\(\Z\) virtually nilpotent quotient in which \(g_i\) is nontrivial. Then we may take \(N_i\) to be the kernel of the induced map \(G \rightarrow Q_1 \times \dots \times Q_i\).

We recall some classical facts from the theory of polycyclic groups. Let \(G\) be a polycyclic-by-finite group, and suppose that 
\[
    1 = G_n \leqslant G_{n-1} \leqslant \dots \leqslant G_0 = G
\]
is a normal series whose successive quotients \(G_i/G_{i+1}\) are either finite and nontrivial minimal normal subgroups of \(G/G_{i+1}\) or free Abelian and have no infinite index \(G\)-invariant subgroups (under the action of \(G\) induced by conjugation). Such a series is called a \emph{weak chief series} for \(G\), and every polycyclic-by-finite group has one. Recall that \(G\) contains a unique maximal nilpotent normal subgroup \(\Fitt(G)\) called the \emph{Fitting subgroup} of \(G\); i.e.~\(\Fitt(G)\) is nilpotent and it contains every nilpotent normal subgroup of \(G\).

\begin{prop}[{\cite[Lemma 4]{Humphreys_polycyclic}}]
    Let \(G\) be a polycyclic-by-finite group with a weak chief series \(1 = G_{n+1} \leqslant G_n \leqslant \dots \leqslant G_1 \leqslant G_0 = G\). Then
    \[
        1 \leqslant \Fitt(G) \cap G_n \leqslant \dots \leqslant \Fitt(G) \cap G_1 \leqslant \Fitt(G)
    \]
    is a central series for \(\Fitt(G)\).
\end{prop}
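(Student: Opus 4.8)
The plan is to prove the stronger statement that \(\Fitt(G)\) centralises every factor of the weak chief series, i.e.\ \([\Fitt(G), G_i] \leqslant G_{i+1}\) for each \(i\). Granting this, the proposition is immediate: each \(\Fitt(G)\cap G_i\) is an intersection of two normal subgroups of \(G\) and is therefore normal in \(\Fitt(G)\), while
\([\Fitt(G),\Fitt(G)\cap G_i] \leqslant [\Fitt(G),G_i]\cap\Fitt(G) \leqslant G_{i+1}\cap\Fitt(G)\),
so the displayed chain is a central series for \(\Fitt(G)\) (re-proving in passing that \(\Fitt(G)\) is nilpotent).

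To prove the centralising statement I would fix \(i\) and pass to \(\overline G := G/G_{i+1}\), writing \(A := G_i/G_{i+1}\) and \(\overline F\) for the image of \(\Fitt(G)\), both normal in \(\overline G\); the goal is \([\overline F, A] = 1\). A first observation handles non-abelian factors for free: since \(\overline F\) and \(A\) are normal in \(\overline G\) we have \([\overline F, A] \leqslant \overline F \cap A\), and \(\overline F\cap A\) is a normal subgroup of \(\overline G\) contained in \(A\) which is nilpotent (it lies in \(\overline F\), a quotient of \(\Fitt(G)\)). If \(A\) is a finite minimal normal subgroup of \(\overline G\), then by minimality \(\overline F\cap A\) is either \(1\) — in which case we are done — or all of \(A\); in the latter case \(A\) is nilpotent, so \(Z(A)\neq 1\) is normal in \(\overline G\), and minimality forces \(A\) to be abelian. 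Thus in every case not already settled, \(A\) is abelian.

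With \(A\) abelian I would set \(\Phi := \langle A, \overline F\rangle\), which is a product of two normal nilpotent subgroups of \(\overline G\) and hence nilpotent by Fitting's theorem, with \(A \trianglelefteq \Phi\). Define the descending chain \(A_0 := A\) and \(A_{j+1} := [A_j, \Phi]\): each \(A_j\) is a \(\overline G\)-invariant subgroup of \(A\) (commutators of normal subgroups are normal) and the chain terminates at \(1\) because \(\Phi\) is nilpotent, so it remains to show \(A_1 = [\Phi, A] = 1\). If \(A\) is finite, then \(A_1 \subsetneq A\) (otherwise the chain would be constant and nonzero), so minimality gives \(A_1 = 1\). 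If \(A\) is free abelian with no infinite-index \(G\)-invariant subgroups, then \(V := A \otimes_\Z \Q\) is an irreducible \(\Q\overline G\)-module — a proper nonzero submodule would meet \(A\) in a nonzero \(\overline G\)-invariant subgroup of infinite index — and the \(\Q\overline G\)-submodules \(A_j \otimes \Q\), with \(A_{j+1}\otimes\Q\) spanned by the vectors \(\phi v - v\) (\(\phi\in\Phi\), \(v \in A_j\otimes\Q\)), again form a chain that cannot be constant and nonzero; hence \(A_1\otimes\Q = 0\) by irreducibility, and \(A_1 = 1\) by torsion-freeness of \(A\). Either way \([\overline F, A] \leqslant [\Phi, A] = A_1 = 1\), completing the argument.

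The main obstacle is the rationally irreducible free abelian case: such a chief factor can have proper nonzero invariant subgroups (for instance \(2A\)), so the inclusion \(A_1 \subsetneq A\) is not enough on its own, and one must tensor with \(\Q\) to exploit genuine \(\Q\overline G\)-irreducibility and force the chain \((A_j\otimes\Q)\) to collapse. Two minor points: non-abelian finite chief factors must be dispatched by the ``\([\overline F,A]\leqslant \overline F\cap A\)'' observation rather than by trying to make \(\Phi\) nilpotent (which fails, since \(A\) itself is then non-nilpotent); and one should resist replacing \(\Phi\) by \(\Fitt(\overline G)\), which may be strictly larger than \(\overline F\) — all the argument needs is some nilpotent normal subgroup of \(\overline G\) containing both \(A\) and \(\overline F\).
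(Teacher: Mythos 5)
The paper does not prove this proposition; it cites \cite[Lemma 4]{Humphreys_polycyclic}, so there is no internal argument to compare against. Your proof is correct. Reducing to the claim that \(\Fitt(G)\) centralises every factor of the weak chief series, i.e.\ \([\Fitt(G),G_i]\leqslant G_{i+1}\), is the right move, and the case analysis is sound: the observation \([\overline F,A]\leqslant\overline F\cap A\) plus minimality either finishes the finite case outright or forces \(A\) to be abelian; Fitting's theorem makes \(\Phi=\langle A,\overline F\rangle\) nilpotent so the descending chain \(A_j=[A_{j-1},\Phi]\) of \(\overline G\)-invariant subgroups terminates; and minimality (finite factors) or rational irreducibility of \(A\otimes\Q\) (free-abelian factors, where tensoring is indeed needed since \(A\) itself has many invariant finite-index subgroups) forces \(A_1=1\). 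One minor simplification you may wish to note: the auxiliary subgroup \(\Phi\) and Fitting's theorem can be avoided by taking commutators with \(\overline F\) alone, since \(A_1=[A,\overline F]\leqslant A\cap\overline F\leqslant\overline F\) already traps the subsequent terms inside the lower central series of \(\overline F\), giving termination; but your version is equally valid.
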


We now establish a residual property of RPVN groups that will be crucial in what follows. We will say that a residual chain \((N_i)_{i\geqslant 0}\) \emph{refines} another residual chain \((M_j)_{j \geqslant 0}\) in the same group \(G\) if for every \(j \geqslant 0\) there is an index \(i_j\) such that \(M_j = N_{i_j}\).

\begin{prop}\label{prop:RPVN}
    Let \(G\) be a countable RPVN group and suppose that \((K_l)_{l\geqslant 0}\) is a normal residual chain for \(G\) such that \(G/K_l\) is poly-\(\Z\) virtually nilpotent for each \(l\). Then there is a normal residual chain \((N_i)_{i\geqslant 0}\) for \(G\) that refines \((K_l)_{l\geqslant0}\) and a normal residual chain \((G_i)_{i\geqslant0}\) of finite-index subgroups such that 
    \begin{enumerate}[label = (\arabic*)]
        \item\label{item:chainInclusion} \(N_i \leqslant G_i\) for all \(i \geqslant 0\);
        \item\label{item:succQuotients} \(N_i/N_{i+1}\) is free Abelian and \(G/N_i\) is poly-\(\Z\) and virtually nilpotent for each \(i \geqslant 0\);
        \item\label{item:centralSeries} \(G_i/N_i\) is torsion-free nilpotent and
        \[
            1 \leqslant (G_i \cap N_{i-1})/N_i \leqslant \dots \leqslant (G_i \cap N_1)/N_i \leqslant G_i/N_i
        \]
        is a central series for all \(i \geqslant 0\).
    \end{enumerate}
\end{prop}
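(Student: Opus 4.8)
The plan is to build the chains $(N_i)$, $(G_i)$ by induction, interleaving a cofinal family of weak chief series for the quotients $G/K_l$. Here is the setup for the inductive step. Suppose we have already constructed $N_0 \geqslant \dots \geqslant N_i$ and $G_0, \dots, G_i$ satisfying \ref{item:chainInclusion}--\ref{item:centralSeries}, with $N_i$ refining an initial segment of $(K_l)$, say $N_i \leqslant K_{l}$. We want to continue. First I would pass to $K_{l+1}$ (or the next $K$ not yet below some $N_j$) and work inside the poly-$\Z$-virtually-nilpotent group $\bar G := G/(N_i \cap K_{l+1})$, which is poly-$\Z$ virtually nilpotent since it embeds in $G/N_i \times G/K_{l+1}$. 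Choose a weak chief series for $\bar G$; its terms pull back to a finite descending chain of normal subgroups of $G$ from $N_i \cap K_{l+1}$ down to $N_i \cap K_{l+1} \cap (\text{deeper terms})$, but we actually want to descend all the way, so instead I would choose a weak chief series of $G/(N_i \cap K_{l+1})$ refining the image of $K_{l+1}$, and then keep going: form a weak chief series of $G/(N_i \cap K_{l+1} \cap K_{l+2})$, and so on, so that the union of all these refinements gives a residual chain $(N_j)_{j > i}$ below $N_i$ whose successive quotients $N_j/N_{j+1}$ are the successive quotients of weak chief series. By construction each $N_j$ refines $(K_l)$ and each $G/N_j$ is poly-$\Z$ virtually nilpotent.

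The remaining issue is that successive quotients of a weak chief series are either \emph{finite} or free Abelian, whereas \ref{item:succQuotients} demands free Abelian quotients $N_j/N_{j+1}$. To fix this I would discard the finite steps: whenever $N_j/N_{j+1}$ is finite in the chief series, simply delete the term $N_{j+1}$ from the list (equivalently, only retain the terms at which the quotient is free Abelian). Since a weak chief series is finite, between consecutive retained terms there are only finitely many finite jumps, so the retained subchain is still cofinal and still intersects to $\{1\}$; hence $(N_j)$ is a genuine residual chain with all successive quotients free Abelian. (One should check that $N_j/N_{j+1}$, now a quotient of a free Abelian chief factor by a finite group coming from the collapsed finite factors, is again free Abelian -- this is where one uses that a free Abelian chief factor of a polycyclic group has \emph{no} infinite-index invariant subgroups, so any normal subgroup sitting between $N_{j+1}$ and $N_j$ with finite index below $N_j$ forces the intermediate quotient to be free Abelian as well; more carefully, reindex so that $N_{j}/N_{j+1}$ is taken to be a single free-abelian chief factor and absorb the finite factors into the definition of $G_j$ below.)

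Now for the $G_i$. Given $N_i$, the quotient $G/N_i$ is poly-$\Z$ virtually nilpotent, and by the quoted Proposition of Humphreys, if $1 = \bar G_{m+1} \leqslant \dots \leqslant \bar G_0 = G/N_i$ is a weak chief series for $G/N_i$ then intersecting with $\Fitt(G/N_i)$ gives a central series for the Fitting subgroup. I would choose the weak chief series for $G/N_i$ so that it refines the images of $N_0, N_1, \dots, N_{i-1}$ in $G/N_i$ (possible since those are normal subgroups of the polycyclic-by-finite group $G/N_i$, and weak chief series can be taken to refine any given finite normal filtration). Then define $G_i$ to be the preimage in $G$ of $\Fitt(G/N_i)$. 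This is a finite-index subgroup of $G$ containing $N_i$ (since the Fitting subgroup of a poly-$\Z$ virtually nilpotent group has finite index), which gives \ref{item:chainInclusion}; $G_i/N_i = \Fitt(G/N_i)$ is torsion-free nilpotent (torsion-free because $G/N_i$, being poly-$\Z$, is torsion-free, so its subgroups are; nilpotent by definition of the Fitting subgroup); and intersecting the chosen chief series with $\Fitt(G/N_i)$, which refines the images of $N_1, \dots, N_{i-1}$, yields exactly the central series
\[
    1 \leqslant (G_i \cap N_{i-1})/N_i \leqslant \dots \leqslant (G_i \cap N_1)/N_i \leqslant G_i/N_i
\]
of \ref{item:centralSeries}, because $(G_i \cap N_j)/N_i = \Fitt(G/N_i) \cap (N_j/N_i)$ and this is a term of the Humphreys central series. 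Finally $(G_i)_{i \geqslant 0}$ is a residual chain: each $G_i$ has finite index, and $\bigcap_i G_i \subseteq \bigcap_i$ (preimage of the torsion-free nilpotent $\Fitt(G/N_i)$) $= \{1\}$ once we note $N_i \leqslant G_i$ and $\bigcap N_i = \{1\}$ -- actually we need the $G_i$ \emph{nested}, which is not automatic from the above; to get nesting I would instead define the chain of $G_i$'s recursively, replacing $G_{i}$ by $G_{i} \cap G_{i-1} \cap \dots$, or more cleanly build the weak chief series of $G/N_{i}$ compatibly with that of $G/N_{i-1}$ so that the Fitting preimages nest.

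The main obstacle I anticipate is precisely this compatibility bookkeeping: arranging a single coherent system of weak chief series for all the quotients $G/N_i$ simultaneously, refining one another under the maps $G/N_{i+1} \to G/N_i$, so that the resulting $G_i = $ (preimage of $\Fitt(G/N_i)$) form a \emph{nested} residual chain and the central series in \ref{item:centralSeries} are literally induced by intersection. This is a standard but fiddly inverse-limit-of-finite-filtrations argument; everything else (free-abelian-ness of the retained factors, finite index of Fitting subgroups, torsion-freeness, Humphreys' lemma) is either quoted or routine.
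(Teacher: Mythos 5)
Your overall architecture is close to the paper's---weak chief series, Fitting subgroups, and Humphreys' lemma for the central series---but there are two concrete gaps, both of which the paper handles with ideas you do not supply.

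First, the step ``discard the finite factors of a weak chief series'' does not produce free Abelian successive quotients. If $N_j/N_{j+1}$ is finite and $N_{j+1}/N_{j+2}$ is free Abelian, deleting $N_{j+1}$ leaves $N_j/N_{j+2}$, which is an extension of a finite group by a free Abelian group; that is (free Abelian)-by-finite, and in general not free Abelian (nor even torsion-free, since it is a quotient of a subgroup of a torsion-free group). Your parenthetical fix (``reindex so that $N_j/N_{j+1}$ is a single free-abelian chief factor and absorb the finite factors into $G_j$'') does not resolve this: the chain of $N_j$'s must itself have free Abelian quotients for item \ref{item:succQuotients}, and absorbing finite factors into $G_j$ changes $G_j$, not $N_j/N_{j+1}$. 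The paper avoids the problem entirely by never producing finite factors in the first place: it builds a characteristic series on each $K_l/K_{l+1}$ by iterating the kernel of the map to $(K_l/K_{l+1})\ab \otimes \Q$, which by construction has free Abelian quotients, and only afterwards refines to a weak chief series. You would need some such device.

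Second, and more seriously, your argument that $(G_i)$ is a residual chain is incorrect. You write that $\bigcap_i G_i \subseteq \bigcap_i (\text{preimage of } \Fitt(G/N_i)) = \{1\}$ ``once we note $N_i \leqslant G_i$ and $\bigcap N_i = \{1\}$'', but $N_i \leqslant G_i$ and $\bigcap N_i = \{1\}$ only give $\bigcap G_i \supseteq \{1\}$, which is vacuous. In fact the intersection of the Fitting preimages can easily be nontrivial: take $G = \Z^2 \rtimes (\Z/2)$ with the $-1$ action, $N_i = 2^i\Z^2$; then the preimage of $\Fitt(G/N_i)$ is $\Z^2$ for every $i$. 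To kill this, the paper fixes enumerations of each difference $N_{i-1}\smallsetminus N_i$ and, at stage $i$, additionally intersects with the kernel of a map to a finite quotient $F_i$ of $G/N_i$ chosen to exclude the elements $g^{(i)}_1, g^{(i-1)}_2,\dots, g^{(1)}_i$; this diagonal argument, together with intersecting with $G_{i-1}$, is precisely what makes $(G_i)$ both nested and residual. The ``compatibility bookkeeping'' you flag at the end is not just fiddly: without some positive mechanism to shrink the $G_i$'s, the conclusion is false.

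As a minor point, once $G_i$ is strictly smaller than the Fitting preimage (as it must be), your identification $(G_i\cap N_j)/N_i = \Fitt(G/N_i) \cap (N_j/N_i)$ no longer holds as stated, but the paper's observation that a central series for a nilpotent group restricts to a central series for any subgroup (applied to $G_i/N_i \leqslant \Fitt(G/N_i)$) repairs this immediately, so this part is essentially fine once you know $G_i/N_i$ is a subgroup of the Fitting subgroup.
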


\begin{defn}\label{def:interlaced_chains}
    If \((G_i)\) and \((N_i)\) are normal residual chains in an RPVN group \(G\) satisfying the conclusions \ref{item:chainInclusion}, \ref{item:succQuotients}, and \ref{item:centralSeries} of \cref{prop:RPVN}, then we say that \((G_i)\) and \((N_i)\) are \emph{interlaced}.
\end{defn}

\begin{proof}[Proof (of \cref{prop:RPVN})]
    For every \(l \geqslant 0\), \(K_l/K_{l+1}\) is poly-\(\Z\) virtually nilpotent. We construct a finite characteristic series with free Abelian successive quotients as follows: the kernel of the free Abelianisation map
    \[
        K_l/K_{l+1} \rightarrow (K_l/K_{l+1})\ab \otimes \Q
    \]
    is characteristic and of lower cohomological dimension than that of \(K_l/K_{l+1}\). By induction, this kernel has a characteristic series with free Abelian successive quotients, and therefore so does \(K_l/K_{l+1}\). For each \(l\), we take the preimage of the characteristic series of \(K_l/K_{l+1}\) in \(G\) and obtain a new normal residual chain \((L_j)\) which refines \((K_l)\). Note that the successive factors of \((L_j)\) are all free Abelian.

    The conjugation action of \(G\) on itself induces an action on each Abelian group \(L_j/L_{j+1}\). If \(G\) preserves an infinite-index subgroup of \(L_j/L_{j+1}\), then it preserves the minimal direct factor containing it. Thus, we may further refine the residual chain \((L_j)\) to a normal residual chain \((N_i)\) such that
    \[
        1 \leqslant N_{i-1}/N_i \leqslant \dots \leqslant N_1/N_i \leqslant N_0/N_i = G/N_i
    \]
    is a weak chief series for \(G/N_i\). Hence, \((N_i)\) satisfies the conclusion of item \ref{item:succQuotients}.

    We now construct the chain \((G_i)\) by induction. Let \(G_0 = N_0 = G\). Fix enumerations
    \[
        \{g_1^{(i)}, g_2^{(i)}, g_3^{(i)}, \dots\} = N_{i-1} \smallsetminus N_i 
    \]
    for all \(i \geqslant 1\). Suppose we have constructed a chain of finite-index normal subgroups \(G_0 \geqslant \dots \geqslant G_{i-1}\) such that \(N_i \leqslant G_i\) and \(G_i/N_i\) is torsion-free nilpotent, and moreover such that \(g_1^{(m)}, g_2^{(m-1)}, \dots, g_m^{(1)} \notin G_m\) for all \(m < i\). Since \(G/N_i\) is polycyclic, it is residually finite. Thus, we let \(F_i\) be a finite quotient of \(G/N_i\) in which \(g_1^{(i)}, g_2^{(i-1)}, \dots, g_i^{(1)}\) are all nontrivial, and let 
    \[
        G_i = \ker(G \rightarrow F_i) \cap G_{i-1} \cap H_i,
    \]
    where \(H_i\) is the preimage of \(\Fitt(G/N_i)\) in \(G\) (note that \([G:H_i]< \infty\), since \(G/N_i\) is virtually nilpotent). The \(g_1^{(i)}, g_2^{(i-1)} \dots, g_i^{(1)} \notin G_i\). By induction, \((G_i)\) is a residual chain of finite-index subgroups such that \(N_i \leqslant G_i\) and \(G_i/N_i\) is torsion-free nilpotent for all \(i\). 

    To conclude, since
    \[
        1 \leqslant \Fitt(G/N_i) \cap N_{i-1}/N_i \leqslant \dots \leqslant \Fitt(G/N_i) \cap N_1/N_i \leqslant \Fitt(G/N_i)
    \]
    is a central series for \(\Fitt(G/N_i)\) and \(G_i/N_i \leqslant \Fitt(G/N_i)\), it follows that
    \[
        1 \leqslant (G_i \cap N_{i-1})/N_i \leqslant \dots \leqslant (G_i \cap N_1)/N_i \leqslant G_i/N_i
    \]
    is a central series for \(G_i/N_i\). \qedhere
\end{proof}

\subsection{The division ring of an RPVN group}

Note that if \(G\) is RPVN, then \(G\) is residually (locally indicable and amenable) and therefore there exists a Hughes-free embedding \(kG \hookrightarrow \Dk{G}\) for every field \(k\) by \cite[Corollary 1.3]{JaikinZapirain2020THEUO}. We now turn to the description of the Hughes-free division ring of an RPVN group  in terms of Novikov rings of its finite-index subgroups, similar to the one given by Kielak \cite{KielakRFRS} and subsequently by Okun--Schreve \cite{OkunSchreve_DawidSimplified} in the case of RFRS groups. We start by recalling the definitions of invariant Malcev--Neumann rings and inductive rings introduced by Okun--Schreve in \cite{OkunSchreve_DawidSimplified}.

\begin{defn}[Invariant Malcev--Neumann rings]
    Let \(\mathcal G\) be a locally indicable group and let \(k\) be a field such that \(\Dk{\mathcal G}\) exists. Suppose that \(N \trianglelefteqslant G\) is a pair of normal subgroups of \(\mathcal G\) such that \(G/N\) is orderable and let \(R \subseteq \Dk{N}\) be a ring containing \(kN\) that is invariant under the conjugation action of \(\mathcal G\) on \(\Dk{N}\). The \emph{invariant Malcev--Neumann ring} is
    \[
        R * \langle G/N \rangle = \left\{ f \in \Dk{G} \ : \ \iota_\olob(f) \in R *_\olob G/N \ \text{for all orders} \ \olob \ \text{on} \ G/N \right\}.
    \]
\end{defn}

\begin{defn}[Inductive rings]\label{def:inductive}
    Let \(G\) be a group with a normal residual chain \((N_i)_{i\geqslant 0}\) of normal subgroups such that \(N_i/N_{i+1}\) is orderable and amenable for all \(i \geqslant 0\). Then we define
    \[
        R_0^{(N_i)} = kN_0, \quad R_1^{(N_i)} = k N_1 * \langle N_0/N_1 \rangle, \quad R_2^{(N_i)} = kN_2 * \langle N_1/N_2 \rangle* \langle N_0/N_1 \rangle,
    \]
    and in general
    \[
        R_m^{(N_i)} = kN_m * \langle N_{m-1}/N_m \rangle * \langle N_{m-2}/N_{m-1} \rangle * \cdots * \langle N_0/N_1 \rangle.
    \]
\end{defn}

\begin{rem}
    We have chosen not to bracket the expressions for the rings \(R_n\) above for reasons of readability and conciseness, and because there is only one possible interpretation of the notation that makes sense.
\end{rem}

\begin{thm}[{\cite[Theorem 5.1]{OkunSchreve_DawidSimplified}}] \label{thm:OkunSchreve}
    Let \(G\) be a group with a normal residual chain \((N_i)_{i\geqslant }\) such that \(N_i/N_{i+1}\) is orderable and amenable for all \(i\). Then
    \[
        \Dk{G} = \bigcup_{m=0}^\infty R_m^{(N_i)}.
    \]
\end{thm}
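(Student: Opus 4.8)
The plan is to prove the two inclusions separately. The containment $\bigcup_{m\geq 0} R_m^{(N_i)} \subseteq \Dk{G}$ is the routine one: by construction every invariant Malcev--Neumann ring is a subring of $\Dk{G}$, hence so is each $R_m^{(N_i)}$, and $R_0^{(N_i)} = kG$. The operation $R \mapsto R*\langle H/K\rangle$ is monotone in the coefficient ring $R$, and $R_{m+1}^{(N_i)}$ is obtained from $R_m^{(N_i)}$ by replacing the innermost layer $kN_m$ with $kN_{m+1}*\langle N_m/N_{m+1}\rangle$, which contains $kN_{m+1}*(N_m/N_{m+1}) = kN_m$; thus $R_m^{(N_i)} \subseteq R_{m+1}^{(N_i)}$, and $S := \bigcup_m R_m^{(N_i)}$ is an increasing union of subrings of $\Dk{G}$, so it is a subring of $\Dk{G}$ containing $kG$.

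For the reverse inclusion I would use that $\Dk{G}$ is the division closure of $kG$ (condition (1) of \cref{def:hughes_free}): it is therefore enough to prove that $S$ is closed under inversion of its nonzero elements, since then $S$ is a division subring of $\Dk{G}$ containing $kG$ and hence equals $\Dk{G}$. The content is the following statement, which I would prove by induction on $m$, uniformly over all groups and all residual chains of the stated type: \emph{if $0\neq f \in R_m^{(N_i)}$, then $f^{-1} \in R_{m'}^{(N_i)}$ for some $m'\geq m$.}

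For the step $m\geq 1$, write $R_m^{(N_i)} = R'*\langle N_0/N_1\rangle$, where $R' \subseteq \Dk{N_1}$ is the $(m-1)$-th inductive ring of the shifted chain $N_1\geq N_2\geq\cdots$. Fix an order $\prec$ on $N_0/N_1$; then $\iota_\prec(f) = \sum_q c_q u_q$ is a Malcev--Neumann series over $N_0/N_1$ with well-ordered support and coefficients $c_q \in R'$, and writing $c_{q_0}u_{q_0}$ for its $\prec$-least term we have $\iota_\prec(f) = c_{q_0}u_{q_0}(1+\beta_+)$ with $\beta_+$ supported on elements $\succ 1$, so that
\[
    \iota_\prec(f)^{-1} = (1+\beta_+)^{-1}u_{q_0}^{-1}c_{q_0}^{-1} = \Bigl(\textstyle\sum_{k\geq 0}(-\beta_+)^k\Bigr)u_{q_0}^{-1}c_{q_0}^{-1}.
\]
Since $0\neq c_{q_0}\in R'$, the inductive hypothesis applied to $N_1$ (at level $m-1<m$) gives $c_{q_0}^{-1}\in R''$ for some inductive ring $R''$ of the shifted chain at a finite level; expanding the powers $\beta_+^k$ in $\Dk{N_1}*(N_0/N_1)$ produces only coefficients built from the $c_q$, $c_{q_0}^{-1}$ and structure constants from $N_1\subseteq kN_1$, so (enlarging $R''$ so that $R'\subseteq R''$) all coefficients of $\iota_\prec(f)^{-1}$ lie in $R''$, and $\sum_k(-\beta_+)^k$ has well-ordered support because in an ordered group the subsemigroup generated by a well-ordered set of positive elements is well-ordered. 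As $R''$ is invariant under the conjugation action of $G$ (needed to move $c_{q_0}^{-1}$ past the group elements) and this holds for every order $\prec$ on $N_0/N_1$, we get $f^{-1}\in R''*\langle N_0/N_1\rangle$, which is an inductive ring of $(N_i)$ at a finite level.

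The case $m=0$ --- inverting elements of $kG$ --- is where the genuine difficulty lies, because the innermost coefficient ring at level $0$ is $kN_1$ and the division closure of $kN_1$ in $\Dk{N_1}$ is already all of $\Dk{N_1}$, so the above recursion by itself gives no control on the level. Here I would run a secondary induction on $\#\supp(f)$, using crucially that $(N_i)$ is \emph{residual}. If $\#\supp(f)=1$ then $f$ is a unit in $kG$ and $f^{-1}\in kG$. Otherwise, since $\bigcap_i N_i = 1$ there is a largest $j$ with $\supp(f)$ contained in a single coset $gN_j$; writing $f = g\cdot c$ with $c = g^{-1}f\in kN_j$, the series $c$ meets at least two cosets of $N_{j+1}$, so peeling off its $\prec$-least term over $N_j/N_{j+1}$ as above expresses $c^{-1}$ in terms of the inverse of a coefficient $c_0\in kN_{j+1}$ with $\#\supp(c_0)<\#\supp(f)$; by the secondary inductive hypothesis $c_0^{-1}$, and hence $c^{-1}$ and $f^{-1}=c^{-1}g^{-1}$, lies in an inductive ring of $(N_i)$ at a finite level. (Amenability of the successive quotients $N_i/N_{i+1}$ is used throughout via \cref{prop:ore_HF_MN}, which identifies the Hughes-free division rings involved with Ore localisations of crossed products, so that the inductive rings are well-defined and the embeddings $\iota_\prec$ exist.) Combining the two cases gives the claim and hence $S=\Dk{G}$.
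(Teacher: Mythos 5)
This statement is Theorem~5.1 of Okun--Schreve, which the paper cites without proof, so I can only evaluate your sketch on its own terms; it does follow the broad strategy of the original (reduce to closure of $S:=\bigcup_m R_m^{(N_i)}$ under inversion, then induct).

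The easy containment and the case $m=0$ are essentially sound. At $m=0$ the key point, which you rely on implicitly, is that $c$ is a \emph{finite} sum, so as $\prec$ ranges over all orders on $N_j/N_{j+1}$ the $\prec$-least coefficient $c_0$ runs over the finitely many nonzero coefficients of $c$; the levels produced by the secondary induction can therefore be bounded by their maximum, and a single $R''$ works for all $\prec$. You should say this explicitly, since the definition of the invariant Malcev--Neumann ring quantifies over \emph{all} orders. The "going back up" from $c^{-1}\in R_{m_0+1}^{(N_{j+i})}$ to $f^{-1}=c^{-1}g^{-1}\in R_{j+m_0+1}^{(N_i)}$ also deserves a sentence (one needs $R_{m_0+1}^{(N_{j+i})}\subseteq R_{j+m_0+1}^{(N_i)}$, which holds because an element of $\Dk{N_j}$ is a one-term series at each of the top $j$ layers), but this is routine.

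The genuine gap is in the inductive step $m\geq 1$. There $\iota_\prec(f)$ is an \emph{infinite} series, and as $\prec$ ranges over all orders on $N_0/N_1$ the $\prec$-least coefficient $c_{q_0}$ can be a different element of $R_{m-1}^{(N_{1+i})}$ for each $\prec$; a priori this is an infinite family. The level $m''$ at which $c_{q_0}^{-1}$ lies, supplied by the inductive hypothesis, depends on $c_{q_0}$ and hence on $\prec$, and nothing in your argument bounds it uniformly. The sentence "this holds for every order $\prec$, we get $f^{-1}\in R''*\langle N_0/N_1\rangle$" asserts exactly the uniformity that needs proof: you have a possibly different $R''_\prec$ for each $\prec$, and the invariant MN ring demands a single $R''$ working for all orders. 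This is the technical heart of the theorem, and it does not follow from what you wrote. One way to repair it is to avoid peeling off the $\prec$-least term of the infinite series and instead invoke the Ore property (\cref{prop:ore_HF_MN}(1)) to write $f=\alpha\beta^{-1}$ with $\alpha,\beta$ \emph{finite} series over $\Dk{N_1}$, so that only finitely many coefficients ever need inverting; but then one must control the coefficients of $\alpha,\beta$ by a correctly organized induction, which is not the induction on $m$ you set up. A secondary omission: the $G$-invariance of $R''$ that you invoke requires the inductive rings of the shifted chain $(N_{1+i})$ to be formed with ambient group $G$ rather than $N_1$ in the invariant MN construction; you should say so.
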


\begin{rem}
    The group \(G\) in the above theorem is residually (amenable and locally indicable), so \(\Dk{G}\) exists by \cite[Corollary 1.3]{JaikinZapirain2020THEUO}.
\end{rem}

If \(H \leqslant G\) is a subgroup and \(G\) has a normal residual chain \((N_i)_{i\geqslant 0}\) with orderable and amenable successive quotients, then \((H\cap N_i)_{i\geqslant 0}\) is a normal residual chain for \(H\) with orderable and amenable successive quotients. The inductive rings of finite-index subgroups of \(G\) are related to the inductive rings of \(G\) in the following natural way, provided that the successive quotients of the residual chain are torsion-free Abelian. Note that the following lemma applies to RPVN groups by \cref{prop:RPVN}.

\begin{lem}[{\cite[Theorem 5.2]{OkunSchreve_DawidSimplified}}] \label{lem:OSfinite_index}
    Let \(G\) be a residually (locally indicable and amenable group) such that there is a normal residual chain \((N_i)_{i \geqslant 0}\) with torsion-free Abelian successive quotients \(N_i/N_{i+1}\). If \(H \trianglelefteqslant G\) is a normal subgroup of finite index, then
    \[
        R_m^{(N_i)} \cong R_m^{(H \cap N_i)} * G/H
    \]
    for each \(i \geqslant 0\).
\end{lem}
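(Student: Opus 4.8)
The plan is to induct on $m$. For $m=0$ we have $R_0^{(N_i)} = kG$ and $R_0^{(H\cap N_i)} = kH$, and the desired isomorphism is the standard crossed-product decomposition $kG \cong kH * G/H$ attached to the finite-index normal subgroup $H$. For the inductive step one unwinds the outermost layer of the inductive rings (\cref{def:inductive}): writing $R_m^{(N_i),G}$ and $R_m^{(H\cap N_i),H}$ for these rings, recording the ambient group, we have $R_m^{(N_i),G} = R_{m-1}^{(N_{1+j}),\,N_1} * \langle N_0/N_1 \rangle$ and $R_m^{(H\cap N_i),H} = R_{m-1}^{(H\cap N_{1+j}),\,H\cap N_1} * \langle H/(H\cap N_1) \rangle$, where $N_1$ carries the residual chain $(N_{1+j})_{j\geqslant 0}$, which again has torsion-free Abelian successive quotients, and $H\cap N_1$ is a finite-index normal subgroup of $N_1$ with $(H\cap N_1)\cap N_{1+j} = H\cap N_{1+j}$. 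Applying the inductive hypothesis to $H\cap N_1 \trianglelefteqslant N_1$ gives $R_{m-1}^{(N_{1+j}),\,N_1} \cong R_{m-1}^{(H\cap N_{1+j}),\,H\cap N_1} * N_1/(H\cap N_1)$ inside $\Dk{N_1}$. Setting $B = H\cap N_1 \trianglelefteqslant G$ and $A = R_{m-1}^{(H\cap N_{1+j}),\,H\cap N_1}$, a $G$-invariant subring of $\Dk{B}$ containing $kB$, the theorem reduces to proving the identity
\[
    (A * N_1/B) * \langle G/N_1 \rangle \;=\; (A * \langle H/B \rangle) * G/H
\]
of subrings of $\Dk{G} = \Dk{H} * G/H$. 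Note that $G/N_1 = N_0/N_1$ is torsion-free Abelian, that $H/B \cong HN_1/N_1$ is a finite-index subgroup of it, and that $G/H$ is finite.

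To prove the identity, write a general $f \in \Dk{G}$ as $f = \sum_{\bar g \in G/H} r_{\bar g} u_{\bar g}$ with $r_{\bar g}\in\Dk{H}$, choosing the transversal $\{u_{\bar g}\}$ adapted to the chain $H \leqslant HN_1 \leqslant G$, so that the $u_{\bar g}$ lying over a common element of $G/HN_1$ differ pairwise by elements of $N_1$ that are nontrivial modulo $B$. The claim is that $f$ lies in the left-hand side if and only if every $r_{\bar g}$ lies in $A*\langle H/B\rangle$. The inclusion $\supseteq$ is the easy direction: given an order $\prec$ on $G/N_1$, it restricts to an order $\sigma$ on $HN_1/N_1\cong H/B$, and for $r_{\bar g}\in A*\langle H/B\rangle$ the Malcev--Neumann expansion $\iota_\sigma(r_{\bar g})$ over $H/B$, with coefficients in $A$ and well-ordered support, is at the same time an expansion over $G/N_1$ supported in the coset $HN_1/N_1$ with coefficients in $A\cdot(N_1/B)\subseteq A*N_1/B$; multiplying by the finitely many $u_{\bar g}$ and summing preserves well-ordered supports and coefficients in $A*N_1/B$, so $\iota_\prec(f)\in(A*N_1/B)*_\prec G/N_1$. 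For the inclusion $\subseteq$, fix an order $\sigma$ on $H/B$; because $G/N_1$ is torsion-free Abelian, $\sigma$ extends to an order $\prec$ on $G/N_1$ (pass to the divisible hull, as in the proof of \cref{lem:orderable_Abelian_maps_to_R}). Using the compatibility of the Hughes-free embeddings $\Dk{H}\subseteq\Dk{G}$ with the Malcev--Neumann embeddings (which rests on the uniqueness of Hughes-free embeddings and the identifications $\Dk{H}=\Dk{B}*H/B$, $\Dk{N_1}=\Dk{B}*N_1/B$ from \cref{prop:ore_HF_MN}), one has $\iota_\prec(f)=\sum_{\bar g}\iota_\sigma(r_{\bar g})\,\iota_\prec(u_{\bar g})$, where $\iota_\sigma(r_{\bar g})$ is supported in $HN_1/N_1$ and $\iota_\prec(u_{\bar g})$ is a monomial supported at the image of $u_{\bar g}$ in $G/N_1$. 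Grouping the $\bar g$ by their image in $G/HN_1$, all terms in one group are supported in a single coset of $HN_1/N_1$ in $G/N_1$, and the hypothesis $f\in(A*N_1/B)*\langle G/N_1\rangle$ forces the coefficients of $\iota_\prec(f)$ on that coset into $A*N_1/B$. The decisive observation is that the finite reindexing map $N_1/B\to N_1/B$ recording how the $\Dk{B}$-coefficients of the various $\iota_\sigma(r_{\bar g})$ are distributed among the $N_1/B$-slots is a bijection --- because $B$ is normal in $G$ and the relevant $u_{\bar g}$ differ by elements of $N_1$ nontrivial modulo $B$ --- so each coefficient of $\iota_\sigma(r_{\bar g})$ is recovered individually and lies in $A$; its support, being contained in a translate of the well-ordered support of $\iota_\prec(f)$, is well-ordered. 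Hence $r_{\bar g}\in A*\langle H/B\rangle$ for every $\sigma$, which is what we wanted.

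I expect the main obstacle to be the $\subseteq$ direction, specifically disentangling the $\Dk{H}$-decomposition $f=\sum_{\bar g}r_{\bar g}u_{\bar g}$ from the Malcev--Neumann expansion over $G/N_1$: since $N_1$ need not be contained in $H$, several of the $u_{\bar g}$ collapse to the same element of $G/N_1$, so the coefficients of the individual $r_{\bar g}$ are not read off from disjoint regions of $\iota_\prec(f)$ but must be separated using the bijection of $N_1/B$ above. Making the compatibility of Hughes-free embeddings under restriction to $\Dk{H}$ fully precise is the other point that needs care; beyond these, the argument is routine bookkeeping with crossed products and well-ordered supports.
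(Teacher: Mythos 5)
The paper does not prove this lemma itself: it imports it verbatim as \cite[Theorem 5.2]{OkunSchreve_DawidSimplified}, so there is no in-paper argument to compare against. I therefore assess your proposal on its own.

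The overall strategy — induct on $m$, peel off the outermost invariant Malcev--Neumann layer to reduce to $R_m^{(N_i)} = R_{m-1}^{(N_{1+j})}*\langle N_0/N_1\rangle$, invoke the inductive hypothesis for $H\cap N_1 \trileq N_1$, and then prove the single-step identity
\[
(A*N_1/B)*\langle G/N_1\rangle = (A*\langle H/B\rangle)*G/H
\]
by comparing Malcev--Neumann expansions over $G/N_1$ and over $H/B \cong HN_1/N_1$ — is sound, and the decisive point you identify (the map $N_1/B \to N_1/B$ recording how the $\Dk{B}$-coefficients of the various $\iota_\sigma(r_{\bar g})$ land in the $N_1/B$-slots of $\iota_\prec(f)$ is a bijection, because $B = H\cap N_1$ is normal in $G$ and the relevant transversal elements differ by representatives of $N_1/B$) is exactly what makes the ``$\subseteq$'' direction work: one reads off each $a_{c,i,p}\in A$ individually, up to a harmless right factor in $B\subseteq A$. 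A few small remarks. First, the identification $\Dk{H}=\Dk{B}*H/B$ you invoke should read $\Dk{H}\cong\Ore(\Dk{B}*H/B)$, since $H/B\cong HN_1/N_1$ is infinite; this does not affect the argument because you only use the Malcev--Neumann embedding $\iota_\sigma$, not a crossed-product identity. Second, the clause about the support of $\iota_\sigma(r_{\bar g})$ ``being contained in a translate of the well-ordered support of $\iota_\prec(f)$'' is unnecessary (and, strictly speaking, inaccurate after cancellation): $r_{\bar g}\in\Dk{H}$, so $\iota_\sigma(r_{\bar g})$ has well-ordered support automatically; only membership of the coefficients in $A$ needs proof. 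Third, when applying the inductive hypothesis one implicitly uses that the invariant Malcev--Neumann construction does not depend on the choice of ambient group $\mathcal G$ beyond requiring $\mathcal G$-invariance of the base ring, so that $R_{m-1}^{(N_{1+j})}$ computed with ambient group $N_1$ coincides with the $G$-invariant version sitting inside $R_m^{(N_i)}$; this is true but worth saying out loud. With these clarifications, the proof reads correctly.
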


We are now ready to give the description of \(\Dk{G}\) in terms of Novikov rings. Given the preparatory results above, the proof is similar to that of \cite[Theorem 7.1]{OkunSchreve_DawidSimplified}.

\begin{conv}\label{conv:pm}
    In what follows, if \(\chi = (\chi_1, \dots, \chi_n)\) is a multicharacter, then \(\nov{kG}{\pm \chi}\) will stand for all the \(2^n\) Novikov rings \(\nov{kG}{(\pm\chi_1, \dots, \pm\chi_n)}\).
\end{conv}

\begin{thm}\label{thm:divring_elements}
    Let \(G\) be a countable RPVN group with interlaced residual chains \((G_i)\) and \((N_i)\), and let \(f_1, \dots, f_l \in \Dk{G}\). Then there is some \(m \in \N\) and a multicharacter on \(G_m/N_m\) such that, when writing
    \begin{equation}\label{eq:coeffs}\tag{\(\dagger\)}
        f_j = \sum_{t \in G/G_m} f_j^t t \in \Dk{G_m} * G/G_m, \qquad f_i^t \in \Dk{G_m},
    \end{equation}
    we have \(f_j^t \in \widehat{kG_m}^{\pm \chi}\) for all \(j = 1, \dots, l\) and \(t \in G/G_m\).
\end{thm}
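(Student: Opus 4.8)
The plan is to first pass from $\Dk{G}$ to the Hughes-free division ring of the finite-index subgroup $G_m$, using the inductive-ring description of $\Dk{G}$ (\cref{thm:OkunSchreve,lem:OSfinite_index}), and then to apply the iterated-fraction machinery of the previous section to the short exact sequence $\ses{N_m}{G_m}{G_m/N_m}$, whose quotient is torsion-free nilpotent by \cref{prop:RPVN}. The $2^n$ sign choices will be dealt with at the very end by a symmetry observation rather than by re-running the argument.

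\emph{Reduction to $\Dk{G_m}$.} Since each $N_i/N_{i+1}$ is free Abelian, hence orderable and amenable, \cref{thm:OkunSchreve} applies to $G$ with the chain $(N_i)$, so there is an $m$ with $f_1, \dots, f_l \in R_m^{(N_i)}$. As $G_m$ is a finite-index normal subgroup of $G$ and $(N_i)$ has torsion-free Abelian successive quotients, \cref{lem:OSfinite_index} gives $R_m^{(N_i)} \cong R_m^{(G_m \cap N_i)} * G/G_m$, a crossed product contained in $\Dk{G_m} * G/G_m \subseteq \Dk{G}$. Fixing coset representatives for $G/G_m$ and writing $f_j$ as in \eqref{eq:coeffs}, we get $f_j^t \in R_m^{(G_m \cap N_i)} \subseteq \Dk{G_m}$ for all $j, t$.

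\emph{Iterated fractions and a compatible multicharacter.} Put $Q = G_m/N_m$ and $Q_i = (G_m \cap N_i)/N_m$, so that $1 = Q_m \leqslant \dots \leqslant Q_0 = Q$ is a central series (by \cref{prop:RPVN}) whose successive quotients $Q_{i-1}/Q_i$ embed into the free Abelian groups $N_{i-1}/N_i$ and hence are orderable; fix an order $\prec_i$ on each and let $\prec$ be the associated lexicographic order on $Q$. Working with the short exact sequence $\ses{N_m}{G_m}{Q}$, I would choose an iterated fraction $\mathfrak f_{j,t}$ for each $f_j^t \in \Dk{G_m}$; by \cref{def:iterated_fraction} this choice — and the finitely many supports occurring in it — depends only on the central series $(Q_i)$, not on the orders $\prec_i$. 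By \cref{lem:compatible_character} there is then a multicharacter $\chi = (\chi_0, \dots, \chi_{m-1})$ on $Q$ associated to $(Q_i)$ that is compatible with every $\mathfrak f_{j,t}$.

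\emph{The sign choices, the conclusion, and the main obstacle.} Fix $\varepsilon = (\varepsilon_0, \dots, \varepsilon_{m-1}) \in \{\pm 1\}^m$ and let $\prec^\varepsilon$ be the lexicographic order on $Q$ built from the orders $\varepsilon_i \prec_i$. A homomorphism strictly preserves an order on a finite set precisely when its negative strictly preserves the reversed order, so — using that $\mathfrak f_{j,t}$ is an iterated fraction for the same central series — the multicharacter $\varepsilon\chi = (\varepsilon_0 \chi_0, \dots, \varepsilon_{m-1}\chi_{m-1})$ is compatible with $\mathfrak f_{j,t}$ with respect to $\prec^\varepsilon$. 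On the other hand, unwinding \cref{def:inductive} and applying part~(2) of \cref{prop:ore_HF_MN} layer by layer along $(Q_i)$, every element of the inductive ring $R_m^{(G_m \cap N_i)}$ is a Malcev--Neumann series over $Q$ for $\prec^\varepsilon$ all of whose coefficients lie in $k(G_m \cap N_m) = kN_m$; that is, $R_m^{(G_m \cap N_i)} \subseteq kN_m *_{\prec^\varepsilon} Q$. Thus $f_j^t \in \Dk{G_m} \cap (kN_m *_{\prec^\varepsilon} Q)$, and \cref{cor:compatible_novikov} gives $f_j^t \in \nov{kG_m}{\varepsilon\chi}$. Since $\varepsilon$ was arbitrary, $f_j^t \in \nov{kG_m}{\pm\chi}$ for all $j$ and $t$, as required. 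The genuinely technical point here is the inclusion $R_m^{(G_m \cap N_i)} \subseteq kN_m *_{\prec^\varepsilon} Q$: one has to check that the iterated invariant Malcev--Neumann ring built along the central series coincides, inside $\Dk{G_m}$, with the ordinary Malcev--Neumann ring of series over $Q$ for the lexicographic order $\prec^\varepsilon$, with coefficients bottoming out in $kN_m$. This hinges on each $Q_i$ being convex with respect to $\prec^\varepsilon$ — which is exactly why the orders in play must be lexicographic with respect to the chosen central series rather than arbitrary orders on $Q$ — after which the remaining bookkeeping is routine.
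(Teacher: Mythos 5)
Your proposal follows essentially the same line as the paper's own proof: reduce via the Okun--Schreve description (\cref{thm:OkunSchreve}, \cref{lem:OSfinite_index}) to coefficients $f_j^t \in R_m^{(G_m\cap N_i)}$, observe that this inductive ring sits inside the Malcev--Neumann ring of $G_m/N_m$ for a lexicographic order built on the central series from \cref{prop:RPVN}, then fix iterated fractions, extract a compatible multicharacter from \cref{lem:compatible_character}, apply \cref{cor:compatible_novikov}, and finally obtain all $2^n$ sign patterns by reversing the component orders. The one place you flag as "genuinely technical" -- that the iterated invariant Malcev--Neumann ring lands inside $kN_m *_{\prec^\varepsilon} Q$ -- is handled in the paper by exactly the two observations you sketch: the invariant ring is by definition contained in $R *_{\olob} G/N$ for \emph{every} order $\olob$ (so in particular for $\olob_i$), and the iterated Malcev--Neumann ring for a tuple of orders on the successive quotients coincides with the Malcev--Neumann ring for the induced lexicographic order on $Q$.
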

\begin{proof}
    By \cref{thm:OkunSchreve}, there is some \(m \in \N\) such that \(f_j \in R_m^{(N_i)}\) for all \(j = 1, \dots, l\). Let \(H = G_m\). By \cref{prop:RPVN}, the series
    \[
        1 \leqslant (H \cap N_{m-1})/N_m \leqslant \dots \leqslant (H \cap N_1)/N_m \leqslant H/N_m
    \]
    is a central series for \(H/N_m\). Put orders \(\olob_i\) on the successive quotients
    \[
        (H \cap N_{i})/N_m \ / \ (H \cap N_{i+1})/N_m \cong (H \cap N_{i}) / (H \cap N_{i+1})
    \]
    and let \(\olob\) be the induced lexicographic order on \(H/N_m\). We have
    \begin{align*}
        R_m^{(H \cap N_i)} &= k N_m * \langle (H \cap N_{m-1})/N_m \rangle * \cdots * \langle H / (H \cap N_1) \rangle \\
        &\leqslant k N_m *_{\olob_{m-1}} (H \cap N_{m-1})/N_m *_{\olob_{m-2}} \cdots *_{\olob_0} H / (H \cap N_1) \\
        &= k N_m *_\olob H/N_m.
    \end{align*}
 
    By \cref{lem:OSfinite_index},
    \[
        f_j = \sum_{t \in G/H} f_j^t t \in R_m^{(N_i)} = R_m^{(H \cap N_i)} * G/H 
    \]
    and therefore \(f_j^t \in R_m^{(H \cap N_i)}\) for all \(j = 1, \dots, l\) and \(t \in G/H\). Fix representative fractions \(\mathfrak f_j^t\) for each of the elements \(f_j^t\) and let \(\chi\) be a compatible multicharacter on \(H/N_m\), which exists by \cref{lem:compatible_character}. It follows from \cref{cor:compatible_novikov} that \(f_j^t \in \widehat{kH}^\chi\). By reversing any of the orders \(\olob_i\) and changing the sign of the corresponding components of \(\chi\), we find that \(f_j^t \in \widehat{kH}^{\pm\chi}\). \qedhere
\end{proof}

The following theorem is the cohomological generalisation of \cite[Theorem 5.2]{KielakRFRS} to RPVN groups. In particular, it implies that vanishing \(\ell^2\)-cohomology implies vanishing Novikov (co)homology. The proof is similar to that given in \cite[Proposition 3.2]{Fisher_freebyZ}.

\begin{thm}\label{thm:L2_novikov}
    Let \(G\) be a countable RPVN group, let \(k\) be a field, and let \((G_i)\) and \((N_i)\) be interlaced chains in \(G\). Suppose \(P_\bullet\) is a chain complex of projective modules such that \(P_n\) is finitely generated for some \(n \in \N\). If \(\H^n(P_\bullet; \Dk{G}) = 0\), then there is a multicharacter \(\chi\) on \(G_m/N_m\) for some \(m\) such that
    \[
        \H^n(P_\bullet; \nov{kG_m}{\pm\chi}) = 0.
    \]
\end{thm}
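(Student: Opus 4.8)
The plan is to reduce the hypothesis to the statement that a finite matrix with entries in $kG_m$ becomes one-sidedly invertible over $\Dk{G_m}$, and then to push the one-sided inverse into a Novikov ring by means of \cref{thm:divring_elements}. The mechanism that makes this work is that, once an order on $G_m/N_m$ is fixed, a Novikov ring $\nov{kG_m}{\epsilon\chi}$ and the division ring $\Dk{G_m}$ both sit inside the common ambient bimodule $\Dk{N_m}*\llbracket G_m/N_m\rrbracket$ compatibly with $kG_m$ (see \cref{fig:big_diagram}); hence any identity of matrices over $kG_m$ that holds over $\Dk{G_m}$ and all of whose auxiliary entries happen to lie in $\nov{kG_m}{\epsilon\chi}$ already holds over $\nov{kG_m}{\epsilon\chi}$, since matrix multiplication is computed in the ambient bimodule either way.

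I would first treat the case most relevant to the applications, $n=\cd_k(G)$ with $P_{n-1}$ and $P_n$ finitely generated projective and $P_{n+1}=0$. Then $\H^n(P_\bullet;-)\cong\H^n(G;kG)\otimes_{kG}(-)$ is right exact, and $X:=\H^n(G;kG)$ is finitely presented, being the cokernel of a map of finitely generated projective $kG$-modules; write $X=\coker\bigl(\rho\colon (kG)^c\to (kG)^a\bigr)$. The hypothesis says $X\otimes_{kG}\Dk G=0$, i.e.\ $\rho\otimes\id_{\Dk G}$ is surjective, and since its target is free it splits: there is a matrix $\sigma$ over $\Dk G$ with $\rho\sigma=I_a$. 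Feeding the $ca$ entries of $\sigma$ into \cref{thm:divring_elements} produces $m\in\N$ and a multicharacter $\chi$ on $G_m/N_m$ so that, writing each entry of $\sigma$ as $\sum_{t\in G/G_m}\sigma^t\,t$ in $\Dk{G_m}*G/G_m$, all coefficients $\sigma^t$ lie simultaneously in all $2^n$ Novikov rings $\nov{kG_m}{(\pm\chi_1,\dots,\pm\chi_n)}$. Re-expressing $\rho$ as a matrix $\rho'$ over $kG_m$ via $kG\cong kG_m*G/G_m$ (so that $\coker\rho'$ is $X$ viewed over $kG_m$) and $\sigma$ as a matrix $\sigma'$ over $\Dk{G_m}$ with entries in $\nov{kG_m}{\epsilon\chi}$ for a fixed sign vector $\epsilon$, the identity $\rho'\sigma'=I$ transports to $\nov{kG_m}{\epsilon\chi}$ by the observation above; hence $\coker(\rho'\otimes\id_{\nov{kG_m}{\epsilon\chi}})=0$, i.e.\ $\H^n(P_\bullet;\nov{kG_m}{\epsilon\chi})=0$, for every $\epsilon$.

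For the general statement (only $P_n$ finitely generated) the same strategy applies, but one must work harder because $P_{n-1}$ and $P_{n+1}$ need not be finitely generated, so the problem does not literally become a finite matrix problem; this is the step I expect to cost the most work. The relevant finite data is: a basis of the cocycle space $\ker d^n$, which is finite because $\Hom_{kG}(P_n,\Dk G)\cong\Hom_{kG}(P_n,kG)\otimes_{kG}\Dk G$ is finite-dimensional over $\Dk G$; finitely many cochains $\psi_j\in\Hom_{kG}(P_{n-1},\Dk G)$ with the $\phi_j:=d^{n-1}\psi_j$ spanning $\ker d^n$, of which only the restrictions to the finitely generated submodule $\im\partial_n\subseteq P_{n-1}$ matter; and the matrices of $\partial_{n+1}$ and $\partial_n$ restricted to these finitely generated pieces. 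Feeding these finitely many elements of $\Dk G$ into \cref{thm:divring_elements} places them all in the Novikov rings $\nov{kG_m}{\pm\chi}$ of a suitable finite-index $G_m$, and one then checks that the relations $d^{n-1}\psi_j=\phi_j$ and the linear relations witnessing membership in $\ker d^n$ survive over $\nov{kG_m}{\epsilon\chi}$; here one uses the projectivity of the $P_i$ — so that $\Hom_{kG}(P_\bullet,-)$ is exact — to control how cochains defined on $\im\partial_n$ with coefficients in $\nov{kG_m}{\epsilon\chi}$ extend over all of $P_{n-1}$, thereby deducing $\H^n(P_\bullet;\nov{kG_m}{\pm\chi})=0$.
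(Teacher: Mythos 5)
Your first paragraph (the case $P_{n+1}=0$ with $P_{n-1}$ and $P_n$ finitely generated) is correct, and the tensor-right-exactness / one-sided-inverse argument is a clean alternative to what the paper does there. But the general case — which is what the theorem actually asserts — has a genuine gap that you flag but do not close. Your plan is to span $\ker d^n$ over $\Dk{G}$ by coboundaries $\phi_j = d^{n-1}\psi_j$, feed the relevant finitely many entries into \cref{thm:divring_elements}, and then ``check that the relations survive'' over $\nov{kG_m}{\epsilon\chi}$. This does not suffice: given a cocycle $\alpha$ with Novikov coefficients, you can write $\alpha$ as a $\Dk{G_m}$-linear combination of the $\phi_j$ (and their $G/G_m$-translates), but there is no reason the coefficients of that combination lie in the Novikov ring, so the candidate primitive $\sum c_{j,t}\,\psi_j t$ need not be a Novikov cochain. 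Moreover, ``only the restriction to $\im\partial_n$ matters'' and ``use projectivity to extend cochains'' do not combine: $\im\partial_n$ is finitely generated but not a direct summand of $P_{n-1}$, and a Novikov-valued cochain on $\im\partial_n$ has no automatic extension to $P_{n-1}$ since $\nov{kG_m}{\epsilon\chi}$ is not injective.

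The step you are missing is the paper's truncation-plus-Smith-normal-form device. After arranging the $P_i$ to be free, the paper chooses \emph{finitely generated direct summands} $F_{n-1}\leqslant P_{n-1}$ and $F_{n+1}\leqslant P_{n+1}$ (with $F_n=P_n$) so that $\H^n$ of the truncated complex agrees with that of $P_\bullet$: $F_{n-1}$ is any f.g.\ direct summand containing $\im\partial_n$ (being a direct summand is what lets cochains on it extend to $P_{n-1}$, which your $\im\partial_n$ does not), and $F_{n+1}$ is found by a Zorn's-Lemma minimality argument among f.g.\ direct summands, using that $\Dk{G}$ is a division ring. The truncated coboundary maps are then finite matrices over $kG$, and one takes invertible matrices $M_i^{\pm1}$ over $\Dk{G}$ putting them into Smith normal form. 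Feeding the entries of all $M_i^{\pm1}$ into \cref{thm:divring_elements} gives $G_m$ and $\chi$; the same matrices remain invertible over $\nov{kG_m}{\pm\chi}$, so the Novikov complex has the same Smith normal form and hence the same (vanishing) rank count in degree $n$. Finally the Novikov cohomology of $P_\bullet$ injects into that of the truncation because $\im d^{n-1}$ agrees and $\ker d^n$ shrinks. This is what makes the coefficient problem in your sketch disappear; I would replace the second paragraph with this.
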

\begin{proof}
    We may assume that \(P_\bullet\) is a chain complex of free \(kG\)-modules such that \(P_n\) is finitely generated. Indeed, let \(Q_n\) be a projective module such that \(P_n \oplus Q_n\) is finitely generated and free. The cohomology of the chain complex
    \[
        \cdots \rightarrow P_{n+2} \rightarrow P_{n+1} \oplus Q_{n+1} \rightarrow P_n \oplus Q_n \rightarrow P_{n-1} \rightarrow \cdots
    \]
    (with any coefficients) is isomorphic to that of \(P_\bullet\). By iteratively taking free complements of the other projective modules in the chain complex, we may assume that they are all free, and that \(P_n\) is finitely generated.

    We fix some notations and terminology that will be used in the rest of the proof. If \(M\) is a \(kG\)-module, then we denote \(\Hom_{kG}(M,\Dk{G})\) by \(M^*\). For each free \(kG\)-module \(P_i\), fix an isomorphism \(P_i \cong \bigoplus_{J_i} kG\). We say that a submodule \(F_i \leqslant P_i\) is a \emph{direct summand} if it corresponds to \(\bigoplus_{J_i'} kG\) for some \(J_i' \subseteq J_i\). Note that this is more restrictive than simply being a factor in a direct factor in an abstract direct sum decomposition of \(P_i\).

    We now focus on the portion \(P_{n+1} \rightarrow P_n \rightarrow P_{n-1}\) of the chain complex. We want to choose finitely generated direct summands \(F_{n\pm1} \leqslant P_{n\pm1}\) so that the cohomology of \(F_{n+1}^* \leftarrow P_n^* \leftarrow F_{n-1}^*\) coincides with that of \(P_{n+1}^* \leftarrow P_n^* \leftarrow P_{n-1}^*\). Since \(P_n\) is finitely generated, its image in \(P_{n+1}\) lies in a finitely generated direct summand \(F_{n-1} \leqslant P_{n+1}\). Then the images of the maps \(P_{n-1}^* \rightarrow P_n^*\) and \(F_{n-1}^* \rightarrow P_n^*\) coincide, since every map on \(F_{n-1}\) extends to one on \(P_{n-1}\). Note that this did not have anything to do with the choice of coefficients in \(\Dk{G}\).

    Since \(P_{n+1}\) is the directed union of its finitely generated direct summands, it follows that 
    \[
        \ker(P_n^* \rightarrow P_{n+1}^*) = \bigcap_{\substack{F_{n+1} \leqslant P_{n+1} \\ \text{f.g.~direct summand}}} \ker(P_n^* \rightarrow F_{n+1}^*).
    \]
    Because \(\Dk{G}\) is a division ring, each \(\ker(P_n^* \rightarrow F_{n+1}^*)\) is a finite-dimensional \(\Dk{G}\)-module and every chain of such modules must have a minimal element (under inclusion). By Zorn's Lemma, there is a minimal space, and therefore 
    \[
        \ker(P_n^* \rightarrow P_{n+1}^*) = \ker(P_n^* \rightarrow F_{n+1}^*)
    \]
    for some finitely generated direct summand \(F_{n+1} \leqslant P_{n+1}\). Let \(F_n = P_n\). Then the degree \(n\) cohomology of of \(F_{n+1}^* \leftarrow F_n^* \leftarrow F_{n-1}^*\) coincides with that of \(P_{n+1}^* \leftarrow P_n^* \leftarrow P_{n-1}^*\), as desired.

    The coboundary maps of \(F_{n+1}^* \leftarrow F_n^* \leftarrow F_{n-1}^*\) are maps between finitely generated modules over \(\Dk{G}\), which we identify with finite matrices (note that these matrices will have entries in \(kG\), since they are induced by maps between free \(kG\)-modules). Since \(\Dk{G}\) is a division ring, for \(i = n-1, n, n+1\) there are invertible matrices
    \[
        M_i \colon \Hom_{kG}(F_i, \Dk{G}) \rightarrow \Hom_{kG}(F_i, \Dk{G})
    \]
    that put the coboundary maps into Smith normal form. By \cref{thm:divring_elements}, there is some \(H = G_m\) such that every entry of each matrix \(M_i^{\pm 1}\) has coefficients in \(\nov{kH}{\pm\chi}\) when written in the form \eqref{eq:coeffs}. Then there are invertible matrices
    \[
        \overline M_i^{\pm 1} \colon \Hom_{kH}(F_i, \Dk{G}) \rightarrow \Hom_{kH}(F_i, \Dk{G})
    \]
    all of whose entries are in \(\nov{kH}{\pm\chi}\). Moreover, they put the cochain complex 
    \[\label{eq:modified_nov_complex}
        \Hom_{kH}(F_{n+1}, \nov{kH}{\pm\chi}) \leftarrow \Hom_{kH}(F_n, \nov{kH}{\pm\chi}) \leftarrow \Hom_{kH}(F_{n-1}, \nov{kH}{\pm\chi}) \tag{$\ddagger$}
    \]
    into \([G:H]\) copies of the same Smith normal form as do the matrices \(M_i^{\pm 1}\), and therefore its degree \(n\) cohomology vanishes.
    
    To conclude, note that \(\Hom_{kH}(F_{i-1}, \nov{kH}{\pm\chi})\) and \(\Hom_{kH}(P_{i-1}, \nov{kH}{\pm\chi})\) have the same image in \(\Hom_{kH}(P_i, \nov{kH}{\pm\chi})\). Since the kernel of 
    \[
        \Hom_{kH}(P_i, \nov{kH}{\pm\chi}) \rightarrow \Hom_{kH}(P_{i+1}, \nov{kH}{\pm\chi})
    \]
    is contained in that of 
    \[
        \Hom_{kH}(P_i, \nov{kH}{\pm\chi}) \rightarrow \Hom_{kH}(F_{i+1}, \nov{kH}{\pm\chi}),
    \]
    and thus we conclude that \(\H^n(H;\nov{kH}{\pm\chi})\) is a submodule of the degree \(n\) cohomology of \eqref{eq:modified_nov_complex}, and therefore it vanishes. \qedhere
\end{proof}

\section{Cohomological dimension of conilpotent subgroups}

In this section, we prove the main results stated in the introduction. In \cite[Theorem 3.5]{Fisher_freebyZ}, it is shown that if \(\chi \colon G \rightarrow \R\) is a character and \(G\) is a group of type \(\FP(R)\) for some ring \(R\), then the vanishing of the top-dimensional Novikov cohomology with respect to \(\pm\chi\) implies that \(\cd_R(\ker \chi) < \cd_R(G)\). We extend this result to multicharacters on nilpotent quotients. We first prove a lemma, which abstracts the idea of the proof of \cite[Theorem 3.5]{Fisher_freebyZ}. 

\begin{lem}\label{lem:very_general}
    Let \(R\) be a ring and let \(1 \rightarrow N \rightarrow G \rightarrow Q \rightarrow 1\) be a short exact sequence of groups such that \(G\) is of type \(\FTP_n(R)\), and let \(M\) be an \(RN\)-module. If there exist collections of \(RG\)-rings \(\{S_i\}_{i \in I}\), \(S_i\)-modules \(\{L_i\}_{i\in I}\), and homomorphisms \(\{M_i \rightarrow M * \llbracket Q \rrbracket\}_{i\in I}\) of \(RG\)-modules such that the induced map
    \[
        \bigoplus_{i \in I} L_i \rightarrow M * \llbracket Q \rrbracket
    \]
    is surjective and \(\H^n(G; S_i) = 0\) for all \(i \in I\), then \(\H^n(N;M) = 0\).
\end{lem}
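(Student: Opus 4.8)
The plan is to deduce the vanishing of $\H^n(N;M)$ from that of $\H^n(G; M * \llbracket Q\rrbracket)$ by a Shapiro-type argument, and then to obtain the latter from the surjection $\bigoplus_{i\in I} L_i \onto M*\llbracket Q\rrbracket$ together with the finiteness hypothesis on $G$.

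\medskip
\emph{Step 1 (Shapiro).} Since $RG \cong RN * Q$ is free as a left $RN$-module on the distinguished units $u_q$, every projective $RG$-module is projective over $RN$, and the key point is that $M*\llbracket Q\rrbracket$ is precisely the coinduced module $\mathrm{Coind}_N^G M = \Hom_{RN}(RG, M)$: a set-theoretic section $Q \to G$ identifies $\Hom_{RN}(RG, M)$ with the module of formal series $\sum_{q\in Q} m_q u_q$, compatibly with the $RG$-actions. The adjunction $\mathrm{Res}_N^G \dashv \mathrm{Coind}_N^G$ then furnishes a natural isomorphism $\Hom_{RG}(-, M*\llbracket Q\rrbracket) \cong \Hom_{RN}(\mathrm{Res}_N^G(-), M)$. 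Applying this to a projective $RG$-resolution $P_\bullet \to R$ of the trivial module — which restricts to a projective $RN$-resolution of $R$ — yields
\[
    \H^n(G; M*\llbracket Q\rrbracket) \cong \H^n(N; M) \quad \text{for all } n,
\]
so it suffices to show $\H^n(G; M*\llbracket Q\rrbracket) = 0$.

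\medskip
\emph{Step 2 (top cohomology is right exact and commutes with direct sums).} As $G$ is of type $\FTP_n(R)$ we have $\cd_R(G) \le n$, hence $\H^{n+1}(G;-) = 0$, and moreover there is a projective resolution $0 \to P_n \to \dots \to P_0 \to R \to 0$ with $P_n$ finitely generated. From the long exact sequence, $\H^n(G;-) = \coker\big(\Hom_{RG}(P_{n-1},-) \to \Hom_{RG}(P_n,-)\big)$ is right exact. Because $P_n$ is finitely generated, $\Hom_{RG}(P_n, \bigoplus_{j} M_j) = \bigoplus_j \Hom_{RG}(P_n, M_j)$, and a one-line chase — the $j$-th component of $\phi\circ d$ equals $(\mathrm{pr}_j \circ \phi)\circ d$ — shows that the image of $\Hom_{RG}(P_{n-1},\bigoplus_j M_j)$ in $\bigoplus_j\Hom_{RG}(P_n,M_j)$ is $\bigoplus_j \mathrm{im}\big(\Hom_{RG}(P_{n-1},M_j)\to \Hom_{RG}(P_n,M_j)\big)$. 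Thus $\H^n(G;-)$ commutes with arbitrary direct sums.

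\medskip
\emph{Step 3 (conclusion).} Each $L_i$, being an $S_i$-module, is a quotient of a free $S_i$-module $\bigoplus_{\Lambda} S_i$, and via $RG \to S_i$ this is a surjection of $RG$-modules. The long exact sequence, together with $\H^{n+1}(G;-) = 0$ and $\H^n(G; \bigoplus_\Lambda S_i) = \bigoplus_\Lambda \H^n(G;S_i) = 0$ (Step 2 and the hypothesis $\H^n(G;S_i)=0$), forces $\H^n(G; L_i) = 0$ for every $i$. By Step 2 again, $\H^n(G; \bigoplus_i L_i) = \bigoplus_i \H^n(G; L_i) = 0$, and since $\H^n(G;-)$ is right exact it preserves the surjection $\bigoplus_i L_i \onto M*\llbracket Q\rrbracket$, whence $\H^n(G; M*\llbracket Q\rrbracket) = 0$. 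By Step 1, $\H^n(N;M) = 0$.

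\medskip
The step needing the most care is the identification in Step 1: one must verify that the twisted formal-series $RG$-module $M*\llbracket Q\rrbracket$ really coincides with $\mathrm{Coind}_N^G M$ — i.e.\ that the conjugation twisting built into the crossed product $RN * Q$ matches the one appearing in $\mathrm{Coind}$ — so that its $G$-cohomology computes $\H^n(N;M)$. Everything after that is a formal consequence of $\cd_R(G) \le n$ and the finite generation of $P_n$.
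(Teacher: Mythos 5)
Your proof is correct and follows essentially the same route as the paper's: Shapiro's Lemma identifies $\H^n(G;M*\llbracket Q\rrbracket)$ with $\H^n(N;M)$, and then everything reduces formally to $\cd_R(G)\le n$ (right exactness of $\H^n(G;-)$) and to the finite generation of $P_n$ (commutation with direct sums). The only cosmetic difference is in the final reduction: the paper deduces $\H^n(G;L_i)=0$ from $\H^n(G;S_i)=0$ via the natural isomorphism $\Hom_{RG}(P_n,L_i)\cong\Hom_{RG}(P_n,S_i)\otimes_{S_i}L_i$ and right exactness of $\otimes_{S_i}L_i$, whereas you instead cover $L_i$ by a free $S_i$-module and reuse your Step 2; both arguments use exactly the same ingredients.
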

\begin{proof}
    Fix a projective resolution \(0 \rightarrow P_n \rightarrow \dots \rightarrow P_0 \rightarrow R \rightarrow 0\) of the trivial \(RG\)-module \(R\) such that \(P_n\) is finitely generated. We will prove that \(\H^(N,M) = 0\) by a series of reductions.

    \begin{claim}
        It is suffices to show that \(\H^n(G; M* \llbracket Q \rrbracket) = 0\).
    \end{claim}
    \begin{proof}
        This is because \(M * \llbracket Q \rrbracket\) is isomorphic to the \(RG\)-module coinduced by \(M\), and therefore \(\H^n(G; M* \llbracket Q \rrbracket) \cong \H^n(N;M)\) by Shapiro's Lemma. \qedhere
    \end{proof}

    \begin{claim}
        It is suffices to show that \(\H^n(G; \bigoplus_{i \in I}L_i) = 0\).
    \end{claim}
    \begin{proof}
        Let \(K\) be the kernel of the surjection \(\bigoplus_{i \in I} L_i \rightarrow M * \llbracket Q \rrbracket\). Then the long exact sequence in cohomology associated to the surjection contains the portion
        \[
            \H^n(G; \bigoplus_{i \in I}L_i) \rightarrow \H^n(G; M* \llbracket Q \rrbracket) \rightarrow \H^{n+1}(G; K),
        \]
        where \(K\) is the kernel of the surjection. But \(\cd_R(G) \leqslant n\), so \(\H^n(G; \bigoplus_{i \in I}L_i)\) surjects onto \(\H^n(G; M* \llbracket Q \rrbracket)\), which proves the claim. \qedhere
    \end{proof}

    \begin{claim}
        It suffices to show that \(\H^n(G;L_i) = 0\) for all \(i \in I\).
    \end{claim}
    \begin{proof}
        Because \(P_n\) is finitely generated, there is a natural isomorphism
        \[
            \bigoplus_{i \in I} \Hom_{RG}(P_n,L_i) \cong \Hom_{RG}(P_n, \bigoplus_{i\in I}L_i).
        \]
        Thus, there is a commutative diagram
        \[
            \begin{tikzcd}
                \bigoplus_{i\in I}\Hom_{RG}(P_n;L_i) \arrow[d, "\cong"'] \arrow[r] & \bigoplus_{i\in I}\H^n(G;L_i) \arrow[d] \arrow[r] & 0 \\
                \Hom_{RG}(P_n;\bigoplus_{i\in I}L_i) \arrow[r] & \H^n(G;\bigoplus_{i\in I}L_i) \arrow[r] & 0
            \end{tikzcd}
        \]
        with exact rows. This immediately implies that the rightmost vertical map is an epimorphism, and thus proves the claim. \qedhere
    \end{proof}

    \begin{claim}
        The natural map \(\H^n(G;S_i) \otimes_{S_i} L_i \rightarrow \H^n(G;L_i)\) is surjective for all \(i \in I\).
    \end{claim}
    \begin{proof}
        Because \(P_n\) is finitely generated, there are natural isomorphisms
        \begin{align*}
            \Hom_{RG}(P_n, L_i) &\cong \Hom_{RG}(P_n,RG) \otimes_{RG} L_i \\
            &\cong \Hom_{RG}(P_n,RG) \otimes_{RG} S_i \otimes_{S_i} L_i \\
            &\cong \Hom_{RG}(P_n,S_i) \otimes_{S_i} L_i.
        \end{align*}
        and because tensoring is right exact, the commutative diagram
        \[
            \begin{tikzcd}
                \Hom_{RG}(P_n,S_i) \otimes_{S_i} L_i \arrow[d, "\cong"'] \arrow[r] & \H^n(G;S_i) \otimes_{S_i} L_i \arrow[d] \arrow[r] & 0 \\
                \Hom_{RG}(P_n;L_i) \arrow[r] & \H^n(G;L_i) \arrow[r] & 0
            \end{tikzcd}
        \]
        has exact rows. Therefore the rightmost vertical map is surjective, which proves the claim. \qedhere
    \end{proof}
    The conclusion \(\H^n(N,M) = 0\) then follows immediately from the claims, since we assume \(\H^n(R;S_i) = 0\) for all \(i \in I\). \qedhere
\end{proof}

Using \cref{lem:very_general}, we can relate the vanishing of top-dimensional Novikov cohomology with the cohomological dimension of a conilpotent kernel. Note that \cref{conv:pm} is still in place.

\begin{thm}\label{thm:cd_kernel}
    Let \(R\) be a ring and let \(\varphi \colon G \rightarrow Q\) be an epimorphism onto a nilpotent group \(Q\), where \(G\) is a group of type \(\FTP_d(R)\). If there is a multicharacter \(\chi = (\chi_i)_{i=0}^{n-1}\) on \(Q\) such that 
    \[
        \H^d(G; \nov{kG}{\pm \chi}) = 0,
    \]
    then \(\cd_k(\ker \varphi) < d\).
\end{thm}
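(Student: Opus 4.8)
The plan is to apply \cref{lem:very_general} to the short exact sequence $1 \to N \to G \to Q \to 1$, where $N = \ker\varphi$, taking its parameter $n$ to be $d$. Since $G$ is of type $\FTP_d(k)$ we have $\cd_k(G) \le d$, and hence $\cd_k(N) \le d$ because $N$ is a subgroup of $G$; so it suffices to prove that $\H^d(N; M) = 0$ for \emph{every} $kN$-module $M$, and this is precisely what \cref{lem:very_general} delivers once the requisite data are supplied. For that data I would let $I$ index the $2^n$ sign choices $\epsilon$, set $S_\epsilon = \nov{kG}{\epsilon\chi}$ (a $kG$-ring with $\H^d(G; S_\epsilon) = 0$ by hypothesis), and let $L_\epsilon = \nov{M * Q}{\epsilon\chi}$ be the corresponding \emph{Novikov module}, i.e.\ the subgroup of $M * \llbracket Q \rrbracket$ of those series whose support satisfies the $\epsilon\chi$-Novikov condition; as in \cref{lem:novikov_series}, $L_\epsilon$ is a submodule of $M * \llbracket Q\rrbracket$ over $S_\epsilon$ and the inclusion is $kG$-linear.

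Everything then comes down to establishing that the induced map $\bigoplus_\epsilon \nov{M*Q}{\epsilon\chi} \to M * \llbracket Q\rrbracket$ is surjective, equivalently that $M * \llbracket Q \rrbracket = \sum_{\epsilon} \nov{M*Q}{\epsilon\chi}$. I would first reduce to a convenient central series: replacing $Q$ by $Q/(\text{torsion})$ replaces $N$ by a finite overgroup and so affects neither the hypothesis nor the conclusion, after which $Q$ is torsion-free and (being finitely generated, as $G$ is) poly-$\Z$; its central series may thus be refined to one with infinite cyclic successive quotients, and the trivial components this introduces can be absorbed by shortening the multicharacter (a trivial bottom component amounts to enlarging $N$ to $\varphi^{-1}(Q_{n-1})$, for which the conclusion is only stronger). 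So I may assume each $Q_i/Q_{i+1}\cong\Z$ and each $\chi_i$ is an order-embedding into $\R$. Now peel off the outermost factor: with $G_1 = \varphi^{-1}(Q_1)$ one has $G/G_1\cong\Z$ and, by transitivity of coinduction, $M * \llbracket Q\rrbracket \cong M' * \llbracket G/G_1\rrbracket$ with $M' = \Hom_{kN}(kG_1, M) = M * \llbracket Q_1\rrbracket$. The rank-one case is immediate: any series $\sum_{a\in\Z} m'_a u_a$ is the sum of its part with $a \ge 0$ (support bounded below, hence in $\nov{M'*\Z}{\chi_0}$) and its part with $a < 0$ (in $\nov{M'*\Z}{-\chi_0}$), using only the splitting $\R = {]-\infty,0]} \cup {[0,\infty[}$. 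Splitting the $M'$-coefficients further by the inductive hypothesis applied to $Q_1$ and the truncated multicharacter $(\chi_1,\dots,\chi_{n-1})$, and noting that the $\chi$-Novikov condition is exactly ``$\chi_0$-Novikov on the outside, $(\chi_1,\dots,\chi_{n-1})$-Novikov on the coefficients,'' yields the surjectivity.

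With this in hand, \cref{lem:very_general} gives $\H^d(N;M) = 0$; as $M$ was arbitrary, $\cd_k(N) \le d-1 < d$, as required.

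The main obstacle is exactly the surjectivity of the covering. It fails for an arbitrary multicharacter — for instance for a component with infinite kernel, or an injective component with dense image in $\R$ — since then the support of a general series cannot be written as a finite union of Novikov sets; the reduction to infinite-cyclic successive quotients, together with the correct treatment of trivial components (which cannot simply be deleted but must be pushed to the bottom and traded for an enlargement of $N$), is the technical core. Everything else — the homological bookkeeping — is already packaged in \cref{lem:very_general}.
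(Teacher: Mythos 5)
The paper takes a genuinely different route at exactly the point you identify as the technical core, and the difference matters. Rather than using the Novikov modules $\nov{M*Q}{\epsilon\chi}$, the paper's proof defines, for each sign pattern $\epsilon$ and each $(t_1,\dots,t_n)\in\R^n$, the \emph{bounded-support} module $L^\epsilon_{t_1,\dots,t_n}$ of series whose support satisfies $\epsilon_i\chi_i(q_i)>\epsilon_i t_i$, and then sets $L^\epsilon = \bigcup_{t_1,\dots,t_n} L^\epsilon_{t_1,\dots,t_n}$. These modules carry \emph{no finiteness condition} on the support --- only a boundedness condition --- so they are much larger than $\nov{M*Q}{\epsilon\chi}$, and the surjectivity of $\bigoplus_\epsilon L^\epsilon \to M*\llbracket Q\rrbracket$ is immediate: split any formal series into $2^n$ ``quadrants'' according to the signs of $\chi_i(q_i)$. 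No reduction to rank-one quotients or injective characters is needed. What then has to be checked is that $L^\epsilon$ is a module over $\nov{kG}{\epsilon\chi}$ (the Novikov ring's finiteness condition is what makes the products converge), and this is what the paper addresses. Your proposal inverts the difficulty: by choosing the Novikov modules $\nov{M*Q}{\epsilon\chi}$, the module structure is clear, but the covering fails for the given $\chi$, so everything hinges on the reduction.

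That reduction has gaps that I don't think can be patched. First and most seriously, the hypothesis $\H^d(G;\nov{kG}{\pm\chi})=0$ is for a \emph{specific} multicharacter $\chi$ associated to a specific central series. Refining the central series, passing to $Q/\text{(torsion)}$, or ``pushing trivial components to the bottom'' all change the Novikov rings, and you give no argument that the top-degree cohomology still vanishes for the new rings; there is no reason it should. Second, a refinement of a central series does not automatically come equipped with compatible characters: $\chi_i\colon Q_i/Q_{i+1}\to\R$ restricts to an inserted subquotient but need not descend to the complementary quotient, and the ``trivial component'' it produces may sit in the middle, not the bottom, of the series. Pushing it to the bottom would require the corresponding subgroup to be central in all of $Q$, which a middle term of a central series need not be. Third, $G$ is assumed only to be of type $\FTP_d(R)$, which does not force $G$ to be finitely generated, so $Q$ need not be poly-$\Z$ and your reduction to infinite cyclic successive quotients is not available in general. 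The paper's choice of $L^\epsilon$ is precisely what makes all of these issues evaporate.
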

\begin{proof}
    Let \(N = \ker \varphi\) and let \(M\) be any \(RN\)-module. Denote by 
    \[
        1 = Q_n \leqslant \dots \leqslant Q_0 = Q
    \]
    the central series associated to \(\chi\). We will show that \(\H^d(N;M) = 0\), which implies that \(\cd_R(N) < n\). There is a decomposition
    \[
        M*\llbracket Q \rrbracket \cong M * \llbracket Q_1/Q_0 \rrbracket * \dots * \llbracket Q_n/ Q_{n-1} \rrbracket,
    \]
    so we think of the elements of \(M*\llbracket Q \rrbracket\) as series of the form
    \[
        \sum_{q_n \in Q_n/Q_{n-1}} \dots \sum_{q_1 \in Q_1/Q_0} m_{q_1, \dots, q_n} u_{q_1} \cdots u_{q_n}.
    \]
    Given an \(n\)-tuple \((t_1, \dots, t_n) \in \R^n\), let
    \[
        L_{t_1, \dots, t_n}^{\pm, \dots, \pm} \ = \ \left\{ \sum m_{q_1, \dots, q_n} u_{q_1} \cdots u_{q_n}  \ : \ \pm \chi_i(q_i) > \pm t_i \ \text{for each} \ i = 1, \dots, n \right\}
    \]
    and let \(L^{\pm, \dots, \pm} = \bigcup_{(t_1, \dots, t_n) \in \R^n} = L_{t_1, \dots, t_n}^{\pm, \dots, \pm}\). The inclusions \(L^{\pm,\dots, \pm} \rightarrow M*\llbracket Q \rrbracket\) induce a surjection \(\bigoplus_{\pm, \dots, \pm} L^{\pm, \dots, \pm} \rightarrow M * \llbracket Q \rrbracket\) and \(L^{\pm, \dots, \pm}\) is a \(\nov{kG}{(\pm\chi_1, \dots, \pm \chi_n)}\)-module. By \cref{lem:very_general}, \(\H^d(N;M) = 0\).
\end{proof}

We are now ready to prove \cref{thm:A}.

\begin{thm}\label{thm:topL2_cd_drop}
    Let \(G\) be a countable RPVN group of type \(\FTP_d(k)\) for some field \(k\). The following are equivalent:
    \begin{enumerate}
        \item \(\btwo{d}(G;k) = 0\);
        \item for every normal residual chain \((N_i)_{i \geqslant 0}\) such that \(G/N_i\) is poly-\(\Z\) virtually nilpotent, \(\cd_k(N_m) < d\) for sufficiently large \(m\);
        \item for every normal residual chain \((N_i)_{i \geqslant 0}\) such that \(G/N_i\) is poly-\(\Z\) virtually nilpotent, \(\hd_k(N_m) < d\) for sufficiently large \(m\).
    \end{enumerate}
\end{thm}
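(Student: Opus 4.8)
The plan is to prove the cycle of implications $(1)\Rightarrow(2)\Rightarrow(3)\Rightarrow(1)$; the first implication is where all the machinery of the previous sections enters, the other two being short.

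For $(1)\Rightarrow(2)$ I would argue as follows. Since $\btwo{d}(G;k)=0$ is finite, the coincidence of the $\Dk G$-homology and $\Dk G$-cohomology Betti numbers gives $\H^d(G;\Dk G)=0$, that is, $\H^d(P_\bullet;\Dk G)=0$ for a resolution $0\to P_d\to\cdots\to P_0\to k\to 0$ witnessing type $\FTP_d(k)$, with $P_d$ finitely generated. Let $(K_l)_{l\geqslant 0}$ be an arbitrary normal residual chain with each $G/K_l$ poly-$\Z$ virtually nilpotent. Applying \cref{prop:RPVN} produces a normal residual chain $(N_i)$ refining $(K_l)$ together with an interlaced chain of finite-index subgroups $(G_i)$, with each $G_i/N_i$ torsion-free nilpotent. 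Feeding $P_\bullet$ and these interlaced chains into \cref{thm:L2_novikov} yields an index $m$ and a multicharacter $\chi$ on $G_m/N_m$ with $\H^d(G_m;\nov{kG_m}{\pm\chi})=0$. Since $[G:G_m]<\infty$, the subgroup $G_m$ is of type $\FP_d(k)$ and satisfies $\cd_k(G_m)\leqslant\cd_k(G)\leqslant d$, hence is of type $\FTP_d(k)$; thus \cref{thm:cd_kernel}, applied to $G_m\onto G_m/N_m$, gives $\cd_k(N_m)<d$. As $(N_i)$ is descending, $\cd_k(N_{m'})<d$ for all $m'\geqslant m$; and since $(N_i)$ refines $(K_l)$ while both chains have trivial intersection, $K_l\leqslant N_m$ for all large $l$, so $\cd_k(K_l)<d$ for such $l$.

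The implication $(2)\Rightarrow(3)$ is immediate from $\hd_k(H)\leqslant\cd_k(H)$. For $(3)\Rightarrow(1)$, assume (3); then $G$ is nontrivial, hence infinite (a finite RPVN group being trivial). Pick any normal residual chain $(N_i)$ of $G$ with $G/N_i$ poly-$\Z$ virtually nilpotent, which exists because $G$ is countable and RPVN. By (3), $\hd_k(N_m)<d$ for $m$ large, so $\H_d(N_m;\Dk{N_m})=0$ and $\btwo{d}(N_m;k)=0<\infty$. As poly-$\Z$ virtually nilpotent quotients are torsion-free, any finite-index term of such a chain must equal $G$, so $[G:N_m]=\infty$ for all large $m$; hence $G/N_m$ is infinite and amenable, and \cref{prop:divring_gaboriau} (valid since $G$ is locally indicable and $\Dk G$ exists) forces $\btwo{d}(G;k)=0$.

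The main obstacle is the step $(1)\Rightarrow(2)$, but with \cref{prop:RPVN}, \cref{thm:L2_novikov}, and \cref{thm:cd_kernel} in hand it is essentially an assembly. The points that require care are checking that the finite-index subgroup $G_m$ genuinely inherits type $\FTP_d(k)$, so that \cref{thm:cd_kernel} may be invoked for it, and the bookkeeping that transports the dimension-drop conclusion from the refined chain $(N_i)$ back to the arbitrarily given chain $(K_l)$.
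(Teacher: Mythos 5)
Your proof is correct and follows essentially the same route as the paper: reduce via \cref{prop:RPVN} to an interlaced pair of chains, apply \cref{thm:L2_novikov} to get a multicharacter with vanishing top Novikov cohomology, invoke \cref{thm:cd_kernel} to drop the dimension, and close the cycle $(3)\Rightarrow(1)$ with \cref{prop:divring_gaboriau}. If anything, you supply slightly more bookkeeping than the paper does — checking that $G_m$ inherits type $\FTP_d(k)$, and transporting the dimension drop from the refined chain $(N_i)$ back to the arbitrary chain $(K_l)$ — but this is the same argument, not a different one.
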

\begin{proof}
    By \cref{prop:RPVN}, it is enough to prove the claim in the case where there is a normal residual chain \((G_i)\) such that \((G_i)\) and \((N_i)\) are interlaced. Then \cref{thm:L2_novikov} implies there is \(m \in \N\) and a multicharacter \(\chi\) on \(G_m/N_m\) such that 
    \[
        \H^d(G_m; \nov{kG_m}{\pm \chi}) = 0.
    \]
    But then \(\cd_k(N_m) < d\) by \cref{thm:cd_kernel}.
    
    If \(\cd_k(N_m) < d\), then \(\hd_k(N_m) < d\) since the homological dimension of a group is always bounded above by its cohomological dimension. Finally, if \(\hd_k(G) < d\), then \(\btwo{d}(N_m;k) = 0 < \infty\). Since \(G/N_m\) is torsion-free and elementary amenable, \(\btwo{d}(G;k) = 0\) by \cref{prop:divring_gaboriau}. \qedhere
\end{proof}

We now apply our result to the class of residually torsion-free nilpotent (RTFN) groups and prove \cref{cor:B}, and to two-dimensional RPVN groups and prove \cref{cor:C}.

\begin{cor}\label{cor:RTFN}
    Let \(G\) be a finitely generated RTFN group of type \(\FTP_d(k)\) for some field \(k\). The following are equivalent:
    \begin{enumerate}
        \item \(\btwo{d}(G;k) = 0\);
        \item \(\cd_k(\gamma_i(G)) < d\) for \(i\) sufficiently large;
        \item \(\cd_k(\overline\gamma_j(G)) < d\) for \(j\) sufficiently large.
    \end{enumerate}
\end{cor}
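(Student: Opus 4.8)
The plan is to deduce \cref{cor:RTFN} from \cref{thm:topL2_cd_drop} by recognising the canonical chain $(\overline{\gamma}_j(G))_{j\geqslant 0}$ as an admissible residual chain for that theorem, and then comparing it with $(\gamma_i(G))_{i\geqslant 0}$ by an elementary argument. First I would dispose of the degenerate cases: if $\cd_k(G) < d$, then $\hd_k(G) < d$, so $\btwo{d}(G;k) = 0$, while $\gamma_0(G) = \overline{\gamma}_0(G) = G$ witnesses (2) and (3), so all three statements hold; and if $d = 0$, then $\cd_k(G) = 0$ forces $G$ to be finite, hence trivial since $G$ is torsion-free, and none of (1)--(3) holds. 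Thus I may assume $\cd_k(G) = d \geqslant 1$; in particular $G$ is a nontrivial RTFN group, so it has a nontrivial finitely generated torsion-free nilpotent quotient, whence $G\ab$ is infinite, and therefore every quotient $G/\gamma_i(G)$ and $G/\overline{\gamma}_j(G)$ with $i,j \geqslant 1$ is an infinite, finitely generated, amenable group.

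Next I would run the implications as a cycle. For \textbf{(1) $\Leftrightarrow$ (3)}: since $G$ is RTFN it is RPVN, the chain $(\overline{\gamma}_j(G))$ is a normal residual chain (residuality being exactly the characterisation of RTFN recorded in the introduction), and each $G/\overline{\gamma}_j(G)$ is finitely generated torsion-free nilpotent, hence poly-$\Z$ and virtually nilpotent; so \cref{thm:topL2_cd_drop} applies to this chain and yields the equivalence. For \textbf{(3) $\Rightarrow$ (2)}: this is immediate, as $\gamma_j(G) \leqslant \overline{\gamma}_j(G)$ gives $\cd_k(\gamma_j(G)) \leqslant \cd_k(\overline{\gamma}_j(G))$. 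For \textbf{(2) $\Rightarrow$ (1)}: fixing $i \geqslant 1$ with $\cd_k(\gamma_i(G)) < d$, the subgroup $\gamma_i(G)$ is normal in $G$ with infinite amenable quotient $G/\gamma_i(G)$, and $\hd_k(\gamma_i(G)) \leqslant \cd_k(\gamma_i(G)) < d$ gives $\btwo{d}(\gamma_i(G);k) = 0 < \infty$, whence $\btwo{d}(G;k) = 0$ by \cref{prop:divring_gaboriau}. This closes the cycle (1) $\Rightarrow$ (3) $\Rightarrow$ (2) $\Rightarrow$ (1).

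The one step needing genuine care — the main, and rather minor, obstacle — is verifying that $(\overline{\gamma}_j(G))$ is a legitimate input to \cref{thm:topL2_cd_drop}: that $\bigcap_j \overline{\gamma}_j(G) = \{1\}$ (the definition of RTFN) and that each $G/\overline{\gamma}_j(G)$ is poly-$\Z$ and virtually nilpotent. The latter rests on the standard fact that $\overline{\gamma}_j(G)/\gamma_j(G)$ is precisely the finite torsion subgroup of the finitely generated nilpotent group $G/\gamma_j(G)$, so that $G/\overline{\gamma}_j(G)$ is finitely generated, torsion-free, and nilpotent, hence poly-$\Z$. Everything else is a formal consequence of \cref{thm:topL2_cd_drop} together with the division-ring version of Gaboriau's theorem, \cref{prop:divring_gaboriau}.
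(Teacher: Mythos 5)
Your proof is correct and follows essentially the same route as the paper: apply \cref{thm:topL2_cd_drop} to the chain $(\overline{\gamma}_j(G))$ to get $(1)\Leftrightarrow(3)$, deduce $(3)\Rightarrow(2)$ from the inclusion $\gamma_j(G)\leqslant\overline{\gamma}_j(G)$, and close the cycle $(2)\Rightarrow(1)$ via \cref{prop:divring_gaboriau}. The only real difference is that you explicitly verify that $G/\gamma_i(G)$ is infinite before invoking Gaboriau's theorem, a point the paper leaves implicit.
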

\begin{proof}
    That (1) implies (3) follows immediately from \cref{thm:topL2_cd_drop}, and that (3) implies (2) follows immediately from the fact that \(\gamma_i(G) \leqslant \overline\gamma_i(G)\) for all \(i\). If (2) holds, then \(G\) has a nilpotent quotient with kernel \(\gamma_i(G)\) of cohomological dimension at most \(d-1\) over \(k\) for some \(i\), and therefore \(\btwo{d}(G;k) = 0\). Then \(\btwo{d}(G;k) = 0\) by \cref{prop:divring_gaboriau}. \qedhere
\end{proof}

Since group rings are left coherent if and only if they are right coherent, we will simply say that \(kG\) is \emph{coherent} if and only if it is left or right coherent.

\begin{cor}\label{cor:2dim_coherence}
    Let \(G\) be a finitely generated RPVN group with \(\cd_k(G) \leqslant 2\) for some field \(k\). The following are equivalent:
    \begin{enumerate}[label=(\arabic*)]
        \item\label{item:btwo2} \(\btwo{2}(G;k) = 0\);
        \item\label{item:free_by_LIVN} \(G\) is free-by-(poly-\(\Z\) and virtually nilpotent).
    \end{enumerate}
    In particular, if \(G\) is any RPVN group and \(\btwo{2}(G;k) = 0\), then \(G\) is a coherent group and \(kG\) is a coherent ring.
\end{cor}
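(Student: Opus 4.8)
The plan is to prove the two implications of the equivalence and then to read off the coherence statements from the structural description in \ref{item:free_by_LIVN}.

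I would start with \ref{item:free_by_LIVN}$\Rightarrow$\ref{item:btwo2}, the soft direction. Write $\ses{F}{G}{Q}$ with $F$ free and $Q$ poly-$\Z$ virtually nilpotent. If $Q$ is finite then $G$ is virtually free and, being RPVN, torsion-free, hence free, so $\hd_k(G)\leqslant 1<2$ and $\btwo{2}(G;k)=0$. If $Q$ is infinite then it is elementary amenable, while $F$ free gives $\hd_k(F)\leqslant 1$, so $\btwo{2}(F;k)=0<\infty$, and \cref{prop:divring_gaboriau} yields $\btwo{2}(G;k)=0$.

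For \ref{item:btwo2}$\Rightarrow$\ref{item:free_by_LIVN}: if $\cd_k(G)\leqslant 1$ then $G$ is free (a finitely generated locally indicable group of cohomological dimension at most one over $k$ is free) and the trivial group is poly-$\Z$ virtually nilpotent, so $G$ is free-by-(poly-$\Z$ virtually nilpotent). So assume $\cd_k(G)=2$. The first step is to verify that $G$ is of type $\FTP_2(k)$; as $\cd_k(G)=2$, this is equivalent to finite presentability of $G$, and it is here that the hypothesis $\btwo{2}(G;k)=0$ must be exploited beyond the bare cohomological-dimension bound --- presumably by controlling the relation module of $G$ after base change to $\Dk{G}$. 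Granting this, I would fix a normal residual chain $(N_i)$ with each $G/N_i$ poly-$\Z$ virtually nilpotent (one exists as $G$ is countable and RPVN) and apply \cref{thm:topL2_cd_drop}: $\btwo{2}(G;k)=0$ then produces an index $m$ with $\cd_k(N_m)<2$. Since $N_m\leqslant G$ is again RPVN, hence locally indicable, the inequality $\cd_k(N_m)\leqslant 1$ forces $N_m$ to be free, and $\ses{N_m}{G}{G/N_m}$ presents $G$ as free-by-(poly-$\Z$ virtually nilpotent).

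For the coherence statements, note that a poly-$\Z$ virtually nilpotent group is polycyclic, so the equivalence identifies a finitely generated RPVN group $G$ with $\cd_\Q(G)\leqslant 2$ and $\btwo{2}(G;\Q)=0$ as free-by-polycyclic; by the coherence of free-by-polycyclic groups of rational cohomological dimension $2$, obtained elsewhere in the paper from the results of Jaikin-Zapirain and Linton, such a $G$ is coherent and $kG$ is coherent for every field $k$. Dropping finite generation, coherence is a local property, so one passes to a finitely generated subgroup $H\leqslant G$, which is again finitely generated RPVN with $\cd_k(H)\leqslant 2$ and $\btwo{2}(H;k)=0$, hence free-by-polycyclic and therefore finitely presented. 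The main obstacle is the implication \ref{item:btwo2}$\Rightarrow$\ref{item:free_by_LIVN}, in two respects: (i) promoting the hypotheses to type $\FTP_2(k)$, i.e.\ proving $G$ finitely presented --- this is genuinely necessary, since there exist finitely generated RPVN, even RFRS, groups of cohomological dimension $2$ that are not of type $\FP_2$ --- and (ii) upgrading $\cd_k(N_m)\leqslant 1$ to freeness of the possibly infinitely generated conilpotent kernel $N_m$; the coherence conclusion further relies on the separately established coherence of free-by-polycyclic groups of rational cohomological dimension $2$.
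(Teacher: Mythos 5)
Your route coincides with the paper's: Theorem \ref{thm:topL2_cd_drop} plus Swan's theorem for \ref{item:btwo2}~$\Rightarrow$~\ref{item:free_by_LIVN}, Proposition \ref{prop:divring_gaboriau} for the converse, and a reduction to the free-by-polycyclic case for coherence. (Your separate treatment of the case where the quotient $Q$ is finite in \ref{item:free_by_LIVN}~$\Rightarrow$~\ref{item:btwo2} is a small precision the paper elides, since Proposition \ref{prop:divring_gaboriau} needs $Q$ infinite.) The gap you flag in \ref{item:btwo2}~$\Rightarrow$~\ref{item:free_by_LIVN} is genuine, and the paper closes it by citation: by a theorem of Jaikin-Zapirain and Linton, $\btwo{2}(G;k)=0$ already forces $G$ to be of type $\FP_2(k)$, which together with $\cd_k(G)\leqslant 2$ gives type $\FTP_2(k)$ and lets Theorem \ref{thm:topL2_cd_drop} run. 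Note that your remark that type $\FTP_2(k)$ ``is equivalent to finite presentability'' overshoots: one only needs $\FP_2(k)$, which is weaker than finite presentation, and the finite presentability of $G$ is a consequence of the coherence established afterwards, not a prerequisite.

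Two smaller omissions. In the reduction to the finitely generated case for coherence you assert $\btwo{2}(H;k)=0$ for every subgroup $H\leqslant G$ without justification; this is correct but requires its own input (the paper cites a lemma of Fisher--Morales). And the ``coherence of free-by-polycyclic groups of rational cohomological dimension~$2$'' that you treat as an external black box is in fact proved inside this very corollary: the paper shows that $\Dk{G}$ has weak dimension at most one over $kG$, concludes $kG$ is coherent from a criterion of Jaikin-Zapirain--Linton, and then deduces group coherence by induction on Hirsch length, splitting $G$ as an ascending HNN extension and invoking their homological-coherence theorem.
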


\begin{rem}
    If \(G\) is RTFN, then we can conclude that \(G\) is free-by-(torsion-free nilpotent) in \ref{item:free_by_LIVN}.
\end{rem}

We thank Marco Linton for showing us the argument that a free-by-polycyclic group of cohomological dimension \(2\) is coherent.

\begin{proof}[Proof (of \cref{cor:2dim_coherence})]
    Suppose \ref{item:btwo2} holds. Then \(G\) is of type \(\FP_2(k)\) by \cite[Theorem 3.7]{JaikinLinton_coherence}, and therefore it is of type \(\FTP_2(k)\). Thus, \ref{item:free_by_LIVN} follows from \cref{thm:topL2_cd_drop} applied at \(d = 2\) and Swan's theorem, which states that torsion-free groups of cohomological dimension one over any field are free \cite{Swan_cd1}. If \ref{item:free_by_LIVN}, then \(\btwo{2}(G;k) = 0\) by \cref{prop:divring_gaboriau}.

    The equivalence of Items \ref{item:btwo2} and \ref{item:free_by_LIVN} follows from \cref{thm:topL2_cd_drop} applied at \(d = 2\), and Swan's Theorem, which states that torsion-free groups of cohomological dimension \(1\) over any field are free \cite{Swan_cd1}.

    We now turn to the statements about coherence. It suffices to assume that \(G\) is finitely generated, since \(\btwo{2}(H;k) = 0\) for all subgroups of \(H \leqslant G\) by \cite[Lemma 3.21]{FisherMorales_HNC}, and a group \(G\) (resp.~group algebra \(kG\)) is coherent if and only if \(H\) (resp.~\(kH\)) is coherent for all finitely generated subgroups \(H \leqslant G\). Hence, we can assume that \(G\) is free-by-(poly-\(\Z\)). It is then almost immediate that \(\Dk{G}\) is of weak dimension at most one as a \(kG\)-module (see \cite[Lemma 3.4]{Fisher_freebyZ}), and therefore \(kG\) is coherent by \cite[Corollary 3.2]{JaikinLinton_coherence}.
    
    Finally, we show that \(G\) is a coherent group. As \(G\) is free-by-(poly-\(\Z\)), we prove coherence of \(G\) by induction on the Hirsch length of the poly-\(\Z\) quotient. If the Hirsch length is \(0\), then \(G\) is free and therefore coherent. Now suppose that the Hirsch length is \(n > 0\). Then \(G\) splits as an ascending HNN extension of a group \(H\), which is free-by-(poly-\(\Z\) of Hirsch length \(n-1\)). By induction, \(H\) is coherent. By \cite[Theorem 1.3]{JaikinLinton_coherence}, it suffices to show that \(G\) is homologically coherent, i.e.~that every finitely generated subgroup of \(G\) is of type \(\FP_2(k)\). But this follows from the fact that \(kG\) is coherent. \qedhere
\end{proof}

Before giving some applications to homology growth and towards the Parafree Conjecture, we prove a lemma, which relates \(b_2(G)\) and \(\btwo{2}(G)\) for finitely generated groups of cohomological dimension \(2\). Had we assumed that \(G\) were finitely presented, or even of type \(\FP_2(\Q)\), the lemma would be an immediate consequence of \cite[Corollary 1.6]{JaikinZapirain2020THEUO}. We use Cohn's theory of specialisations between division rings to weaken the assumption (see \cref{subsec:Rdivrings}). Our proof is inspired by that of \cite[Proposition 2.2]{JaikinMorales_surfaceProfRigid}.

\begin{lem}\label{lem:removeFP2}
    Let \(G\) be a finitely generated group, let \(k\) be a field, and let \(\mathcal D_1\) and \(\mathcal D_2\) be \(kG\)-division rings such that there is a specialisation \(\rho \colon \mathcal D_1 \rightarrow \mathcal D_2\). If \(\cd_k(G) \leqslant 2\), then \(b_2(G; \mathcal D_1) \leqslant b_2(G; \mathcal D_2)\).
\end{lem}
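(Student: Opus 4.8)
The plan is to write $b_2(G;\mathcal D) = \dim_{\mathcal D}\H_2(G;\mathcal D)$ purely in terms of the ranks over $\mathcal D$ of certain matrices with entries in $kG$, and then apply the Cohn--Malcolmson rank comparison \cref{thm:Malcolmson}. Since $G$ is finitely generated, it is of type $\FP_1(k)$, so the trivial module $k$ has the beginning of a free resolution $F_1 \xrightarrow{\partial_1} F_0 \xrightarrow{\varepsilon} k \to 0$ with $F_0 = kG$ and $F_1 = (kG)^r$ finitely generated free (concretely, $r$ is the size of a generating set and $\partial_1$ is the column matrix with entries $s_i - 1$). Because $\cd_k(G) \leqslant 2$, the second syzygy $Z := \ker\partial_1$ is projective, so $0 \to Z \xrightarrow{\iota} F_1 \xrightarrow{\partial_1} F_0 \xrightarrow{\varepsilon} k \to 0$ is a projective resolution of $k$. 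I would then express $Z$ as the directed union $Z = \bigcup_{j \in J} S_j$ of its finitely generated submodules, choose for each $j$ a surjection $(kG)^{n_j} \onto S_j$, and let $M_j$ denote the matrix over $kG$ of the composite $(kG)^{n_j} \onto S_j \into F_1$.

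For an arbitrary $kG$-division ring $\mathcal D$, applying $\mathcal D \otimes_{kG} -$ to this resolution and taking homology in degree $2$ gives
\[
    \H_2(G;\mathcal D) \;\cong\; \ker(\mathcal D \otimes \partial_1)\big/\im(\mathcal D \otimes \iota).
\]
Since $\mathcal D \otimes_{kG} -$ is right exact and commutes with directed colimits, $\im(\mathcal D \otimes \iota) = \bigcup_{j} \im(\mathcal D \otimes M_j)$; and since $F_1$ is finitely generated, $\mathcal D \otimes_{kG} F_1 \cong \mathcal D^r$, so all of these are finite-dimensional $\mathcal D$-vector spaces. This yields the key formula
\[
    b_2(G;\mathcal D) \;=\; \bigl(r - \rk_{\mathcal D}(\partial_1)\bigr) \;-\; \sup_{j \in J} \rk_{\mathcal D}(M_j),
\]
in which $\partial_1$ and the $M_j$ are fixed matrices over $kG$, independent of $\mathcal D$. (If instead $b_2(G;\mathcal D)$ is read as $\dim_{\mathcal D}\H^2(G;\mathcal D)$, the same formula results after invoking $\H^2(G;\mathcal D)\cong\Hom_{\mathcal D}(\H_2(G;\mathcal D),\mathcal D)$, which holds here since $Z$ is projective and $\mathcal D$ is a division ring.)

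To finish, note that the specialisation $\rho \colon \mathcal D_1 \to \mathcal D_2$ places $\mathcal D_1 \geqslant \mathcal D_2$ in the poset of $kG$-division rings, so \cref{thm:Malcolmson} gives $\rk_{\mathcal D_1}(M) \geqslant \rk_{\mathcal D_2}(M)$ for every matrix $M$ over $kG$. Applying this to $\partial_1$ and to each $M_j$ (whence also $\sup_j \rk_{\mathcal D_1}(M_j) \geqslant \sup_j \rk_{\mathcal D_2}(M_j)$) and substituting into the formula above gives $b_2(G;\mathcal D_1) \leqslant b_2(G;\mathcal D_2)$.

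I expect the main point to be the absence of a type $\FP_2(k)$ hypothesis: the second syzygy $Z$, and hence the second boundary map, need not be finitely generated, so it cannot be handled as a single finite matrix. Exhausting $Z$ by its finitely generated submodules replaces this term by a supremum of ranks of genuine finite matrices over $kG$ — to which \cref{thm:Malcolmson} does apply — and verifying that this supremum behaves correctly under $\mathcal D \otimes_{kG} -$ (right exactness, compatibility with directed colimits, and finite-dimensionality coming from $F_1$ being finitely generated) is the step that needs to be set up with care; everything else is then a term-by-term comparison.
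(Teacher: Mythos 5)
There is a genuine gap, in fact two.

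First, a degree error. With the resolution \(0 \to Z \xrightarrow{\iota} F_1 \xrightarrow{\partial_1} F_0 \to k \to 0\), applying \(\mathcal D \otimes_{kG} -\) gives the complex \(\mathcal D \otimes Z \xrightarrow{\mathcal D\otimes\iota} \mathcal D \otimes F_1 \xrightarrow{\mathcal D\otimes\partial_1} \mathcal D \otimes F_0\) with \(Z\) sitting in degree \(2\). The homology at the \(F_1\) position, which is what your formula \(\ker(\mathcal D\otimes\partial_1)/\im(\mathcal D\otimes\iota)\) computes, is \(\H_1(G;\mathcal D)\). The group you actually want is \(\H_2(G;\mathcal D) = \ker\bigl(\mathcal D\otimes\iota \colon \mathcal D\otimes Z \to \mathcal D\otimes F_1\bigr)\). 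Your ``key formula'' \(b_2 = (r - \rk_{\mathcal D}\partial_1) - \sup_j\rk_{\mathcal D}(M_j)\) is therefore a formula for \(b_1\); indeed, your argument does prove \(b_1(G;\mathcal D_1) \leqslant b_1(G;\mathcal D_2)\), but that is a weaker (and easier) statement that does not use \(\cd_k(G)\leqslant 2\).

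Second, and more seriously, the approach as written cannot access \(\H_2\) because it never controls \(\dim_{\mathcal D}(\mathcal D\otimes_{kG} Z)\). The correct formula is \(b_2(G;\mathcal D) = \dim_{\mathcal D}(\mathcal D\otimes Z) - \sup_j\rk_{\mathcal D}(M_j)\), and a priori the first term is a cardinal that can depend on \(\mathcal D\): \(\mathcal D\otimes Z\) is free over the division ring \(\mathcal D\), but its rank need not be the same for \(\mathcal D_1\) and \(\mathcal D_2\). Exhausting \(Z\) by finitely generated submodules only gives you \(\dim\im(\mathcal D\otimes\iota)\), not \(\dim(\mathcal D\otimes Z)\). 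The paper's proof gets around this precisely by exploiting the local ring \(D\subseteq\mathcal D_1\) that is the domain of the specialisation \(\rho\): since \(Z\) is projective, \(D\otimes_{kG}Z\) is projective over the local ring \(D\), hence free of some rank \(\alpha\) by Kaplansky's theorem, and then \(\mathcal D_i\otimes_{kG}Z \cong \mathcal D_i\otimes_D(D\otimes_{kG}Z)\cong\mathcal D_i^{\alpha}\) with the \emph{same} \(\alpha\) for \(i=1,2\). From the hypothesis \(b_2(G;\mathcal D_2)<\infty\) and the finite generation of \(F_1\) one then deduces \(\alpha<\infty\), replaces \(Z\) by a finitely generated free module \(F\) with \(\mathcal D_2\otimes F \to \mathcal D_2\otimes Z\) an isomorphism, checks (again via \(\rk_{\mathcal D_1}\geqslant\rk_{\mathcal D_2}\) over \(D\)) that \(\mathcal D_1\otimes F\to\mathcal D_1\otimes Z\) is also an isomorphism, and only then makes a single rank comparison on the boundary matrix \(\partial\colon F\to P_1\). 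The passage through \(D\) and Kaplansky is the content of the lemma; your proposal omits it entirely, and without it there is no way to compare the ranks of \(\mathcal D_1\otimes Z\) and \(\mathcal D_2\otimes Z\).
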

\begin{proof}
    There is nothing to show if \(b_2(G; \mathcal D_2) = \infty\), so we assume \(b_2(G;\mathcal D_2) < \infty\). Let \(0 \rightarrow P_2 \rightarrow P_1 \rightarrow P_0 \rightarrow k \rightarrow 0\) be a projective resolution of the trivial \(kG\)-module \(k\), where \(P_0\) and \(P_1\) are finitely generated free \(kG\)-modules. Let \(D \subseteq \mathcal D_1\) be the domain of \(\rho\). Since projective modules over local rings are free by Kaplansky's Theorem \cite{Kaplansky_ProjectiveModules}, there is some cardinal \(\alpha\) such that \(D \otimes_{kG} P_2 \cong D^{\oplus\alpha}\) as left \(D\)-modules, and therefore
    \[
        \mathcal D_i \otimes_{kG} P_2 \cong \mathcal D_i \otimes_D D \otimes_{kG} \otimes P_i \cong \mathcal D_i^{\oplus \alpha}
    \]
    for \(i = 1,2\). The assumptions that \(b_2(G;\mathcal D_2) < \infty\) and that \(P_1\) is finitely generated imply that \(\alpha < \infty\).

    Since \(\alpha < \infty\), there is a finitely generated free \(kG\)-module \(F\) and a homomorphism \(\partial \colon F \rightarrow P_2\) such that  
    \[
        \mathcal D_2 \otimes_{kG} F \rightarrow \mathcal D_2 \otimes_{kG} P_2 \cong \mathcal D_2^\alpha
    \]
    is an epimorphism. By passing to a free submodule of \(F\), we may assume that the above map is an isomorphism. For \(i = 1,2\), the maps \(\mathcal D_i \otimes_{kG} F \rightarrow \mathcal D_i \otimes_{kG} P_2\) are obtained by applying \(\mathcal D_i \otimes_D -\) to the map of finitely generated free \(D\)-modules \(D \otimes_{kG} F \rightarrow D \otimes_{kG} P_2\). But \(\rho\) is also a specialisation of \(D\)-division rings, so the rank functions still satisfy \(\rk_{\mathcal D_1} \geqslant \rk_{\mathcal D_2}\) when viewed as rank functions on \(D\) by \cref{thm:Malcolmson}. It follows that 
    \[
        \mathcal D_1 \otimes_{kG} F \rightarrow \mathcal D_1 \otimes_{kG} P_2 \cong \mathcal D_1^\alpha
    \]
    is also an isomorphism, and therefore that the chain complexes
    \[
        0 \rightarrow \mathcal D_i \otimes_{kG} F \rightarrow \mathcal D_i \otimes_{kG} P_1 \rightarrow \mathcal D_i \otimes_{kG} P_0 \rightarrow 0
    \]
    compute the Betti numbers \(b_n(G;\mathcal D_i)\) for both \(i = 1,2\). Once again using the fact that \(\rk_{\mathcal D_1} \geqslant \rk_{\mathcal D_2}\), we have
    \[
        b_2(G; \mathcal D_1) = \alpha - \rk_{\mathcal D_1} \partial \leqslant \alpha - \rk_{\mathcal D_2} \partial = b_2(G; \mathcal D_2). \qedhere
    \]
\end{proof}

As a consequence, we obtain the following criterion for finite presentability of a \(2\)-dimensional group, which may be of independent interest.

\begin{cor}\label{cor:bZero_b2Zero}
    Let \(G\) be a finitely generated RPVN group of \(\cd_k(G) \leqslant 2\) for some field \(k\). If \(b_2(G;k) = 0\), then \(G\) is coherent and, in particular, is finitely presented. More generally, if there is a subgroup \(H \leqslant G\) of finite index such that \(b_2(H;k) < [G:H]\), then \(G\) is coherent.
\end{cor}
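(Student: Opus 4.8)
The plan is to deduce from the hypothesis that \(\btwo{2}(G;k) = 0\) and then invoke \cref{cor:2dim_coherence}. The first observation is that every RPVN group is locally indicable: a nontrivial finitely generated subgroup of \(G\) is again RPVN, so it has a nontrivial poly-\(\Z\) virtually nilpotent quotient \(Q\), and any nontrivial poly-\(\Z\) group surjects onto \(\Z\) (in a subnormal series \(1 = Q_n \trianglelefteq \dots \trianglelefteq Q_0 = Q\) with infinite cyclic quotients the term \(Q_1\) is normal in \(Q\) with \(Q/Q_1 \cong \Z\)). Hence \(\Dk{G}\) exists, and since \(G\) is locally indicable it is the universal \(kG\)-division ring by \cite{JaikinZapirain2020THEUO}. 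As the augmentation \(kG \to k\) exhibits \(k\) as a \(kG\)-division ring, universality (together with \cref{thm:Malcolmson}) provides a specialisation \(\rho \colon \Dk{G} \to k\).

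Next I would apply \cref{lem:removeFP2} with \(\mathcal D_1 = \Dk{G}\) and \(\mathcal D_2 = k\): since \(\cd_k(G) \leqslant 2\), it yields \(\btwo{2}(G;k) = b_2(G;\Dk{G}) \leqslant b_2(G;k) = \dim_k \H_2(G;k)\), which vanishes by hypothesis, so \(\btwo{2}(G;k) = 0\). By \cref{cor:2dim_coherence}, \(G\) is coherent, and being finitely generated it is then finitely presented. For the more general statement, I would run the same argument with \(G\) replaced by the finitely generated RPVN group \(H\) (noting \(\cd_k(H) \leqslant \cd_k(G) \leqslant 2\) and that \(H\), as a finite-index subgroup, is RPVN hence locally indicable, so \(\Dk{H}\) is also universal); this gives \(\btwo{2}(H;k) \leqslant b_2(H;k) < [G:H]\). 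Since the \(\Dk{G}\)-Betti numbers scale by the index, \(\btwo{2}(H;k) = [G:H]\cdot \btwo{2}(G;k)\), so the non-negative integer \(\btwo{2}(G;k)\) is strictly less than \(1\) and therefore equal to \(0\); hence \(G\) is again coherent by \cref{cor:2dim_coherence}.

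The step that I expect to require the most care is the existence of the specialisation \(\Dk{G} \to k\): one must know that over a locally indicable group the Hughes-free division ring realises the maximal rank function \(\rk_{\Dk{G}}\) on \(kG\), so that it dominates every \(kG\)-division ring in the poset of \cref{subsec:Rdivrings}, in particular the trivial one coming from the augmentation. The remaining ingredients are routine: the identifications \(b_2(G;\Dk{G}) = \btwo{2}(G;k)\) and \(b_2(G;k) = \dim_k \H_2(G;k)\), the scaling formula for \(\Dk{G}\)-Betti numbers, and the inequality supplied by \cref{lem:removeFP2}.
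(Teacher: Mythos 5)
Your proposal is correct and follows essentially the same route as the paper: use \cite[Corollary 1.3]{JaikinZapirain2020THEUO} to identify \(\Dk{G}\) as the universal \(kG\)-division ring, specialise to \(k\), bound \(\btwo{2}\) by \(b_2\) via \cref{lem:removeFP2}, and conclude with \cref{cor:2dim_coherence} plus the index-scaling formula. The only cosmetic differences are that you spell out why RPVN groups are locally indicable (which the paper establishes earlier) and you write the intended inequality chain \(\btwo{2}(G;k) = \btwo{2}(H;k)/[G:H] \leqslant b_2(H;k)/[G:H] < 1\) cleanly, whereas the paper's displayed line contains a small typo repeating the same fraction on both sides of the \(\leqslant\).
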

\begin{proof}
    If \(G\) is RPVN, then \(\Dk{G}\) is the universal division ring of \(kG\) \cite[Corollary 1.3]{JaikinZapirain2020THEUO}, so there is a specialisation \(\Dk{G} \rightarrow k\) of \(kG\)-division rings. Then 
    \[
        \btwo{2}(G;k) = \frac{b_2(H;k)}{[G:H]} \leqslant \frac{b_2(H;k)}{[G:H]} < 1
    \]
    by \cref{lem:removeFP2}. But \(\btwo{2}(G;k)\) is an integer, so \(\btwo{2}(G;k) = 0\) and the corollary then follows from \cref{lem:removeFP2} and \cref{cor:2dim_coherence}. \qedhere
\end{proof}

For a general RPVN group \(G\) of \(\cd_k(G) = d\), the Betti numbers \(\btwo{n}(G;k)\) may depend on the characteristic of \(k\). This even happens for RFRS groups as the following example shows. Let \(L\) be a flag complex which is homeomorphic to the space obtained by gluing the boundary of an \((n-1)\)-ball to \(S^{n-2}\) via a degree \(p\) map, for some prime \(p\). Then \(b_{n-1}(L;\Q) = 0\) but \(b_{n-1}(G; \mathbb{F}_p) = 1\). Let \(G\) be the RAAG on the one-skeleton of \(L\). Then \(\cd_\Q(G) = \cd_{\mathbb F_p}(G) = n\) and
\[
    0 = \btwo{n}(G;\Q) < \btwo{n}(G;\mathbb F_p) = 1.
\]
However, \cref{cor:2dim_coherence} shows that this phenomenon does not occur in dimension \(2\): if \(G\) is a finitely generated RPVN group of \(\cd(G) = 2\), then \(\btwo{2}(G;k)\) is either zero for all fields \(k\) or is positive for all fields \(k\). We thus obtain the following application to homology growth of \(2\)-dimensional groups in positive characteristic.

\begin{cor}
    Let \(G\) be a finitely generated RPVN group with \(\cd_\Z(G) \leqslant 2\). The quantity
    \[
        \beta_2(G;k) := \inf\left\{ \frac{b_2(H;k)}{[G:H]} \ : \ H \leqslant G, \ [G:H] < \infty \right\}
    \]
    is either zero for all fields \(k\) or it is positive for all fields \(k\).
\end{cor}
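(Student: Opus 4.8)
The plan is to argue by contraposition: if $\beta_2(G;k) = 0$ for a single field $k$, I will show $\beta_2(G;k') = 0$ for \emph{every} field $k'$, which gives the dichotomy. Suppose then that $\beta_2(G;k) = 0$. First I would extract a Novikov-style consequence exactly as in the proof of \cref{cor:bZero_b2Zero}: since the infimum is $0$ there is a finite-index $H \leqslant G$ with $b_2(H;k) < [G:H]$, and combining the scaling formula $\btwo 2(H;k) = [G:H]\,\btwo 2(G;k)$ with \cref{lem:removeFP2} applied to the specialisation $\Dk G \to k$ yields $\btwo 2(G;k) = 0$. By \cref{cor:2dim_coherence}, $G$ is then free-by-(poly-$\Z$ and virtually nilpotent); crucially this structural conclusion does not mention $k$, so the converse implication in \cref{cor:2dim_coherence} already gives $\btwo 2(G;k') = 0$ for \emph{all} fields $k'$, and $G$ is coherent (hence finitely presented, being finitely generated). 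The entire problem has thus been reduced to upgrading $\btwo 2(G;k') = 0$ to $\beta_2(G;k') = 0$.

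For this I would use the structure directly. After passing to a finite-index subgroup $G_0$ — legitimate because finite-index subgroups of $G_0$ are finite-index in $G$, so $\beta_2(G;k') \leqslant \beta_2(G_0;k')/[G:G_0]$ — I may assume $1 \rightarrow N \rightarrow G_0 \rightarrow Q \rightarrow 1$ with $N$ free and $Q$ torsion-free poly-$\Z$ of Hirsch length $h$. Since $Q$ is a Poincaré duality group of dimension $h$ over $\Z$, one has $\cd_\Z(G_0) = \cd_\Z(N) + h$, so the hypothesis $\cd_\Z(G) \leqslant 2$ forces $h \leqslant 2$, and $h \leqslant 1$ whenever $N \neq 1$. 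Hence $G_0$ is either free, or torsion-free poly-$\Z$ of Hirsch length $\leqslant 2$, or of the form $N \rtimes_\varphi \Z$ with $N$ free. In the first case every finite-index subgroup is free, so all the relevant $b_2$ vanish; in the second, every finite-index subgroup is a Poincaré duality group of dimension $\leqslant 2$ over $k'$, so its second Betti number is $\leqslant 1$ while its index is unbounded, and the infimum is again $0$. In the third case I would run the Lyndon--Hochschild--Serre spectral sequence for the index-$n$ subgroup $N \rtimes_{\varphi^n}\Z$: as $N$ is free it degenerates and gives $\H_2(N\rtimes_{\varphi^n}\Z;k') \cong \ker\!\bigl(\varphi^n_* - 1 \mid H_1(N;k')\bigr)$. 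The Alexander module $M = H_1(N;k')$ is a finitely generated module over the PID $k'[t^{\pm 1}]$ (this uses that $G_0$ is finitely presented), so writing its torsion part as $\bigoplus_j k'[t^{\pm1}]/(p_j)$ one gets $\dim_{k'}\ker(t^n - 1 \mid M) = \sum_j \deg\gcd(t^n - 1, p_j) \leqslant \sum_j \deg p_j$, a bound independent of $n$. Dividing by the index $n \to \infty$ then forces $\beta_2(G_0;k') = 0$, hence $\beta_2(G;k') = 0$, completing the argument.

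The main obstacle is precisely this last case, namely producing a quantitative, characteristic-uniform bound on the second Betti numbers of the finite-index subgroups: one cannot simply invoke a Lück-type approximation theorem to equate $\beta_2$ with $\btwo 2$ over a positive-characteristic field, because of the well-documented subtleties of mod-$p$ approximation along finite-index subgroups, and the point of the Alexander-module computation is to bypass these by exploiting the rigid two-dimensional structure forced by $\cd_\Z(G) \leqslant 2$. By contrast, the first half of the argument — the reduction to "$G$ is free-by-(poly-$\Z$ and virtually nilpotent)" and the observation that this property is insensitive to the ground field — is a direct application of \cref{cor:bZero_b2Zero}, \cref{cor:2dim_coherence}, and \cref{lem:removeFP2}.
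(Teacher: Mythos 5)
Your first half (from $\beta_2(G;k)=0$ to the structural conclusion that $G$ is free-by-(poly-$\Z$ and virtually nilpotent), and hence $\btwo 2(G;k')=0$ for all fields $k'$) is correct and matches the paper. The divergence is in how you upgrade $\btwo 2(G;k')=0$ back to $\beta_2(G;k')=0$. The paper does this by citing the relevant approximation theorems directly: since $G$ is finitely presented and of type $\FP(k')$, \cite[Theorem 1.2]{JaikinZapirain2020THEUO} together with \cite[Theorem 0.2]{LinnellLuckSauer_modpAmenableGrowth} yields $\beta_2(G;k')=\btwo 2(G;k')$. Your worry about mod-$p$ approximation is overcautious here precisely because the chain descends through amenable quotients, which is exactly the setting Linnell--L\"uck--Sauer handle.

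Your replacement argument contains a genuine gap in the case analysis. You assert that because $Q$ is a PD$_h$ group over $\Z$, one has $\cd_\Z(G_0)=\cd_\Z(N)+h$, and you use this to conclude $h\leqslant 2$, and $h\leqslant 1$ when $N\ne 1$. This additivity is not automatic: the standard equality $\cd(G)=\cd(N)+\cd(Q)$ requires finiteness conditions on $N$ (e.g.~$N$ of type $\FP$, or $\H^{\cd N}(N;\Z N)$ free abelian together with control of the $Q$-action), and your $N$ is typically an infinitely generated free group, which is not of type $\FP$. A concrete counterexample to the formula in your setting: take $G_0=\Sigma_2$, the genus-two surface group, and the surjection $\Sigma_2\to\Z^2$ killing two of the four generators. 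Its kernel $N$ is a nontrivial infinitely generated free group, the quotient $Q=\Z^2$ has Hirsch length $h=2$, yet $\cd_\Z(\Sigma_2)=2\ne 1+2$. So the dichotomy ``$N=1$, or $N\ne 1$ and $h\leqslant 1$'' does not follow from $\cd_\Z(G)\leqslant 2$, and your three cases may not be exhaustive. For RFRS groups one could repair this by invoking \cite[Theorem A]{Fisher_freebyZ} to get a \emph{virtual} free-by-$\Z$ structure, but for a general RPVN (e.g.~RTFN) group of the kind \cref{cor:2dim_coherence} produces, no such fibring theorem is available, so the reduction to the Alexander-module computation is not justified.
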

\begin{proof}
    Suppose \(\beta_2(G;k) = 0\) for some field \(k\). Then there is a finite-index subgroup \(H \leqslant G\) such that \(b_2(H;k) < [G:H]\). As in the proof of \cref{cor:bZero_b2Zero}, this implies that \(\btwo{2}(G;k) = 0\) and thus that \(G\) is free-by-(poly-\(\Z\)). But then \(\btwo{2}(G;k_0) = 0\) for all fields \(k_0\), by \cref{prop:divring_gaboriau}.
    
    Since \(G\) is finitely presented, this implies in particular that \(G\) is of type \(\FP(k)\). It then follows from \cite[Theorem 1.2]{JaikinZapirain2020THEUO} and \cite[Theorem 0.2]{LinnellLuckSauer_modpAmenableGrowth} that \(\beta_2(G;k_0) = \btwo{2}(G;k_0) = 0\) for all fields \(k_0\). \qedhere
\end{proof}

We can now prove \cref{cor:D}. Recall that a group \(G\) is \emph{parafree} if it is residually nilpotent and there is a free group \(F\) such that \(G/\gamma_n(G) \cong F/\gamma_n(F)\) for all \(n \in \N\). The \emph{Parafree Conjecture} for a finitely generated parafree group \(G\) predicts that \(\H_2(G;\Z) = 0\), and the \emph{Strong Parafree Conjecture} additionally predicts that \(\cd(G) \leqslant 2\). We thank Ismael Morales, who informed us that the vanishing of \(\btwo{2}(G)\) implies \(b_2(G) = 0\) for finitely generated parafree groups of cohomological dimension \(2\).

\begin{cor}
    Let \(G\) be a finitely generated parafree group of cohomological dimension \(2\). Then the Parafree Conjecture holds for \(G\) if and only if \(\gamma_n(G)\) is free for all sufficiently large \(n\). In particular, finitely generated parafree groups satisfying the Strong Parafree Conjecture are coherent.
\end{cor}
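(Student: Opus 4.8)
The plan is to read off the equivalence from Corollary~\ref{cor:RTFN}, using Corollaries~\ref{cor:2dim_coherence} and~\ref{cor:bZero_b2Zero} for the coherence and finite-presentation input, and the observation of Morales quoted above to pass between the ordinary Betti number $b_2$ and the $\Dk{G}$-Betti number $\btwo{2}$. I will use freely the following facts about a finitely generated parafree group $G$: it is residually torsion-free nilpotent (free nilpotent groups are torsion-free, so $\gamma_n(G)=\overline\gamma_n(G)$ for all $n$), hence RPVN; $\cd_k(G)\leq 2$ for every field $k$; its abelianisation is $\Z^r$ with $r\geq 2$ (else $\cd(G)\leq 1$), so $G/\gamma_n(G)\cong F/\gamma_n(F)$ is infinite for all $n\geq 1$; and $H_1(G;\Z)\cong\Z^r$ is torsion-free, so the universal coefficient sequence for $H_2(G;-)$ has no $\Tor$ term.

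\emph{Parafree Conjecture $\Rightarrow$ $\gamma_n(G)$ eventually free.} If $H_2(G;\Z)=0$ then $H_2(G;\Q)=0$ by flatness of $\Q$, so $b_2(G;\Q)=0$. By Corollary~\ref{cor:bZero_b2Zero}, $G$ is coherent, in particular finitely presented, and the proof of that corollary yields $\btwo{2}(G;\Q)=0$. As $G$ is finitely presented it is of type $\FP_2(\Q)$, hence of type $\FTP_2(\Q)$ since $\cd_\Q(G)\leq 2$. Corollary~\ref{cor:RTFN} at $d=2$ then gives $\cd_\Q(\gamma_n(G))<2$ for all sufficiently large $n$, and since $\gamma_n(G)$ is torsion-free, Swan's theorem shows that $\gamma_n(G)$ is free for all such $n$.

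\emph{Converse.} Suppose $\gamma_n(G)$ is free for all $n\geq n_0$ and fix such an $n$. Then $G$ is an extension of the poly-$\Z$ group $G/\gamma_n(G)\cong F/\gamma_n(F)$ by the free group $\gamma_n(G)$, so $G$ is free-by-(poly-$\Z$); by Corollary~\ref{cor:2dim_coherence}, $G$ is coherent, hence finitely presented, and in particular $H_2(G;\Z)$ is finitely generated. For each field $k$ one has $\cd_k(\gamma_n(G))\leq 1$ and $G/\gamma_n(G)$ is infinite amenable, so Proposition~\ref{prop:divring_gaboriau} gives $\btwo{2}(G;k)=0$; applying Morales's observation over each $k$ then gives $b_2(G;k)=0$, i.e.\ $H_2(G;k)=0$, for every field $k$. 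The universal coefficient theorem over $\Q$ and over each $\mathbb F_p$ (no $\Tor$ terms) shows $H_2(G;\Z)\otimes\Q=0$ and $H_2(G;\Z)\otimes\mathbb F_p=0$ for all $p$, whence $H_2(G;\Z)=0$ because it is finitely generated. This proves the Parafree Conjecture for $G$. For the final assertion, a group satisfying the Strong Parafree Conjecture has cohomological dimension at most $2$: if it is at most $1$ the group is free, and otherwise the $(\Rightarrow)$ argument above already exhibits it as coherent.

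The delicate point is the last part of the converse: deducing $H_2(G;\Z)=0$, rather than merely that $H_2(G;\Z)$ is finite, from the vanishing of $\btwo{2}(G;k)$. This is exactly where the parafree hypothesis is used beyond ``RTFN of cohomological dimension $2$'', through Morales's observation and its validity over each prime field; everything else is a formal combination of the cited results with Swan's theorem and the universal coefficient theorem.
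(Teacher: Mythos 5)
Your proof is correct and follows essentially the same strategy as the paper's; the only substantive differences are the following two. First, in the forward direction you explicitly route through Corollary~\ref{cor:bZero_b2Zero} to establish that $G$ is of type $\FP_2(\Q)$ before invoking Corollary~\ref{cor:RTFN}, which requires the hypothesis $\FTP_2(\Q)$; the paper's own proof applies Corollary~\ref{cor:RTFN} immediately after obtaining $\btwo{2}(G)=0$ without spelling out that the needed finiteness hypothesis is now available, so you handle this point more carefully. Second, for the converse you invoke Morales's observation ($\btwo{2}(G;k)=0 \Rightarrow b_2(G;k)=0$ for two-dimensional parafree $G$) as a black box over each field $k$ and then conclude via the universal coefficient theorem. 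The paper instead \emph{proves} this implication, by computing the Euler characteristic two ways: $\chi(G)=-\btwo{1}(G)=1-b_1(G;k)+b_2(G;k)$ together with the Bridson--Reid identity $\btwo{1}(G)=b_1(G;k)-1$ forces $b_2(G;k)=0$. Your argument is therefore less self-contained on this point, though perfectly defensible given the attribution in the text you had access to; if you wished to make the converse fully self-contained you would insert the same Euler-characteristic computation in place of the citation to Morales, which is exactly what the paper does.
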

\begin{proof}
    If the Parafree Conjecture holds, then \(b_2(G) = 0\), and therefore \(\btwo{2}(G) = 0\) by \cref{lem:removeFP2}. By \cref{cor:RTFN}, \(\gamma_n(G)\) is then free for all sufficiently large \(n\).
    
    Conversely, if some \(\gamma_n(G)\) is free, then \(G\) is free-by-(free nilpotent) and therefore is coherent by \cref{cor:2dim_coherence} and in particular is finitely presented. Moreover, \(\btwo{1}(G) = b_1(G;k) - 1\) by \cite[Corollary 8.1]{BridsonReid_BaumslagProblems} and \(\btwo{2}(G;k) = 0\) for all fields \(k\). Hence, the Euler characteristic of \(G\) is
    \[
        \chi(G) = -\btwo{1}(G) = 1 - b_1(G;k) + b_2(G;k)
    \]
    for every field \(k\), which implies \(b_2(G;k) = 0\) for all \(k\). Hence, \(H_2(G;\Z) = 0\), so \(G\) satisfies the Parafree Conjecture.
\end{proof}

\begin{rem}
    Note that we can also conclude that finitely generated parafree groups satisfying the Strong Parafree Conjecture are virtually free-by-cyclic, and therefore coherent by \cite{FeighnHandel_FreeByZCoherent}. Indeed, if \(\H_2(G;\Z) = 0\) for a finitely generated parafree group of cohomological dimension \(2\), then \(\btwo{2}(G) = 0\) by \cref{lem:removeFP2}. Parafree groups are RFRS by \cite[Theorem 9.2]{Reid_parafreeRFRS}, so \(G\) is virtually free-by-cyclic by \cite[Theorem A]{Fisher_freebyZ}.
\end{rem}


\section{Questions}

We conclude with two natural questions raised by the results of the article. The first pertains to the closure properties of the class of RPVN groups.

\begin{q}
    Let \(\mathcal C\) be a class of locally indicable and amenable groups. If \(G\) is finitely generated residually \(\mathcal C\) and of cohomological dimension \(2\), does \(\btwo{2}(G) = 0\) imply that \(G\) is free-by-\(C\) for some \(C \in \mathcal C\)?
\end{q}

An interesting test case for this question is the class \(\mathcal C\) of poly-\(\Z\) groups. 

The methods of this article unfortunately do not allow us to obtain a result on algebraic fibring, which was the original aim of Kielak's description of \(\mathcal D_{\Q G}\) for \(G\) RFRS. If \(G\) is RPVN and \(\btwo{1}(G) = 0\), then we can produce a virtual map to a nilpotent group \(H \rightarrow Q\) and an `open set' of multicharacters on \(Q\) such that the \(\H_1(H;\nov{\Q H}{\pm\chi}) = 0\). In order to obtain a fibring theorem, we would need to perturb the multicharacter (in some appropriate sense) to obtain a character coming from a map to \(\Z\) which still has vanishing Novikov homology. When \(Q\) is Abelian, this is easy since open sets of characters always contain rational characters, but for \(Q\) nilpotent it is not clear to us what the correct notions of openness and perturbation are (or if they exist). The second author has developed a version of \(\Sigma\)-theory for groups mapping onto nilpotent quotients and formulated a criterion for the kernel of a  map onto a nilpotent group to be finitely generated \cite[Theorem A]{Klinge_nilpotentSigma}. One possibility would be to establish a connection between the vanishing of Novikov homology and these nilpotent quotient \(\Sigma\)-invariants. We conclude with the following question.

\begin{q}
    Let \(G\) be a finitely generated RPVN group with \(\btwo{1}(G) = 0\). Does \(G\) virtually algebraically fibre?
\end{q}

\bibliography{bib}
\bibliographystyle{alpha}

\end{document}